\newcommand\II{\mathrm{I}\hskip-.3mm\mathrm{I}}
\newcommand\hide[1]{}
\newcommand\red[1]{\textcolor{red}{#1}}  
\renewcommand\sout{\bgroup\markoverwith
{\textcolor{red}{\rule[0.7ex]{3pt}{1.4pt}}}\ULon}
\newcommand{\doublewidetilde}[1]{{%
  \mathpalette\double@widetilde{#1}%
}}
\newcommand{\double@widetilde}[2]{%
  \sbox\z@{$\m@th#1\widetilde{#2}$}%
  \ht\z@=.9\ht\z@
  \widetilde{\box\z@}%
}
\newcommand{\dist}{\operatorname{dist}}
\newcommand\seq{\, = \,}
\newcommand\define{\mathrel{\ := \ }}
\newcommand\ede{{\define}}
\newcommand\manifb{M_0}
\newcommand{\CC}{\mathbb C}
\newcommand{\NN}{\mathbb N}
\newcommand{\RR}{\mathbb R}
\newcommand{\ZZ}{\mathbb Z}
\newcommand{\CI}{{\mathcal C}^{\infty}}
\newcommand{\CIc}{{\mathcal C}^{\infty}_{\text{c}}}
\newcommand\pa{\partial}
\newcommand\oN{\NN \cup \{\infty\}}
\newcommand\Mce{\widehat{M}}
\newcommand{\maA}{\mathcal A}
\newcommand{\maB}{\mathcal B}
\newcommand{\maC}{\mathcal C}
\newcommand{\maK}{\mathcal K}
\newcommand{\maU}{\mathcal U}
\newcommand{\maV}{\mathcal V}
\newcommand{\indexSet}{\mathfrak{I}}
\newcommand{\rinj}{\mathop{r_{\mathrm{inj}}}}
\newcommand\ball[2]{B_{#2}(#1)}
\newtheorem{theorem}{Theorem}[section]
\newtheorem{proposition}[theorem]{Proposition}
\newtheorem{corollary}[theorem]{Corollary}
\newtheorem{lemma}[theorem]{Lemma}
\theoremstyle{definition}
\newtheorem{assumption}[theorem]{Assumption}
\newtheorem{definition}[theorem]{Definition}
\newtheorem{notation}[theorem]{Notation}
\newtheorem{remark}[theorem]{Remark}
\newtheorem{example}[theorem]{Example}
\newtheorem{examples}[theorem]{Examples}
\author[B. Daniel]{Beno\^it Daniel} 
\address{Universit\'{e} de Lorraine, CNRS, IECL, F-54000 Nancy, France}
\email{benoit.daniel@univ-lorraine.fr}
\author[S. Labrunie]{Simon Labrunie} 
\address{Universit\'{e} de Lorraine, CNRS, IECL, F-54000 Nancy, France} 
\email{simon.labrunie@univ-lorraine.fr}
\author[V. Nistor]{Victor Nistor} \address{Universit\'{e} 
de Lorraine, CNRS, IECL, F-57000 Metz, France}
\email{victor.nistor@univ-lorraine.fr}
\thanks{V.N. has been partially supported by ANR grant 
OpART ANR-23-CE40-0016.}
\subjclass{35R01, 35J15, 46E35, 58J32}
\keywords{Strongly elliptic operators, Poincar\'e inequality, 
Polygonal domain, Babu\v{s}ka--Kondratiev spaces, Sobolev spaces, 
Manifolds with bounded geometry}
\begin{document}

\title[Uniform estimates for a family]
{Uniform estimates for a family of Poisson problems: 
`rounding off the corners'}

\begin{abstract}
   We prove \emph{uniform solvability estimates} for certain 
   families of elliptic problems posed in a bounded family of 
   domains (for example, a sequence that converges to another 
   domain). We provide uniform estimates both in weighted and 
   in usual Sobolev spaces. When the limit domain is a \emph{polygon} 
   and the other domains are smooth, our results amount to 
   ``rounding off'' the corners of the limit domain. The technique of 
   proof is based on a suitable conformal modification of the metric, 
   which makes the union of the domains a manifold with boundary and 
   relative bounded geometry.
\end{abstract}

\maketitle

\tableofcontents

\section{Introduction}


We prove \emph{uniform solvability estimates} for certain families of
elliptic problems posed in a family of domains that is bounded in 
a certain sense. The two dimensional case and the higher dimensional 
case are somewhat different, so in this paper we include the results 
that are common to the two cases, as well as some of the results that 
are specific to the two dimensional case.

\subsection{Informal statement of the main result}
\label{ssec.statement}
While most of our results hold in all dimensions and for
suitably bounded families of domains, the most complete 
and easiest to state form of our main results is in two dimensions
for a convergent sequence of domains, 
so we first formulate informally our main results in this case. 
Thus let $$\Omega_{n} \subset \RR^{2}$$ 
be sequence of \emph{bounded, Lipschitz domains} in the plane, 
$n \in \NN := \{1, 2, \ldots\}$. We assume 
that this sequence ``converges'' in a certain sense as $n \to \infty$ 
to a domain $\Omega_\infty$ and we study to what extent we can 
obtain \emph{uniform} estimates for the solutions of the 
Poisson problems
\begin{equation}\label{eq.Poisson}
   \begin{cases}
      \ \Delta u_{n} \seq  f_{n} & \text{ in }\ \Omega_{n} \\ 
      \ u_{n} \seq 0 & \text{ on }\,  \partial  \Omega_{n}\,, \ \ 
      n \in \oN \,.
   \end{cases}
\end{equation} 

Let us assume that the limit 
domain $\Omega_{\infty} \subset \RR^{2}$ is a polygonal domain. 
We prove that, under suitable bounded geometry assumptions on the 
domains $\Omega_{n}$, $n \in \oN$, there exists $\delta_{U} > 0$ 
such that, for any $\delta < \delta_{U}$, there exists $C_{\delta} > 0$ 
such that, for any $n \in \oN$ and any $f_{n} \in L^2(\Omega_{n})$, 
the unique solution $u_{n} \in H^1(\Omega_{n})$ to the Poisson 
problem \eqref{eq.Poisson} satisfies the estimate
\begin{equation}\label{eq.needed.estimate}
   \|u_{n}\|_{H^{1 + \delta}(\Omega_{n})} 
   \le C_{\delta} \|f_{n}\|_{L^2(\Omega_{n})}\,.
\end{equation}
(See Theorem \ref{theorem.main}.) Our results remain true if the family 
$\Omega_{n}$, $n \in \NN$, is only bounded, in a certain sense (thus, not
necessarily convergent). Also, as mentioned above, most of our results 
are for domains $\Omega_{n}$ of $\RR^{d}$, $d \ge 1$.

\subsection{Weighted spaces and the setting}
\label{ssec.setting}
We now outline the setting, the assumptions, and the method 
of proof for our main result, Theorem \ref{theorem.main}, as 
well as the statements of some intermediate results.

\subsubsection{Weighted spaces} 
Most of our intermediate results are usually formulated in the  
Babu\v{s}ka-Kondratiev spaces and are interesing on their own.

\begin{definition}\label{def.BKspaces} 
   Let $G \subset \RR^{d}$ be an open subset and 
   $f : G \to (0, \infty)$ be a measurable 
   function. Then the 
   \emph{Babu\v{s}ka-Kondratiev space associated to $m \in \ZZ_+$,
   $a \in \RR$, the domain $G$, and the weight $f$} is the space 
   \begin{equation*}
      \maK_{a}^m (G; f) \ede \{\, u : G \to \CC \mid 
      f^{|\alpha| - a} \pa^\alpha u \in L^2(G; dx^2)\,, 
      \ |\alpha| \le m\, \}\,.
   \end{equation*}
\end{definition}

The Babu\v{s}ka-Kondratiev spaces are also called \emph{weighted
Sobolev spaces.} The Babu\v{s}ka-Kondratiev spaces are 
(essentially) the usual Sobolev spaces associated to some metrics
conformally equivalent to the Euclidean one. If $G$ is a 
manifold with boundary and relative bounded geometry 
in the metric $f^{-2}dx^2$ (see Section 
\ref{sec.bg}), then $\maK_{d/2}^m (G; f) = H^{m}(G; f^{-2}dx^2)$,
the usual Sobolev space on $G$ defined with respect to the 
metric $f^{-2}dx^2$, provided that 
$df/f \in W^{\infty, \infty}(G; f^{-2}dx^2)$ \cite{AGN_CR}
(a particular case of this result is proved in our paper,
for completeness). 
This fact will be important in our proofs.

\subsubsection{Summary of main notations and assumptions}
We next introduce in more detail our setting. We also formulate 
some of the assumptions needed \emph{for our main result.} 
All our domains will be \emph{open, bounded sets.} 
We shall let $\dist_{M}(x, y)$ denote the geodesic distance 
between two points $x, y$ of a Riemannian manifold $M$.

\begin{assumption}\label{assumpt.intro1}
   We are given an index set $\indexSet$, $N \in \NN$,
   and $1 \ge  R > 0$ such that the following will
   be valid throughout the paper:
   \begin{enumerate}[(i)]
      \item \label{item.intro.as12}
      For each $n \in \indexSet$, 
      $\maV_{n} = \{p_{1n}, p_{2n}, \ldots, p_{Nn}\} 
      \subset \RR^{d} \smallsetminus \Omega_{n}$ is a subset with 
      $N$ elements.
      
      \item\label{item.intro.as3} 
      For all $n \in \indexSet$ and all $1 \le i < j \le N$, we have
      $|p_{in} - p_{jn}| \ge R$.
      
      \item\label{item.intro.as4}  
      $\eta : [0,\infty) \to [0, R/6]$ 
      a fixed, smooth, non-decreasing function such that
      \begin{itemize}
         \item $\eta(t) = t$, for $t \le R/8$ and  
         \item $\eta(t) = R/6$, for $t \ge R/5$. 
      \end{itemize}
       
      \item $\hat g_{n} := r_{n}^{-2}dx^2$, where $r_{n}(x) \ede 
      \eta(|x - \maV_{n}|) \ede \eta( \dist_{\RR^{d}}(x, \maV_{n})).$
   \end{enumerate}
\end{assumption}

The function $r_{n}$ is thus a smooth function ``equivalent'' to 
$|x - \maV_{n}| := \dist_{\RR^{d}}(x, \maV_{n})$, the distance function 
to $\maV_{n}$ in $\RR^{n}$. However, in view of \eqref{item.intro.as3} 
above, $r_{n}$ is \emph{smooth outside} $\maV_{n}$ (unlike the function 
$|x - \maV_{n}|$). For simplicity, we shall assume also that $\indexSet$ is 
countable or finite (i.e. at most countable). 

Recall \cite{AGN1} that a manifold with \emph{finite width} is a manifold with 
boundary and relative bounded geometry such that the distance to the boundary 
is bounded globally (Definition \ref{def.finite.width}). These concepts are 
recalled in detail in Section \ref{sec.bg}. We next introduce the most significant 
assumption on the sets $\Omega_{n}$ and $\maV_{n}$, $n \in \indexSet$.

\begin{assumption}\label{assumpt.fw}
   For each $n \in \indexSet$, $\Omega_{n} \subset \RR^{d}$ 
   is a bounded open subset (a \emph{domain}) such 
   that $\Omega_{n} \cap \maV_{n} = \emptyset$
   and $\cup_{n \in \indexSet} (\Omega_{n} \cup \maV_{n})$
   is a bounded subset of $\RR^{d}$. Let 
   \begin{equation*}
      U \ede \cup_{n \in \indexSet} \{n\} \times 
   \Omega_{n} \subset \indexSet \times \RR^{d},
   \end{equation*} 
   with the metric 
   $\hat g$ equal to $\hat g_{n} := r_{n}^{-2}dx^2$ on 
   $\{n\} \times \Omega_{n} \equiv \Omega_{n} \subset \RR^{d}$,
   as in Assumption \ref{assumpt.intro1} above. We assume then that 
   $(U, \hat g)$ is a manifold with finite width (Definition \ref{def.finite.width}).
\end{assumption}

In addition to the metric $\hat g$, we endow 
$\indexSet \times \RR^{d}$ with the flat metric $dx^{2}$. 
Let $r = r_{n}$ on $\{n\} \times \RR^{d}$, then $\hat g = r^{-2} dx^{2}$. 
Since the domains $\Omega_{n} \subset \RR^{d}$ are such that 
$\maV_{n} \cap \Omega_{n} = \emptyset$, we can define 
$\maK_{a}^m(U; r)$ analogously to Definition \ref{def.BKspaces} 
(see Remark \eqref{rem.BKspaces}) and we have 
\begin{equation}\label{eq.H=K}
   \maK_{d/2}^m(U; r) \seq H^{m}(U; \hat g) \simeq 
   \oplus_{n \in \indexSet} H^{m}(\Omega_{n}; \hat g_{n})
   \simeq \oplus_{n \in \indexSet} 
   \maK_{d/2}^m(\Omega_{n}; r_{n})\,,
\end{equation} 
where $H^{m}(U; \hat g)$ is the usual Sobolev 
spaces associated to the metric $\hat g$. The method of 
proof will be to reduce the desired isomorphisms to 
isomorphisms between the spaces $H^{m}(U; \hat g)$ for
various $m$.

\subsection{Statement of the main result}
We shall assume in this subsection that $d = 2$ and hence consider 
the \emph{Laplacian} $\Delta = \pa_1^2 + \pa_2^2$ on
various spaces. It will always be defined on functions vanishing at 
the boundary, that is, we consider the \emph{Dirichlet Laplacian} $\Delta_D$.
In addition to this ``$D$,'' we may decorate the Laplacian $\Delta$ with some 
additional indices that will specify its domain (as in the next theorem).

\begin{theorem}\label{thm.wellp1}
   We assume \ref{assumpt.intro1} and \ref{assumpt.fw}.
   In particular, $r = r_{n} := \eta( _{\RR^{d}}(x, \maV_{n}))$ and 
   $\hat g = r_{n}^{-2}dx^2$ on $\Omega_{n}
   \equiv \{n\} \times \Omega_{n}$ and 
   $U := \cup_{n \in \indexSet} \{n\} \times \Omega_{n}.$
   We also assume that $(U, \hat g)$ 
   has \emph{finite width} (Definition \ref{def.finite.width}). 
   Then, there exists 
   $\delta_{U} > 0$ such that, for every $|a| < \delta_{U}$ and every
   $m \in \ZZ_+ := \{0, 1, \ldots\}$, the Dirichlet Laplacian 
   $\Delta_D$ induces an isomorphism
   \begin{equation*}
      \Delta_{D, m, a} : \maK_{a+1}^{m+1}(U; r) 
      \cap \{u\vert_{\pa U} = 0\}
      \to \maK_{a-1}^{m-1}(U; r)\,.
   \end{equation*}
\end{theorem}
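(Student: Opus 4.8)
The plan is to reduce the weighted estimate to a statement about the usual Sobolev spaces on the manifold with bounded geometry $(U,\hat g)$, and then to invoke the general elliptic regularity theory on manifolds with finite width. First I would observe, using the identification \eqref{eq.H=K} and the fact recalled after Definition \ref{def.BKspaces}, that $\maK^{m}_{d/2}(U;r) = H^{m}(U;\hat g)$, provided $dr/r \in W^{\infty,\infty}(U;\hat g)$; this last condition follows from Assumption \ref{assumpt.intro1}, since $r_{n} = \eta(\dist_{\RR^{d}}(\cdot,\maV_{n}))$ with $\eta$ fixed and smooth, so that the logarithmic derivative of $r$ is uniformly bounded with all its $\hat g$-covariant derivatives, uniformly in $n$ (this is where the uniform lower bound $R$ on the mutual distances of the points $p_{in}$ and the shape of $\eta$ enter). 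Since $d = 2$, $\maK^{m}_{1}(U;r) = H^{m}(U;\hat g)$, so the case $a = 0$ of the theorem is exactly the statement that the Dirichlet Laplacian
\[
   \Delta_{D} : H^{m+1}(U;\hat g)\cap\{u|_{\pa U}=0\}\to H^{m-1}(U;\hat g)
\]
is an isomorphism for all $m$. This is the expected main input: on a manifold with boundary and \emph{relative bounded geometry} whose distance to the boundary is globally bounded (finite width), the scalar Dirichlet Laplacian is positive, bounded below by a positive constant (a uniform Poincaré inequality — this uses finite width crucially), and satisfies uniform interior and boundary elliptic regularity; together these give the isomorphism between the Sobolev scales. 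I would cite \cite{AGN1} (and \cite{AGN_CR}) for this, or reprove the $H^{1}\to H^{-1}$ isomorphism via Lax--Milgram plus the Poincaré inequality of finite width, and then bootstrap regularity.

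Next I would handle general $|a|<\delta_{U}$ by the standard conjugation trick. Multiplication by $r^{a}$ is an isomorphism $\maK^{m}_{b}(U;r)\to\maK^{m}_{b-a}(U;r)$ for every $b,m$ (a direct check on the defining norms, using again that $dr/r$ and its higher $\hat g$-derivatives are bounded). Hence $\Delta_{D,m,a}$ is an isomorphism if and only if the conjugated operator
\[
   P_{a} \ede r^{a}\,\Delta\, r^{-a} \, : \, H^{m+1}(U;\hat g)\cap\{u|_{\pa U}=0\}\to H^{m-1}(U;\hat g)
\]
is an isomorphism. A computation (carried out once, uniformly in $n$, because the coefficients are built from $r$, $dr/r$, and the flat metric) shows $P_{a} = \Delta + a\,(\text{first order})+ a^{2}\,(\text{zeroth order})$, where the lower-order coefficients are bounded in $W^{\infty,\infty}(U;\hat g)$ uniformly in $a$ on bounded $a$-intervals. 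Thus $P_{a}-\Delta_{D}$ has small operator norm $H^{m+1}\to H^{m-1}$ when $|a|$ is small, and since $\Delta_{D}$ is invertible with a norm bound on its inverse that does not depend on $a$, a Neumann-series / perturbation argument shows $P_{a}$ stays invertible for $|a|<\delta_{U}$, with $\delta_{U}>0$ depending only on the bounded-geometry data and not on $n$.

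The main obstacle, and the place where the hypotheses really do work, is establishing the base case $a=0$ \emph{uniformly}: one needs the Dirichlet Laplacian on $(U,\hat g)$ to be bounded below and to satisfy uniform elliptic estimates, and both of these rest on $(U,\hat g)$ being a manifold with boundary and relative bounded geometry of finite width — precisely Assumption \ref{assumpt.fw}. Concretely, the Poincaré inequality on a finite-width manifold (so that $\Delta_{D}\ge c>0$ and $\Delta_{D}:H^{1}_{0}\to H^{-1}$ is an isomorphism) is the crux; the higher-regularity step is then the routine bounded-geometry parametrix patching, and the passage from $\hat g$-Sobolev spaces back to Babuška--Kondratiev spaces is the bookkeeping already recorded in \eqref{eq.H=K}. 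I would therefore organize the proof as: (1) $dr/r\in W^{\infty,\infty}(U;\hat g)$ and the identification $\maK^{m}_{1}(U;r)=H^{m}(U;\hat g)$; (2) invertibility of $\Delta_{D}$ on the $\hat g$-Sobolev scale via Poincaré (finite width) and regularity (bounded geometry); (3) the $r^{a}$-conjugation and perturbation argument to extend to $|a|<\delta_{U}$.
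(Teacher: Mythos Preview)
Your plan matches the paper's proof almost exactly: establish the $a=0$ case via the well-posedness of the Dirichlet Laplacian on finite-width manifolds (Theorem \ref{thm.well.posedness}) combined with the identification $\maK_{1}^{m}(U;r)=H^{m}(U;\hat g)$ (Lemma \ref{lemma.K=H}), and then perturb via the $r^{a}$-conjugation. Two points, however, need tightening.

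First, in the base case you write that ``the case $a=0$ is exactly the statement that $\Delta_{D}:H^{m+1}(U;\hat g)\to H^{m-1}(U;\hat g)$ is an isomorphism.'' This is not quite right: the Euclidean Laplacian $\Delta$ sends $\maK_{1}^{m+1}=H^{m+1}(U;\hat g)$ to $\maK_{-1}^{m-1}$, not to $\maK_{1}^{m-1}=H^{m-1}(U;\hat g)$. What the finite-width theorem provides is invertibility of the \emph{metric} Laplacian $\Delta_{\hat g}$ between the $H^{m}(U;\hat g)$ spaces; to connect this to the Euclidean $\Delta$ you must use the two-dimensional conformal relation $\Delta_{\hat g}=r^{2}\Delta$ together with the isomorphism $r^{2}:\maK_{-1}^{m-1}\to\maK_{1}^{m-1}$ (Lemma \ref{lemma.shift.K}). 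The paper makes this step explicit (Theorem \ref{thm.global.isomorphism}); it is easy but essential, and it is precisely the place where $d=2$ is used.

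Second, your Neumann-series argument produces a threshold $\delta_{U,m}$ that a priori depends on $m$, since the norm of $\Delta_{D,m,0}^{-1}$ and the operator bound on the perturbation both vary with the Sobolev index. The theorem asserts a \emph{single} $\delta_{U}$ valid for all $m$. The paper handles this (end of the proof of Theorem \ref{thm.etaU}) by taking $\delta_{U}:=\delta_{U,0}$ and then, for $|a|<\delta_{U,0}$ and $f\in\maK_{a-1}^{m-1}$, using the $m=0$ invertibility to get $u\in\maK_{a+1}^{1}$ with $\Delta u=f$, followed by an elliptic-regularity bootstrap (or the explicit form of $r^{-a}\Delta r^{a}-\Delta$) to promote $u$ to $\maK_{a+1}^{m+1}$. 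You should add this step.
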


This is a consequence of the well-posedness of the Laplacian on 
manifolds with finite width \cite{AGN1}, where the concept of 
``finite width'' was introduced. This result and the definiton 
of manifolds with finite width is recalled in Section \ref{sec.bg}.
When a single domain is considered (i.e., $\indexSet$ is reduced to 
a single element, the result of the last theorem is a classical theorem
due to Kondratiev \cite{Kondratiev67, KMR}).
Recall that if the domain $G$ is bounded, then 
$\Delta_D : H^1_0(G) \to H^{-1}(G)$ is an isomorphism.
An interpolation argument and the direct sum decomposition 
in \eqref{eq.H=K} then give the following consequence.

\begin{corollary}\label{cor.unif1}
   Under the assumptions of Theorem \ref{thm.wellp1}, for any 
   $\delta < \delta_{U}$, there exists a constant $C_\delta> 0$
   such that Equation \eqref{eq.needed.estimate} holds, that is,
   for any $n \in \indexSet$ and any $f_{n} \in L^2(\Omega_{n})$,
   we have
   \begin{equation*}
      \|\Delta_D^{-1}f_{n}\|_{H^{1 + \delta}(\Omega_{n})} 
      \le C_{\delta} \|f_{n}\|_{L^2(\Omega_{n})}\,.
   \end{equation*}
\end{corollary}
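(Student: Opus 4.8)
The plan is to deduce the corollary from Theorem \ref{thm.wellp1} by interpolation, exploiting the fact that the weighted spaces $\maK_{a}^{m}(U;r)$ on the manifold $(U,\hat g)$ coincide with the usual Sobolev spaces on each piece $\Omega_{n}$ when the weight index is ``neutral'' ($a = d/2 = 1$ here, since $d=2$), via the identification \eqref{eq.H=K}. First I would record the two endpoint statements. At the ``untilted'' end $a = 0$: Theorem \ref{thm.wellp1} with $m = 0$ gives that $\Delta_{D,0,0} : \maK_{1}^{1}(U;r)\cap\{u|_{\pa U}=0\}\to\maK_{-1}^{-1}(U;r)$ is an isomorphism, with inverse bounded by a constant depending only on $U$ (not on $n$). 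Unwinding \eqref{eq.H=K} and the direct-sum decomposition, $\maK_{1}^{1}(U;r) = H^{1}(U;\hat g) = \oplus_{n} H^{1}(\Omega_{n};\hat g_{n})$, and $\maK_{1}^{1}\cap\{u|_{\pa U}=0\} = H^{1}_{0}(U;\hat g) = \oplus_n H^1_0(\Omega_n;\hat g_n)$, while $\maK_{-1}^{-1}(U;r)$ is the corresponding dual $H^{-1}$. Because each $\hat g_{n}$ is conformal to the Euclidean metric with a weight $r_{n}$ that is bounded above and below by fixed positive constants on the bounded set $\cup_{n}(\Omega_{n}\cup\maV_{n})$ \emph{away from} $\maV_{n}$ — and $r_{n}$ is comparable to $\dist_{\RR^d}(\cdot,\maV_n)$ — the spaces $H^{1}_{0}(\Omega_n;\hat g_n)$ and $H^{-1}(\Omega_n;\hat g_n)$ are isomorphic to $H^1_0(\Omega_n)$ and $H^{-1}(\Omega_n)$ \emph{in the flat metric} with norm equivalences that are uniform in $n$ (this is where one uses that $\Omega_n$ stays inside a fixed bounded region so the conformal factor volume distortion is controlled). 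So the $a=0$, $m=0$ case of Theorem \ref{thm.wellp1} already recovers the uniform bound $\|\Delta_D^{-1}f_n\|_{H^1_0(\Omega_n)}\le C\|f_n\|_{H^{-1}(\Omega_n)}\le C\|f_n\|_{L^2(\Omega_n)}$.

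Next I would take the ``tilted'' end: pick any $a$ with $0 < a < \delta_{U}$ and apply Theorem \ref{thm.wellp1} with $m = 0$ to get that $\Delta_{D,0,a} : \maK_{a+1}^{1}(U;r)\cap\{u|_{\pa U}=0\}\to\maK_{a-1}^{-1}(U;r)$ is an isomorphism, again with $n$-independent norm control. The point of admitting a positive weight index is that $\maK_{a+1}^{1}(U;r)$ is \emph{strictly smaller} than $H^1$ near $\maV_n$ (functions must decay like $r^{a}$-times-better there), and near the corners of the polygonal limit domain this is exactly the extra regularity that lets one pass to fractional Sobolev exponents above $1$. Then I would interpolate between the two isomorphisms $\Delta_{D,0,0}$ and $\Delta_{D,0,a}$: complex (or real) interpolation of the scale $\maK_{a+1}^1(U;r)\cap\{u|_{\pa U}=0\}$ against itself at parameters $a=0$ and $a$ produces, at interpolation parameter $\theta\in(0,1)$, an isomorphism on an intermediate space, and simultaneously interpolating the target spaces $\maK_{-1}^{-1}$ and $\maK_{a-1}^{-1}$ (both of which sit between, and are controlled by, $L^2$ on each $\Omega_n$ uniformly). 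On the $\Omega_n$-side this interpolation, combined with the standard identification of complex interpolation spaces between $H^1_0$ and a weighted-$H^1$ with a fractional Babuška–Kondratiev / Sobolev space, yields $\|u_n\|_{H^{1+\delta}(\Omega_n)}\le C_\delta\|f_n\|_{L^2(\Omega_n)}$ for $\delta = \theta\cdot(\text{something})$; choosing $a$ close to $\delta_U$ and $\theta$ close to $1$ lets $\delta$ range over all of $(0,\delta_U)$. The $n$-independence is automatic because all interpolation constants depend only on the two endpoint operator norms, which Theorem \ref{thm.wellp1} furnishes uniformly, and on the uniform norm-equivalence constants between the $\hat g_n$- and flat-metric Sobolev spaces.

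The main obstacle I anticipate is the bookkeeping at the level of a \emph{single} domain: one must show that the interpolation space $[\,H^{1}_{0}(\Omega_n;\hat g_n),\ \maK^{1}_{a+1}(\Omega_n;r_n)\cap\{u|_{\pa\Omega_n}=0}\,]_{\theta}$ embeds into $H^{1+\delta}(\Omega_n)$ (flat) with a constant that does not depend on $n$, and that the boundary condition behaves well under interpolation. The weighted-to-flat comparison is clean away from $\maV_n$, but the $\maK$-spaces carry a weight that degenerates exactly at $\maV_n\subset\RR^d\smallsetminus\Omega_n$; one needs that $\Omega_n$ stays a definite distance from the ``bad'' points, or rather that the relative-bounded-geometry / finite-width hypothesis on $(U,\hat g)$ supplies precisely the uniform control (uniform injectivity radius away from the boundary, uniform collar, etc.) needed to make the local Sobolev embeddings and trace theorems uniform. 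Once that uniformity is extracted from the finite-width hypothesis — essentially a consequence of the cited fact that $\maK_{d/2}^m(U;r) = H^m(U;\hat g)$ together with $df/f\in W^{\infty,\infty}$ — the interpolation argument is routine, and the direct-sum decomposition \eqref{eq.H=K} lets one transfer the global statement on $U$ to the claimed uniform family of estimates on the $\Omega_n$.
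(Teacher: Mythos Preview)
Your plan has a genuine gap: both of your endpoint isomorphisms come from Theorem~\ref{thm.wellp1} with $m=0$, so the domain spaces are $\maK_{1}^{1}(U;r)\cap\{u|_{\pa U}=0\}$ and $\maK_{a+1}^{1}(U;r)\cap\{u|_{\pa U}=0\}$. These are both \emph{order-one} weighted spaces; each controls only $u$ and $\nabla u$ (with different weights). Complex interpolation between them gives, by Lemma~\ref{lemma.interpolation}, another order-one weighted space $\maK_{1+\theta a}^{1}$, not anything with $1+\delta$ derivatives. There is no ``standard identification of complex interpolation spaces between $H^{1}_{0}$ and a weighted-$H^{1}$ with a fractional Babu\v{s}ka--Kondratiev / Sobolev space'' that produces $H^{1+\delta}$: tilting the weight index $a$ does not create extra differentiability. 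So your argument never leaves the $H^{1}$ scale and cannot conclude the $H^{1+\delta}$ bound.

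The paper's route is different and avoids this. It applies Theorem~\ref{thm.wellp1} (in the sharpened form of Theorem~\ref{thm.etaU}) with $m=1$, obtaining the uniform isomorphism
\[
\Delta_{D}:\ \maK_{a+1}^{2}(\Omega_{n};r_{n})\cap\{u|_{\pa\Omega_{n}}=0\}\ \longrightarrow\ \maK_{a-1}^{0}(\Omega_{n};r_{n}),
\]
which already controls two derivatives on the left. The passage to flat Sobolev spaces is then purely a matter of embeddings driven by $r_{n}\le 1$: on the right, $L^{2}(\Omega_{n})=\maK_{0}^{0}\hookrightarrow\maK_{a-1}^{0}$ is a contraction for $a\le 1$ (Lemma~\ref{lemma.contraction}); on the left, interpolating the contractions $\maK_{1}^{2}\hookrightarrow H^{1}$ and $\maK_{2}^{2}\hookrightarrow H^{2}$ yields a uniform contraction $\maK_{1+a}^{2}\hookrightarrow H^{1+a}$ (Corollary~\ref{cor.contraction}). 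Composing gives $\|\Delta_{D}^{-1}f_{n}\|_{H^{1+a}}\le\|\Delta_{D}^{-1}f_{n}\|_{\maK_{1+a}^{2}}\le C_{K}\|f_{n}\|_{\maK_{a-1}^{0}}\le C_{K}\|f_{n}\|_{L^{2}}$. The interpolation is thus used only at the level of the embeddings (with the Sobolev order $m=2$ held fixed), not at the level of the solution operator; the extra derivative comes from invoking $m=1$ in Theorem~\ref{thm.wellp1}, which is precisely what your plan omits.
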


In Section \ref{sec.Benoit} we provide conditions on the sequences 
$(\Omega_{n}, \maV_{n})$ such that the conditions of Theorem \ref{thm.wellp1}
and its corollary, Corollary \ref{cor.unif1}, are satisfied (most notably, 
such that $(U, \hat g)$ be a manifold with boundary 
and relative bounded geometry). We also provide a concrete, explicit 
construction of such a sequence for any given curvilinear polygonal
domain $\Omega_{\infty}$. 

We allow the case $N = 0$, that is, the case when the sets 
$\maV_{n}$ are empty. Then the functions $r_{n}$ will be constant and 
the Babu\v{s}ka-Kondratiev spaces we are useing become the usual 
Sobolev spaces. This is the case, for instance, when the limit 
domain $\Omega_{\infty}$ is actually smooth (no corners). Even 
in that case, our results are new. See 
\cite{Olaf21, HenrotBook, Olaf22} for other types of boundedness
results for bounded (or convergent) sequences of domains.

\subsection{Contents of the paper} In Section \ref{sec.bg}, we
recall some basic material on manifolds with boundary and 
relative bounded geometry and we include a few direct applications 
of the results in those papers. In Section \ref{sec.ue}, 
we first recall some properties of the Babu\v{s}ka-Kondratiev
spaces. We then assume $d = 2$ and prove various uniform 
estimates under the assumption that $U$ has finite width. First
we prove these estimates in the framework of the 
Babu\v{s}ka-Kondratiev spaces (including Theorem \ref{thm.wellp1}).
Then we use the estimates in these spaces to obtain the uniform 
estimates in the usual Sobolev spaces (including our main 
Theorem, \ref{theorem.main}). Finally, in Section 
\ref{sec.Benoit}, given a straight polygonal domain $\Omega_{\infty}$,
we construct some completely explicit smooth,
bounded domains $\Omega_{n}$ such that the 
resulting manifold with boundary $(U, \hat g) := 
\cup (\Omega_{n}, \hat g_{n})$ is of relative bounded geometry. 
We also check that these sequences satisfy all the conditions of Assumptions 
\ref{assumpt.intro1} and \ref{assumpt.fw} are satisfied, so our results apply 
to these explicit approximating sequences of domains.
Our work generalizes the results obtained 
in~\cite{CiarletKaddouri,KaddouriThesis} in 
the case of a very specific rounding procedure. Rounding 
off corners has been considered in another setting in 
\cite{Brendle1, Brendle2} (we thank Bernd Ammann for 
bringing these papers to our attention).

\section{Background material}
\label{sec.background}

Our results rely on estimates on manifolds with bounded geometry,
so we first recall some basic material on manifolds with boundary and
relative bounded geometry, mostly from \cite{AGN1}, 
to which we refer for more
details and references. The presentation, however, is closer 
to \cite{GN17}. We also include a few direct applications of the 
results in those papers. In this section, we work on manifolds of
arbitrary dimension.

\subsection{Manifolds with bounded geometry}
\label{sec.bg}
Throughout this paper, $(M, g)$ will be a 
\emph{smooth Riemannian manifold} 
(without boundary) of dimension $d$, unless specified otherwise.
where, we recall, $g$ is a smoothly varying inner product on 
the tangent spaces $T_pM$, $p \in M$. If $E \to M$ is a smooth vector 
bundle, we let $\CI(M; E)$ be the space of smooth sections of $E$.
A \emph{connection} on $E$ is a differential operator 
\begin{equation}
   \begin{gathered}
      \nabla^E : \CI(M; E) \to \CI(M; T^*M \otimes E) \quad \mbox{satisfying}\\
      \nabla^E(f \xi) \seq df \otimes \xi + f \nabla^E \xi\,.
   \end{gathered}
\end{equation}
We also write $\nabla_X \xi := \langle X, \nabla \xi \rangle \in \CI(M; E)$
if $X \in \CI(M; TM)$, where the pairing $\langle \, , \, \rangle$ 
is the contraction $\CI(M; TM) \otimes \CI(M; T^*M) \to \CI(M)$.
If $E$ is endowed with a metric $g^E$ and $\nabla^E$ satisfies 
\begin{equation*}
   g^E(\nabla_X \xi, \eta) + g^E(\xi, \nabla_X \eta) \seq X g^E(\xi, \eta)\,,
\end{equation*} 
then we say that $\nabla^E$ is \emph{metric preserving.}
We endow $TM$ with the Levi-Civita connection $\nabla^M \colon 
\CI(M; TM) \to \CI(M; T^*M \otimes TM)$, which is the unique metric 
preserving connection on $TM$ that is also ``torsion-free,'' in the 
sense that $\nabla_X^M Y - \nabla_Y^M X = [X, Y]$ for all (smooth) vector 
fields $X, Y \in \CI(M; TM)$.

We endow each tensor bundle $T^{*\otimes k}M$ with the induced 
(Levi-Civita) tensor product connection:
\begin{equation*}
   \nabla_X (\xi_1 \otimes \ldots \otimes \xi_{k})
   := \nabla_X (\xi_1) \otimes \ldots \otimes \xi_{k} 
   + \xi_1 \otimes \nabla_X (\xi_2) \otimes \ldots \otimes \xi_{k} + 
   \ldots + \xi_1 \otimes \ldots \otimes \nabla_X (\xi_{k})\,.
\end{equation*}
We also endow all other tensor bundles with the induced tensor 
product connections, generically denoted by $\nabla$. (Thus we do not
try to always distinguish between the connections on different vector 
bundles, except for the Levi-Civita connection.)

\begin{definition}\label{def_ttly_bdd_curv} 
We say that $M$ has \emph{totally bounded curvature} 
if its curvatures $R^M := (\nabla^M)^{2}$ and all its covariant 
derivatives $(\nabla^M)^k R^M$ are bounded.
\end{definition}

Let $I \subset \RR$ be an interval. A $C^1$-curve $\gamma : I \to M$
is a \emph{geodesic} if $\nabla_{\gamma'(t)}^M \gamma'(t) = 0$. It is 
locally distance minimizing and uniquely determined by any $\gamma'(t_0)$,
$t_0 \in I$. If $M = \RR^{d}$ with the usual metric, then a geodesic is just
a (part of a) straight line.
Let $\exp^M\colon \maU \subset TM \to M$ be the geodesic exponential map
associated to the metric (where $\maU$ is a suitable open neighborhood of 
the zero section), that is, $\exp^M(v) = \gamma_v(1)$, where $\gamma_v$ is 
the unique geodesic with $\gamma'(0) = v$. 
By $\exp_p^M \colon \maU \cap T_pM \to M$ we denote the 
restriction of the geodesic 
exponential map to $T_pM \cap \maU$.

We shall need the \emph{injectivity radius} $\rinj(M)$ of a Riemannian 
manifold $(M, g)$. To define it, for any metric space $(X, d)$, 
$p \in X$, and $r \in \RR$, we let
\begin{equation}\label{eq.def.ball.r}
   B_{r}^{X}(x) \ede \{ y \in X \mid d(x, y) < r\}\,,
\end{equation}
that is, the \emph{open ball of radius $r$ and center $x$ in $X$.}
(We shall usually assume that $r > 0$, to obtain a non-empty set.)
In particular, $B_r^{M}(p)$ will be the open 
ball of radius $r$ in $M$ centered at $p \in M$. We endow $T_pM$ 
with the induced metric, so $B_r^{T_pM}(0)$ is the set of tangent 
vectors at $M$ in $p$ of length $< r$. With this preparation, we 
let 
\begin{equation}\label{eq.def.injr}
\begin{gathered}
   \mathop{r_{\mathrm{inj}}^M}(p) \define \sup\{ r \mid 
   \exp_p^M \colon B_r^{T_pM}(0) 
   \to B_r^{M}(p) \text{ is a diffeomorphism} \} \ \ \mbox{ and}\\
   \rinj(M) \define \inf_{p \in M} \mathop{r_{\mathrm{inj}}^M}(p).
\end{gathered}
\end{equation}

The following concept will play a crucial role in this paper.

\begin{definition}\label{def_bdd_geo_{n}o_bdy}
A boundaryless Riemannian smooth manifold $(M, g)$ is said to have
\emph{bounded geometry} if $M$ has totally bounded curvature
(Definition \ref{def_ttly_bdd_curv})
and $\rinj(M)>0$.
\end{definition}

There exist many equivalent definitions of manifolds with bounded 
geometry in the literature, see also \cite{AmannFunctSp, ElderingCR, 
GrosseSchneider, KordyukovLp1, KordyukovLp2, ShubinAsterisque}.
We now include some examples of manifolds with bounded geometry, 
some of which will be needed in what follows.

\begin{examples}\label{ex.bdg}
   The following are manifolds with bounded geometry:
   \begin{enumerate}[(i)]
      \item \label{item.ex.bdg1}
      A closed manifold (i.e. a smooth, compact manifold without 
      boundary).

      \item \label{item.ex.bdg2} $M = \RR^{d}$ with the standard 
      (Euclidean) metric 
      $$dx^2 := (dx_1)^2 + (dx_2)^2 + \ldots + (dx_d)^2\,.$$ 

      \item \label{item.ex.bdg3} 
      $D \times M$, where $M$ is a manifold with bounded geometry 
      and $D$ is discrete set. 
       (The analogous results holds true in the case of totally 
      bounded curvature: if $M$ has totally bounded courvature, 
      then so does $D \times M$.)

      \item \label{item.ex.bdg4} 
      $M_1 \times M_2$, where $M_1$ and $M_2$ have bounded geometry.
   \end{enumerate}
   Let us notice, however, that, if $M$ is a manifold with bounded 
   geometry and $U \subset M$ is an open subset, then $U$ has totally 
   bounded curvature, but may not have positive injectivity radius, so 
   $U$ may not have bounded geometry. 
\end{examples}

In relation to the phenomenon mentioned at the end of the last remark, 
we shall need the following concept (recall the notation of 
Equation \ref{eq.def.injr}):

\begin{definition}\label{def.rel.bg}
   Let $N$ be a Riemannian manifold with 
   totally bounded curvature. A subset $K \subset N$ is said 
   to be a subset of \emph{relative positive injectivity radius}
   if $$\inf_{x \in K} \mathop{r_{\mathrm{inj}}^{N}}(x) > 0.$$ 
\end{definition}

We have the following simple result (compare with Corollary 2.24 of \cite{AGN1}).

\begin{lemma}\label{lemma.gluing2}
   Let $(M, g)$ be a Riemannian manifold \emph{without boundary}
   and $W_i \subset M$, $i = 1, \ldots, k$, be open subsets.
   We assume that
   \begin{enumerate}[(i)]
      \item $(W_i, g)$ has totally bounded curvature for any $i = 1, \ldots, k$,
      \item there exist subsets $W_i' \subset W_i$ of relative positive injectivity 
      radius (Definition \ref{def.rel.bg}) such that $M = \bigcup_{i=1}^{k} W_i'.$  
   \end{enumerate}
   Then $(M, g)$ is a Riemannian manifold with bounded geometry. 
\end{lemma}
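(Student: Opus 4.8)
The plan is to verify directly the two conditions of Definition \ref{def_bdd_geo_{n}o_bdy}: that $(M,g)$ has totally bounded curvature and that $\rinj(M)>0$. Both will be obtained from the corresponding local statements on the $W_i$; the only structural inputs are that each $W_i$ is an \emph{open} submanifold of $M$, that there are \emph{finitely many} indices, and that the $W_i'$ \emph{cover} $M$.

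The curvature bound is the easy part. Since $W_i\subset M$ is open, the Levi-Civita connection, the curvature tensor, and its covariant derivatives of $(W_i,g)$ are the restrictions to $W_i$ of those of $(M,g)$, so hypothesis (i) says precisely that $\sup_{W_i}\bigl|(\nabla^M)^kR^M\bigr|_g<\infty$ for all $k$. Restricting further to $W_i'\subset W_i$, taking the maximum over the finitely many $i$, and using $M=\bigcup_{i=1}^{k}W_i'$, we get $\sup_M\bigl|(\nabla^M)^kR^M\bigr|_g=\max_{i}\sup_{W_i'}\bigl|(\nabla^M)^kR^M\bigr|_g<\infty$ for every $k$; hence $M$ has totally bounded curvature.

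The core of the argument is the pointwise comparison
\[
   r_{\mathrm{inj}}^{M}(x)\ \ge\ r_{\mathrm{inj}}^{W_i}(x)\qquad\text{for every }x\in W_i .
\]
Granting it, the conclusion is immediate: if $x\in W_i'$ then, by hypothesis (ii) and Definition \ref{def.rel.bg}, $r_{\mathrm{inj}}^{M}(x)\ge r_{\mathrm{inj}}^{W_i}(x)\ge\inf_{y\in W_i'}r_{\mathrm{inj}}^{W_i}(y)=:\varepsilon_i>0$, and since the $W_i'$ cover $M$ we obtain $\rinj(M)\ge\min_{1\le i\le k}\varepsilon_i>0$. To prove the comparison, fix $x\in W_i$ and $r<r_{\mathrm{inj}}^{W_i}(x)$; I want to show that $\exp_x^M$ is defined on $B_r^{T_xM}(0)$ and restricts there to a diffeomorphism onto $B_r^{M}(x)$. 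The step I expect to be the main obstacle is the following claim: \emph{every piecewise-$C^1$ path in $M$ that starts at $x$ and has length less than $r_{\mathrm{inj}}^{W_i}(x)$ remains inside $W_i$.} The mechanism: if $\sigma\colon[0,1]\to M$ starts at $x$ with length $\ell<\ell'<r_{\mathrm{inj}}^{W_i}(x)$ and $s^{*}:=\sup\{\,s\in[0,1]\mid\sigma([0,s))\subset W_i\,\}$, then for $s<s^{*}$ the arc $\sigma|_{[0,s]}$ lies in $W_i$ with length $<\ell$, hence $d_{W_i}(x,\sigma(s))<\ell$; by the Gauss lemma for the normal ball of radius $\ell'<r_{\mathrm{inj}}^{W_i}(x)$, the set $\{\,y\in W_i\mid d_{W_i}(x,y)\le\ell\,\}$ is contained in the \emph{compact} set $\exp_x^{W_i}\bigl(\{\,v\in T_xW_i\mid|v|\le\ell'\,\}\bigr)\subset W_i$; so $\sigma([0,s^{*}))$ is trapped in a fixed compact subset of $W_i$, which forces $\sigma(s^{*})\in W_i$ and, $W_i$ being open, $s^{*}=1$.

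Once the claim holds, the rest is routine. Every geodesic of $M$ starting at $x$ of length $<r$ stays in $W_i$ by the claim, hence is a geodesic of $W_i$ (the geodesic equation being local), so $\exp_x^M$ is defined on $B_r^{T_xM}(0)=B_r^{T_xW_i}(0)$ and agrees there with $\exp_x^{W_i}$. Moreover $B_r^{M}(x)=B_r^{W_i}(x)$: the inclusion $\supseteq$ holds since $d_M\le d_{W_i}$, while for $\subseteq$ a path of length $<r$ from $x$ to a point $y$ stays in $W_i$ by the claim, so $y\in W_i$ and $d_{W_i}(x,y)<r$. Hence $\exp_x^M$ restricted to $B_r^{T_xM}(0)$ coincides with the diffeomorphism $\exp_x^{W_i}\colon B_r^{T_xW_i}(0)\to B_r^{W_i}(x)=B_r^{M}(x)$; letting $r\uparrow r_{\mathrm{inj}}^{W_i}(x)$ gives the comparison. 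I emphasize that no completeness of $M$ is used — the whole argument takes place inside geodesic balls of the $W_i$ — and that this ``gluing'' mechanism runs parallel to \cite[Cor.~2.24]{AGN1}.
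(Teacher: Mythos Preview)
Your proof is correct and follows essentially the same approach as the paper: establish totally bounded curvature from the cover by the $W_i$, then prove $\rinj(M)\ge\min_i\inf_{W_i'}r_{\mathrm{inj}}^{W_i}>0$ via the pointwise comparison $r_{\mathrm{inj}}^{M}(x)\ge r_{\mathrm{inj}}^{W_i}(x)$. The paper's proof simply asserts this comparison (``the geodesic ball $B_r^M(x)\subset W_i$ \ldots is a diffeomorphic image of the exponential map''), whereas you spell out the path-trapping argument behind it; the substance is the same.
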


\begin{proof} The first condition ensures that $M$ has 
   totally bounded curvature. Let 
   \begin{equation*}
      r \ede \min_{1 \le i \le k} \big ( \inf_{x \in W_i'}\
      \mathop{r_{\mathrm{inj}}^{W_i}}(x) \big )\,.
   \end{equation*}
   Then $r > 0$, by the second assumption (see Definition \ref{def.rel.bg}). 
   Let $x \in M$ be arbitrary. Let 
   $i \in \{1, \ldots, k\}$ be such that $x \in W_i'$. Then
   the geodesic ball $B_r^M(x) \subset W_i$ of radius $r > 0$
   is a diffeomorphic image of the exponential map, again by the 
   second assumption. This shows 
   that the injectivity radius of $M$ is $\ge r > 0$.
\end{proof}

More to the point, we shall need that the manifold $\Mce$ that 
we will construct next has bounded geometry. To construct 
this manifold, we shall need the following. Let $| \, \cdot \, |$
denote the metric on $\RR^{d}$. \textbf{We assume from now 
on Assumption \ref{assumpt.intro1},} in particular, 
we assume that we are given a countable or finite index set $\indexSet$, 
$R > 0$, and sets $\maV_{n} = \{p_{1n}, p_{2n}, \ldots, p_{Nn}\}
\subset \RR^{d}$, $n \in \indexSet$, (each with $N$ elements) such 
that, for all $n \in \indexSet$ and all $1 \le i < j \le N$, we have
   $|p_{in} - p_{jn}| \ge R.$ 
We recall also that this assumption fixes a smooth, non-decreasing function 
$\eta$ satisfying 
\begin{equation*}
   \begin{cases}
      \ \eta : [0,\infty) \to [0, R/6]\\
      \ \eta(t) = t & \mbox{for } t \le R/8\\
      \ \eta(t) = R/6 & \mbox{for } t \ge R/5\,,
   \end{cases}
\end{equation*}
which then allows us to define
\begin{equation*} 
   r_{n} (x) \ede \eta(\min \{|x - p|, p \in \maV_{n}\}) \,.
\end{equation*} 
We have the following alternative formulas for $r_{n}$:
\begin{equation}\label{eq.def.rn}
   r_{n} (x) \ede \eta(\min \{|x - p|, p \in \maV_{n}\})
   \seq \eta( \dist_{\RR^{d}}(x, \maV_{n})) \seq \eta(|x - \maV_{n}|)\,,
\end{equation}
where $\dist_{\RR^{d}}(x, \maV_{n})$ is the distance from $x$ to the set 
$\maV_{n}$ in the Euclidean metric on $\RR^{d}$, as we did in 
the Introduction, Section \ref{ssec.setting}.

The conditions on $\eta$ in Assumption \ref{assumpt.intro1} (recalled 
above) are stronger than needed in this section, but 
they will be useful in the next section. By rescaling, we may assume that 
$R = 1$, which we shall do when convenient.

\begin{proposition}\label{prop.Mce.bg}
   Let $M_{n} := \RR^{d} \smallsetminus \maV_{n}$ be endowed with the 
   metric $\hat g_{n} := r_{n}^{-2}dx^2$ (see Assumption \ref{assumpt.intro1}
   or Equation \ref{eq.def.rn} and the discussion preceeding it).
   Let $\Mce := \cup_{n \in \indexSet} \{n\} \times M_{n}$ 
   be the \emph{disjoint} union of the manifolds
   $M_{n}$, $n \in \indexSet$. We endow $\Mce$ with the metric $\hat g$ such 
   that $\hat g = \hat g_{n}$ on $\{n\} \times M_{n} \equiv M_{n}$, as before. 
   Then $\Mce$ has bounded geometry.
\end{proposition}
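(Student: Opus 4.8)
The plan is to apply Lemma \ref{lemma.gluing2} to each manifold $M_n = \RR^d \smallsetminus \maV_n$ separately (with a uniform bound in $n$), and then to invoke Example \ref{ex.bdg}\eqref{item.ex.bdg3} to pass from a uniform bound on each slice to a bound on the disjoint union $\Mce = D \times (\text{slices})$. Since $\Mce$ is a disjoint union indexed by the discrete (at most countable) set $\indexSet$, it suffices to show that each $(M_n, \hat g_n)$ has bounded geometry with injectivity radius and curvature bounds \emph{independent of $n$}; the disjoint-union structure then does the rest, exactly as in the examples. After rescaling we may assume $R = 1$, so all estimates become absolute.

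First I would establish totally bounded curvature of $(M_n, \hat g_n)$ uniformly in $n$. The metric $\hat g_n = r_n^{-2} dx^2$ is conformal to the flat metric, with conformal factor $r_n = \eta(\dist_{\RR^d}(x, \maV_n))$. Because $\eta$ is a fixed smooth function with $\eta(t) = t$ for $t \le 1/8$ and $\eta(t) = 1/6$ for $t \ge 1/5$, and because the points of $\maV_n$ are $\ge 1$ apart, near any point $x$ at most one point $p_{in}$ of $\maV_n$ is within distance $1/5$; hence locally $r_n(x) = \eta(|x - p_{in}|)$ (or $r_n \equiv 1/6$ far from $\maV_n$). The standard formulas for the curvature of a conformal change of metric express $R^{\hat g_n}$ and its $\hat g_n$-covariant derivatives in terms of $\log r_n = \log \eta(|x-p_{in}|)$ and its Euclidean derivatives, all measured in the $\hat g_n$-metric — i.e. weighted by appropriate powers of $r_n$. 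The key point is scaling invariance: writing $\rho = |x - p_{in}|$, the function $\log \eta(\rho)$ behaves like $\log \rho$ near the puncture, whose $k$-th Euclidean derivative is $O(\rho^{-k}) = O(r_n^{-k})$, and these powers of $r_n^{-1}$ are exactly compensated when one converts to $\hat g_n$-covariant derivatives (each $\hat g_n$-unit covector costs a factor $r_n$). Away from $\maV_n$, $r_n$ is bounded between two positive constants and smooth with uniformly bounded derivatives (since $\eta$ is fixed), so there is nothing to check there. Since $\eta$ is a single fixed function and the geometry near each puncture is a fixed model (independent of $n$ and of which $p_{in}$ one sits near), all these curvature bounds are uniform in $n$.

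Next I would bound the injectivity radius of $(M_n, \hat g_n)$ from below, uniformly in $n$. I would cover $M_n = \RR^d \smallsetminus \maV_n$ by finitely many (in fact two ``types'' of) pieces of relative positive injectivity radius and apply Lemma \ref{lemma.gluing2}. Concretely: for the ``far'' region $W_0 = \{x : \dist(x, \maV_n) > 1/8\}$, the metric $\hat g_n$ equals $36\, dx^2$ on $\{\dist \ge 1/5\}$ and is a fixed bounded perturbation of the flat metric in the transition annulus, so $W_0$ embeds with relative positive injectivity radius in $(\RR^d, \hat g_n)$ — which itself has totally bounded curvature and infinite injectivity radius on the complement of a small neighborhood of $\maV_n$. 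For the ``near'' region around each puncture $p_{in}$, the punctured ball $\{0 < |x - p_{in}| < 1/8\}$ with the metric $r_n^{-2} dx^2 = |x - p_{in}|^{-2} dx^2$ is isometric to a fixed model: a half-cylinder $(\log(1/8), \infty) \times S^{d-1}$ (up to the usual change of variables $t = -\log|x-p_{in}|$), which has bounded geometry and which I would view as sitting with relative positive injectivity radius inside a slightly larger fixed model manifold with totally bounded curvature (e.g. the full cylinder $\RR \times S^{d-1}$, or $\RR^d\smallsetminus\{p_{in}\}$ with its conical metric completed). The constants here depend only on $\eta$ and $d$, not on $n$ or $i$, because the model near each puncture is always the same. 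Applying Lemma \ref{lemma.gluing2} with this finite family $\{W_0', W_{1n}', \dots, W_{Nn}'\}$ (where the primed sets are slightly shrunk so they still cover $M_n$) gives $\rinj(M_n) \ge r_0 > 0$ with $r_0$ independent of $n$, and totally bounded curvature with uniform bounds.

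Finally, with uniform bounds on curvature and injectivity radius across all slices $M_n$, the disjoint union $\Mce = \cup_{n} \{n\} \times M_n$ has bounded geometry: its curvature tensor on $\{n\} \times M_n$ is just that of $M_n$, hence uniformly bounded; and for $x \in \{n\} \times M_n$, the geodesic ball $B_r^{\Mce}(x)$ of radius $r < r_0$ is entirely contained in the single slice $\{n\} \times M_n$ (distinct slices are at infinite distance), so $\rinj^{\Mce}(x) = \rinj^{M_n}(x) \ge r_0 > 0$. This is exactly the mechanism of Example \ref{ex.bdg}\eqref{item.ex.bdg3}, and it completes the proof. I expect the main obstacle to be the curvature computation near the punctures: carefully writing out the conformal curvature formula for $g = \varphi^{-2} dx^2$ and checking, by the scaling argument above, that $(\nabla^{\hat g_n})^k R^{\hat g_n}$ is bounded uniformly — this is where one must be genuinely careful, whereas the injectivity-radius step and the passage to the disjoint union are comparatively soft once the models are identified.
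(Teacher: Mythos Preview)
Your approach is correct, but it is organized differently from the paper's and slightly less efficient. The paper applies Lemma~\ref{lemma.gluing2} \emph{directly to the whole disjoint union} $\Mce$ rather than slice by slice: it covers $\Mce$ by three open sets $W_A, W_B, W_C$ (near-puncture, transition annulus, far region), and the key observation is that, since the balls $B_{1/2}(p_{in})$ are disjoint and the metric near each $p_{in}$ is built from the same function $\eta(|x|)$, one has global isometries $W_A \simeq \maA \times \indexSet \times \{1,\dots,N\}$ and $W_B \simeq \maB \times \indexSet \times \{1,\dots,N\}$ for fixed model pieces $\maA, \maB$. Uniformity in $n$ is thus automatic --- it comes out of the product-with-discrete-set structure (Example~\ref{ex.bdg}\eqref{item.ex.bdg3}) rather than being checked by hand. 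In particular, the paper never touches the conformal curvature formula: totally bounded curvature of $W_A$ follows because $\maA \simeq S^{d-1}\times(0,\infty)$, and of $W_B, W_C$ because they are (conformally rescaled) relatively compact Euclidean pieces.

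Your version works too, but note two points. First, your citation of Example~\ref{ex.bdg}\eqref{item.ex.bdg3} at the end is not quite on the nose, since $\Mce$ is not literally $D\times M$ for a single $M$ (the $M_n$ differ); the direct argument you give in the final paragraph is what actually does the job. Second, your two-region cover lumps the transition annulus into the ``far'' region $W_0$, where the metric is neither flat nor the pure cylinder; the paper separates this into a third piece $W_B$ precisely so that each piece is a clean model and no curvature computation is needed. What your route buys is conceptual simplicity (two regions, one slice at a time); what the paper's route buys is that uniformity and curvature bounds fall out for free from isometry with a fixed model, with no analysis of the conformal factor.
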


\begin{proof}
   Let us rescale our metric so that $R = 1$ in Assumption \ref{assumpt.intro1}.
   We shall use Lemma \ref{lemma.gluing2}. We denote 
   $\maV_{n} = \{ p_{1n}, p_{2n}, \ldots,  p_{Nn} \},$ as usual.
   For each $n \in \indexSet$, we let 
   \begin{equation}
      \begin{cases}
         \ A_{n} \ede \{ x \in \RR^{d} \mid\ \exists\, i \in \{1, \ldots, N\}\
         \mbox{ such that }\ |x - p_{in}| < \frac 18\}\,,
         \\
         \ B_{n} \ede \{ x \in \RR^{d} \mid\ \exists\, i \in \{1, \ldots, N\}\ 
         \mbox{ such that }\ |x - p_{in}| \in (\frac 1{16}, \frac 12)\}\\
         \ C_{n} \ede \{ x \in \RR^{d} \mid\ \forall\, i \in \{1, \ldots, N\}\
         \mbox{ we have }\ |x - p_{in}| > 
         \frac 15\}\,.
      \end{cases}
   \end{equation}
   The choice of the parameters $\frac 18$ and $\frac 15$ comes from 
   the definition of $\eta$, whereas the other parameters are such 
   that $\frac 1{16} < \frac 18 < \frac 15 < \frac 12$.
   The last condition ensures, in particular, that  
   $A_{n} \cup B_{n} \cup C_{n} = \RR^{d}$, for each $n \in \indexSet$.
   We then let $W_A := \cup_{n \in \indexSet} \{n\} \times A_{n} 
   \subset M$ be the \emph{disjoint} union of the sets 
   $A_{n}$, $n \in \indexSet$. Similarly, we define 
   $W_B := \cup_{n \in \indexSet} \{n\} \times B_{n}$ and 
   $W_C := \cup_{n \in \indexSet} \{n\} \times C_{n}$. 
   We endow all these sets with the metric $\hat g := r^{-2}dx^{2}$.
   
   Let $\maA := B_{1/8}^{\RR^{d}} \smallsetminus \{0\}$ and 
   $\maB := B_{1/2}^{\RR^{d}} \smallsetminus \overline{B}_{1/16}^{\RR^{d}}$,
   both being endowed with the metric $\eta(|x|)^{-2} dx^2$.
   Because, for each $n \in \indexSet$, we have that the balls 
   $B_{1/2}^{\RR^{d}}(p_{in})$ are disjoint and because of how the 
   metrics $\hat g_{n} := r_{n}^{-2}dx^2$ were constructed, we have natural 
   isometries (given by translations)
   \begin{equation}\label{eq.WA}
      W_A \simeq \maA \times \indexSet \times \{1, 2, \ldots, N\}
   \end{equation} 
   and 
   \begin{equation}\label{eq.WB}
      W_B \simeq \maB \times \indexSet \times \{1, 2, \ldots, N\}\,.
   \end{equation} 
   Notice next    that, on $\maA$, we have 
   $\hat g = \eta(|x|)^{-2} dx^2 = |x|^{-2} dx^2$, by the definitions.
   Let $S^{d-1}$ denote the unit sphere in $\RR^{d}$.
   Hence the manifold $(\maA, \hat g)$ is isometric to $S^{d-1} \times (0, \infty)$,
   so it has totally bounded curvature, and hence so does $W_A$, by 
   Equation \eqref{eq.WA} and by Example \ref{ex.bdg}\eqref{item.ex.bdg3}.
   Moreover, the set $\maA' := \overline{B}_{1/12}^{\RR^{d}} \smallsetminus \{0\}$ 
   is a subset of $\maA$ of relative positive injectivity radius 
   (Definition \ref{def.rel.bg}).
   Let $W_A' \simeq \maA' \times \indexSet \times \{1, 2, \ldots, N\}$
   be the corresponding subset of $W_A$, which will hence analogously be 
   a subset of $W_A$ of relative positive injectivity radius. 
   
   Similarly, the manifold $\maB := B_{1/2}^{\RR^{d}} \smallsetminus 
   \overline{B}_{1/16}^{\RR^{d}}$ is a relatively compact annulus 
   conformally equivalent to the usual one (via the factor $r_{n}$
   on each component), so it also has totally bounded curvature. Hence 
   $W_B$ also has totally bounded curvature.
   Again, the set $\maB' := \overline{B}_{1/3}^{\RR^{d}}
   \smallsetminus B_{1/12}^{\RR^{d}}$ is a subset of $\maB$
   of relative positive injectivity radius. Analogously, let 
   $W_B' \simeq \maB' \times \indexSet \times 
   \{1, 2, \ldots, N\}$ be the corresponding subset of $W_B$ of relative 
   positive injectivity radius.

   Finally, each $C_{n}$ is an open subset of $\RR^{d}$ with the induced 
   Euclidean metric so $W_C$ has totally bounded curvature as well. 
   A direct verification shows that $$C_{n}' := 
   \{ x \in \RR^{d} \mid\ \forall\, i \in \{1, \ldots, N\}\
   \mbox{ we have }\ |x - p_{in}| \ge \frac 13\}$$ is a  
   subset of $C_{n}$ of relative positive injectivity radius, and hence that  
   that $W_C' := \cup_{n \in \indexSet} C_{n}'$ is a subset of $W_C$ 
   of relative positive injectivity radius (this is just the verification that the 
   ball of radius $\frac 13 - \frac 15$ around each point of $x \in W_C'$
   is contained in $W_C$). By construction, we have 
   \begin{equation*}
      W_A' \cup W_B' \cup W_C' \seq M\,.
   \end{equation*}
   Hence the sets $W_A$, $W_B$, and $W_C$ satisfy both conditions 
   of Lemma \ref{lemma.gluing2}. That lemma yields then that $M$ is of 
   bounded geometry.
\end{proof}

We also have the following corollary of the proof. 

\begin{corollary}\label{cor.admissible.rn}
   We use the notation of Proposition \ref{prop.Mce.bg}, in particular, 
   $r_{n}(x) := \eta(|x-\maV_{n}|)$ and $r = r_{n}$ on $M_{n} \subset \Mce$. 
   For each multi-index $\alpha = (\alpha_{1},
   \alpha_{2}, \ldots, \alpha_{d}) \in \ZZ_{+}^{d}$ with 
   $|\alpha| := \alpha_{1} + \alpha_{2} + \ldots + \alpha_{d}$, 
   and each $b \in \RR$, we have that 
   \begin{equation*}
      r^{-b} (r \pa)^{\alpha} r^{b}  \, , \
      r^{|\alpha|-b} \pa^{\alpha} r^{b}
      \in L^{\infty}(\Mce)
   \end{equation*}
   and that they depend smoothly on $b$,
   where $(r\pa)^{\alpha} u := (r\pa_{1})^{\alpha_{1}}
   (r\pa_{2})^{\alpha_{2}} \ldots (r\pa_{d})^{\alpha_{d}}u$.
\end{corollary}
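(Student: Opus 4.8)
The plan is to reduce everything to a local, dimensionless computation near a single singular point $p_{in}$, since away from the $\maV_{n}$ the function $r_{n}$ is locally constant (equal to $R/6$ on the region $|x-p_{in}|\ge R/5$) and the claims are then trivial. First I would fix the rescaling $R=1$, as permitted, and observe that it suffices to bound $r^{-b}(r\pa)^\alpha r^b$ and $r^{|\alpha|-b}\pa^\alpha r^b$ in $L^\infty$ on each of the three regions $A_n$, $B_n$, $C_n$ from the proof of Proposition \ref{prop.Mce.bg}, uniformly in $n$, because $\indexSet$ is at most countable and these regions cover $\RR^d\smallsetminus\maV_n$. On $C_n$ the function $r=R/6$ is constant, so $(r\pa)^\alpha r^b=0$ for $|\alpha|\ge 1$ and equals $r^b$ for $\alpha=0$; there is nothing to prove. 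On $B_n$ (the ``annular'' region $|x-p_{in}|\in(\tfrac1{16},\tfrac12)$) and on $A_n$ (where $r_n(x)=|x-p_{in}|$ exactly, because there $|x-p_{in}|<\tfrac18<\tfrac15$), the translation isometries \eqref{eq.WA}, \eqref{eq.WB} identify the relevant pieces with the fixed model sets $\maA$ and $\maB$, so the $L^\infty$ norms are the $L^\infty$ norms of fixed functions on $\maA$ or $\maB$, independent of $n$; this is where the uniformity comes from for free.

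Next I would do the model computation. Write $\rho(x):=|x|$ on $\maA$, where $r=\eta(\rho)=\rho$. By the chain rule $\pa_j\rho = x_j/\rho$, and more generally $\pa^\alpha \rho^c$ is a finite sum of terms of the form $(\text{polynomial in }x/\rho)\cdot\rho^{c-|\alpha|}$, which one proves by an easy induction on $|\alpha|$ using $\pa_j(x_k/\rho)=\delta_{jk}/\rho - x_jx_k/\rho^3$. Consequently $\rho^{|\alpha|-b}\pa^\alpha\rho^b$ is a sum of bounded functions of $x/\rho\in S^{d-1}$ with coefficients that are polynomials in $b$ (hence smooth, indeed entire, in $b$), and so it lies in $L^\infty$. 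The operator version is similar: $(r\pa)^\alpha = (\rho\pa)^\alpha$ and $\rho\pa_j$ applied to a function homogeneous of degree $s$ again produces functions homogeneous of degree $s$ (with an extra factor $s$ and angular-derivative terms), so $r^{-b}(r\pa)^\alpha r^b = \rho^{-b}(\rho\pa)^\alpha\rho^b$ is again a polynomial-in-$b$ combination of bounded functions of $x/\rho$. On $\maB$ the same conclusion is even easier, since $\maB$ is a \emph{relatively compact} region where $\eta(\rho)$ is smooth and bounded away from $0$, so $\eta(\rho)^{\pm b}$ and all derivatives $\pa^\alpha(\eta(\rho)^b)$ are continuous on the compact closure, hence bounded, with the $b$-dependence smooth because $\eta(\rho)^b=e^{b\log\eta(\rho)}$ and $\log\eta(\rho)$ is a fixed bounded smooth function there; the factor $r^{|\alpha|-b}=\eta(\rho)^{|\alpha|-b}$ is likewise bounded. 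Finally one remarks that the smoothness in $b$ is uniform: each coefficient function is independent of $n$, so the $L^\infty$-valued map $b\mapsto r^{-b}(r\pa)^\alpha r^b$ (and its partner) is real-analytic into $L^\infty(\Mce)$, with derivatives obtained by differentiating the finitely many $b$-polynomial coefficients.

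The main obstacle, such as it is, is purely bookkeeping: organizing the induction on $|\alpha|$ so that one sees explicitly that every term in $\pa^\alpha\rho^b$ (resp. $(\rho\pa)^\alpha\rho^b$) carries exactly the right homogeneity degree $b-|\alpha|$ (resp. $b$) and a coefficient that is a bounded function of the direction $x/\rho$ times a polynomial in $b$ whose degree is at most $|\alpha|$. I would phrase this as a short lemma: for any $c\in\RR$, $\pa^\alpha\rho^c=\rho^{c-|\alpha|}P_\alpha(x/\rho;c)$ where $P_\alpha$ is a polynomial in its arguments (homogeneous of degree $0$ in $x$), proved by induction using $\pa_j(\rho^{c-|\alpha|}P_\alpha)=\rho^{c-|\alpha|-1}\big((c-|\alpha|)\tfrac{x_j}{\rho}P_\alpha + \rho\,\pa_j P_\alpha\big)$ and noting $\rho\,\pa_j$ maps degree-$0$ functions of $x/\rho$ to degree-$0$ functions of $x/\rho$. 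Granting that lemma and the trivial/compact cases on $C_n$ and $B_n$, the corollary follows by taking suprema over the three fixed model sets. There is no genuinely hard analytic point here; the corollary is exactly the statement that $r$ is an \emph{admissible weight} in the sense used in the literature on Babu\v{s}ka--Kondratiev spaces, and the content is that the finite-width / bounded-geometry construction has been set up so that this holds uniformly in the family index $n$.
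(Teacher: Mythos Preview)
Your argument is correct and follows essentially the paper's approach: reduce to the single-point model $r=\eta(|x|)$ via the translation isometries from the proof of Proposition~\ref{prop.Mce.bg}, then induct on $|\alpha|$ using that $\rho\,\pa_j$ preserves bounded (degree-zero) functions of the angular variable $x/\rho$. The only difference is organizational: the paper carries out the model computation in one stroke on the blown-up polar coordinates $[0,\infty)\times S^{d-1}$, showing that $r^{-b}(r\pa)^\alpha r^b$ and $r^{|\alpha|-b}\pa^\alpha r^b$ extend to smooth functions there that vanish for $\rho$ large, whereas you treat the exact-cone region $\maA$ (via pure homogeneity, yielding polynomial-in-$b$ coefficients) and the relatively compact annulus $\maB$ (via compactness and $\eta>0$) separately---a slightly more elementary but equally valid bookkeeping.
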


\begin{proof}
   Since all the functions $r_{n}$ are constructed out of the single function 
   $\eta(|x|)$ around each ball $B_{R/5}^{M}(p_{jn})$, are constant outside 
   these balls, and these balls are disjoint, it is enough to prove the result 
   for $r = \eta(|x|)$ (the function in a single ball).
   In other words, we assume that $\indexSet$ is reduced to a single element
   $n$ and $\maV_{n}$, in turn, is also reduced to a single element. 
   The proof in this case is elementary (and essentially known). We include the main 
   steps of the calculation for the benefit of the reader. Let 
   $\rho(x) := |x|$ and $r(x) = \eta(\rho(x))$, as before. We consider 
   generalized polar coordinates $(\rho, x') \in [0, \infty)\times S^{d-1}$,
   where $S^{d-1}$ denotes the unit sphere in $\RR^{d}$, as usual. We first 
   notice that $\rho := (x_{1}^{2} + x_{2}^{2} + \ldots + x_{d}^{2})^{1/2}$, 
   $\pa_{j} \rho = x_{j}/\rho =: x_{j}'$, and 
   \begin{equation*}
      \rho \pa_{j} x_{i}' \seq \delta_{ij} - x_{i}' x_{j}'
   \end{equation*}
   are all smooth functions on $[0, \infty)\times S^{d-1}$, and hence 
   $\rho \pa_{j}$ maps $\CI([0, \infty)\times S^{d-1})$ to itself. Hence 
   also 
   \begin{equation}\label{eq.aux.selfmap}
      r \pa_{j} : \CI([0, \infty)\times S^{d-1}) \to 
      \CI([0, \infty)\times S^{d-1})\,,
   \end{equation}
   because the only problem can occur near $r = 0$ and $r \pa_{j} = \rho \pa_{j}$ 
   there. Since $\eta(t) = t$ for $t$ small and $\eta(t) =$ constant for 
   $t$ large, our previous calculation gives that 
   \begin{equation*}
      r^{-b}(r\pa_{j}) r^{b} \seq b \pa_{j}r \ede b \pa_{j} \eta(\rho)
      \seq b \eta'(\rho) \pa_{j}\rho 
   \end{equation*}
   defines a \emph{smooth function} on $[0, \infty) \times S^{d-1}$ that 
   vanishes at infinity. We then obtain 
   by induction on $|\alpha|$ that $r^{-b} (r \pa)^{\alpha} r^{b}$ is also a 
   \emph{smooth function} on $[0, \infty) \times S^{d-1}$ that 
   vanishes at infinity as follows. Indeed, let $\phi_{\alpha} 
   := r^{-b} (r \pa)^{\alpha} r^{b}$. Then 
   \begin{equation*}
      r^{-b} r \pa_{j} (r \pa)^{\alpha} r^{b}  \seq r^{-b} r \pa_{j} 
      [r^{b} \phi_{\alpha}] 
      \seq \phi_{\alpha} r^{-b} r \pa_{j} r^{b} + r \pa_{j} 
      \phi_{\alpha} \,,
   \end{equation*}
   which is smooth in generalized spherical coordinates (that is, as a function 
   on $[0, \infty) \times S^{d-1}$) in view of the induction hypothesis
   and of Equation \eqref{eq.aux.selfmap}. Since 
   $r^{-b} (r \pa)^{\alpha} r^{b}$
   is a smooth function on $[0, \infty) \times S^{d-1}$ that vanishes outside 
   a compact set, it is bounded (i.e. in $L^{\infty}(\Mce)$, as claimed). 
   If we regard this function as a function of 
   $b$ also, the above calculations show that it is a smooth function on 
   $\RR \times [0, \infty) \times S^{d-1}$, which yields the smooth 
   dependence on $b \in \RR$. This proves the first half of our result. 
   
   Let us prove next the second half of our result. It is enough to prove
   that $r^{|\alpha|-b} \pa^{\alpha} r^{b}$ extends to a smooth function on 
   $\RR \times [0, \infty) \times S^{d-1}$ that vanishes for $r$ large. To 
   this end, we shall use the facts already proved and induction on 
   $|\alpha|$. For $|\alpha| = 1$, the result has already been proved 
   (when $|\alpha| = 1$, the two halves of our result are, actually, the 
   same). The induction step follows from
   \begin{equation}\label{eq.ca.aici}
      r^{|\alpha| + 1 -b} \pa_{j} \pa^{\alpha} r^{b} \seq r \pa_{j} 
      \big ( r^{|\alpha|-b} \pa^{\alpha} r^{b} \big ) - 
      (|\alpha|-b) (\pa_{j} r) r^{|\alpha| - b} \pa^{\alpha} r^{b} \,,
   \end{equation}
   Equation \eqref{eq.aux.selfmap} and the fact that $\pa_{j} r$ is smooth 
   on $[0, \infty) \times S^{d-1}$, a fact already proved. This 
   completes the proof.
\end{proof}

The above corollary is well-known for one function $r_{n}$,
the point of our result being that we obtain \emph{uniform} 
estimates in $n \in \indexSet$; in the terminology of \cite{AGN_CR},
it means that $r^{b}$ is an admissible weight. We let 
\begin{equation}\label{eq.def.Winfinf}
   W^{\infty, \infty}(M; E) \ede \{u \mid \nabla^{k} 
   u \in L^{\infty}(M; E), \in \ZZ_{+}\} \,,
\end{equation}
and we notice that this space does depend on the metrics on 
$E$ and on $M$ via the covariant differential and the metric 
on $TM$ (unlike $L^\infty(M)$). We record the following simple 
technical lemma for further use.

\begin{lemma}\label{lemma.diff.ops}
   We can write $(r\pa)^{\alpha}$ as a linear 
   combination of operators of the form 
   $a_{\beta} r^{|\beta|} \pa^{\beta}$ with 
   $a_{\beta} \in W^{\infty, \infty}(\Mce)$ and $\beta \le \alpha$. 
   Similarly, $r^{|\alpha|}\pa^{\alpha}$ can be written as a linear 
   combination of operators of the form $a_{\beta} (r \pa)^{\beta}$
   with $a_{\beta} \in W^{\infty, \infty}(\Mce)$ and $\beta \le \alpha$.
\end{lemma}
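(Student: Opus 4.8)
The plan is to prove both statements simultaneously by induction on $|\alpha|$, using Corollary \ref{cor.admissible.rn} as the source of the coefficients that lie in $W^{\infty,\infty}(\Mce)$. The base case $|\alpha| = 0$ is trivial (both operators are the identity), and $|\alpha| = 1$ is also immediate since $r\pa_j = r^{|\beta|}\pa^{\beta}$ with $\beta = e_j$, and conversely $r\pa_j = 1 \cdot (r\pa)^{e_j}$. For the inductive step on the first claim, I would write $(r\pa)^{\alpha + e_j} = (r\pa_j)(r\pa)^{\alpha}$, apply the inductive hypothesis to expand $(r\pa)^{\alpha} = \sum_{\beta \le \alpha} a_{\beta} r^{|\beta|}\pa^{\beta}$, and then commute $r\pa_j$ past each term. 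The commutation produces
\begin{equation*}
   (r\pa_j)\big(a_{\beta} r^{|\beta|}\pa^{\beta}\big)
   = a_{\beta} r^{|\beta|+1}\pa_j\pa^{\beta}
   + \big(r\pa_j a_{\beta}\big) r^{|\beta|}\pa^{\beta}
   + a_{\beta}\big(r^{1-|\beta|}(r\pa_j)r^{|\beta|}\big) r^{|\beta|}\pa^{\beta}\,,
\end{equation*}
wait — more cleanly: $(r\pa_j)(a_\beta r^{|\beta|}\pa^\beta) = a_\beta r^{|\beta|+1}\pa_j\pa^\beta + r(\pa_j a_\beta) r^{|\beta|}\pa^\beta + a_\beta r(\pa_j r^{|\beta|})\pa^\beta$. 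The first term has the desired form with exponent $|\beta|+1 = |\beta + e_j|$; the third term equals $a_\beta (|\beta|\, r\pa_j r / r)\, r^{|\beta|}\pa^\beta$, and $r\pa_j r/r = \pa_j r$, which is in $W^{\infty,\infty}(\Mce)$ by Corollary \ref{cor.admissible.rn} (take $\alpha = e_j$, $b = 1$, noting $r^{1-1}\pa_j r^1 = \pa_j r$ is bounded together with all its $(r\pa)$-derivatives). The key technical point needed here is that $W^{\infty,\infty}(\Mce)$ is closed under the operators $r\pa_j$ (equivalently, under $\nabla$ in the metric $\hat g$, up to lower-order bounded terms), and under multiplication; both follow from Corollary \ref{cor.admissible.rn} and the Leibniz rule, since on $\Mce$ the covariant derivative $\nabla$ in the metric $\hat g = r^{-2}dx^2$ differs from the collection $\{r\pa_j\}$ by Christoffel symbols that are themselves built from $\pa_j r$ and hence bounded with all derivatives.

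For the second claim I would argue dually: $r^{|\alpha|+e_j}\pa^{\alpha+e_j} = r^{|\alpha|+1}\pa_j\pa^{\alpha}$, and I would use the identity (cf. \eqref{eq.ca.aici})
\begin{equation*}
   r^{|\alpha|+1}\pa_j\pa^{\alpha}
   = (r\pa_j)\big(r^{|\alpha|}\pa^{\alpha}\big) - |\alpha|(\pa_j r)\, r^{|\alpha|}\pa^{\alpha}\,,
\end{equation*}
then apply the inductive hypothesis to $r^{|\alpha|}\pa^{\alpha} = \sum_{\beta\le\alpha} b_\beta (r\pa)^\beta$, use that $r\pa_j$ maps $W^{\infty,\infty}(\Mce)$ to itself and that $r\pa_j$ applied to $(r\pa)^\beta$ stays within linear combinations of $c_\gamma (r\pa)^\gamma$, $\gamma \le \beta + e_j$, with $c_\gamma \in W^{\infty,\infty}(\Mce)$ (this last fact is exactly the first claim, applied to the composition $(r\pa_j)(r\pa)^\beta = (r\pa)^{\beta+e_j}$ reexpressed — or rather, one observes $(r\pa_j)(b_\beta (r\pa)^\beta) = (r\pa_j b_\beta)(r\pa)^\beta + b_\beta (r\pa)^{\beta + e_j}$ directly). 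The term $|\alpha|(\pa_j r) r^{|\alpha|}\pa^\alpha$ is handled by the inductive hypothesis on the second claim together with $\pa_j r \in W^{\infty,\infty}(\Mce)$.

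The main obstacle — really the only nonroutine ingredient — is establishing that $W^{\infty,\infty}(\Mce)$ is stable under the first-order operators $r\pa_j$ and under pointwise products, uniformly in $n \in \indexSet$. This is where Corollary \ref{cor.admissible.rn} does the work: it guarantees $\pa_j r \in L^\infty(\Mce)$ together with all its $(r\pa)$-derivatives (uniformly in $n$), and since $\nabla$ on $(\Mce,\hat g)$ is expressible through $\{r\pa_j\}$ plus bounded zeroth-order corrections, a function whose iterated $r\pa$-derivatives are all bounded is precisely a function in $W^{\infty,\infty}(\Mce)$. Once this equivalence is noted, both inductions close by the Leibniz rule with no further difficulty; the bookkeeping of multi-indices $\beta \le \alpha$ is purely formal.
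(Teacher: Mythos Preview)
Your proposal is correct and follows essentially the same approach as the paper: the paper's proof is a two-sentence sketch (``iterate Equation \eqref{eq.ca.aici}; the other direction is a very similar calculation''), and you have written out exactly that induction in detail, correctly identifying that the only nontrivial ingredient is the stability of $W^{\infty,\infty}(\Mce)$ under $r\pa_j$ and under products, which follows from Corollary \ref{cor.admissible.rn}. One minor bookkeeping point: since $(r\pa)^{\alpha}$ is defined with a fixed ordering $(r\pa_1)^{\alpha_1}\cdots(r\pa_d)^{\alpha_d}$, the identity $(r\pa)^{\alpha+e_j} = (r\pa_j)(r\pa)^{\alpha}$ is not literally true for arbitrary $j$; you should either run the induction by peeling off the rightmost factor, or (equivalently) observe that $[r\pa_i, r\pa_j] = (\pa_i r)\, r\pa_j - (\pa_j r)\, r\pa_i$ is again a $W^{\infty,\infty}$-combination of the $r\pa_k$, so any reordering is absorbed into the lower-order terms already present in your expansion.
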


\begin{proof} 
   This is a direct (albeit long) calculation. In one direction
   it follows by iterating Equation \eqref{eq.ca.aici}. In the 
   other direction, it is a very similar calculation.
\end{proof}

\subsection{Manifolds with boundary and relative bounded geometry}
\label{ssec.bg2}
Let now $\manifb$ be a Riemannian manifold \emph{with boundary,} 
then $\rinj(\manifb)=0$, so a manifold with non-empty
boundary will never have bounded geometry in the sense of the above
definition. The way around this conundrum was found by Schick 
\cite{Schick2001}, who has defined the concept of ``manifold with 
boundary and bounded geometry,'' (we shall call these manifolds 
``manifolds with boundary and \emph{relative} bounded geometry,'' 
to avoid confusions). We recall the equivalent definition of 
manifolds with boundary and relative bounded geometry in \cite{AGN1}. 
The main point of that definition is to assume that the boundary 
$\pa \manifb$ of $\manifb$ is a suitable submanifold of a 
(boundaryless) manifold $M$ with bounded geometry. 

Let hence $M$ be a (boundaryless) manifold with bounded geometry 
and let us consider a hypersurface $H \subset M$, that is, a 
submanifold $H$ of $M$ of codimension $\dim (M) - \dim (H) = 1$. 
We assume that $H$ carries a globally defined unit normal vector 
field $\nu$. We let 
\begin{equation}\label{eq.exp.perp}
   \exp^\perp(x,t) \define \exp_x^M(t\nu_x)
\end{equation} 
be the exponential in the direction of the chosen unit normal vector. 
We shall need the \emph{second fundamental form} $\II^{H}$ of $H$ in 
$M$, which, we recall, is defined by $\II^H(X,Y)\nu \define \nabla^{M}_XY - \nabla^{H}_XY$, 
\cite[page 149]{Petersen}, where $\nabla^{Z}$ is the Levi-Civita connection 
of a Riemannian manifold $Z$. Equivalently, since $g(\nu,\nabla^{H}_XY)=0$, we have 
$\II^H(X,Y) \define g(\nu, \nabla^{M}_XY)$.

\begin{definition}\label{def.hSurfaceBG} 
Let $(M, g)$ be a Riemannian manifold of bounded geometry and
$H \subset M$ be a hypersurface with unit normal vector field $\nu$ on
$H$. We say that $H$ is a \emph{bounded geometry hypersurface in $M$}
if it satisfies the following conditions:
\begin{enumerate}[(i)]
\item $H$ is a closed subset of $M$;
\item all covariant derivatives $(\nabla^H)^k \II^H$, $k \ge 0$, 
are bounded;
\item $\exp^\perp\colon H\times (-\delta,\delta)\to M$ is a
  diffeomorphism onto its image for some $\delta > 0$ (see 
  Equation \ref{eq.exp.perp}).
\end{enumerate}
We shall denote by $r_{H}$ the largest value of $\delta$ 
satisfying this definition.
\end{definition}

If $H$ is as in the above definition, then it has bounded geometry 
\cite{AGN1}. The assumption that $H$ has a globally defined unit normal 
vector field is just for convenience, but will always be satisfied in 
this paper. It is convenient for technical reasons to allow $H$
in the above definition to have a non-empty boundary.

\begin{example}\label{ex.bg.H} 
   Let $H \subset M$ be a 
   \emph{compact} hypersurface (it may have a non-empty boundary).
   Then $H$ is a bounded geometry hypersurface in $M$.
\end{example}

We are ready now to recall the definition of a central concept 
in this paper.

\begin{definition}\label{def_bdd_geo}  
   We shall say that $\manifb$  \emph{is a manifold with boundary and 
   relative bounded geometry} if $\manifb$ is isometrically contained 
   in a (boundaryless) Riemannian manifold $M$ with
   bounded geometry such that $\partial \manifb$ 
   is a bounded geometry hypersurface in $M$.
\end{definition}

Note that we use the term \emph{``manifold with boundary and 
\underline{relative} bounded geometry,''} which we think is more 
precise than the term 
\emph{``manifold with boundary and bounded geometry''} used in
\cite{AGN1, Schick2001} and in other papers; in particular, it 
may help avoid some confusions.

We will also need the following results concerning hypersurfaces.
Let $r_{H}$ be as in Definition \ref{def.hSurfaceBG}.
If $H$ is a compact hypersurface of $M$, we let $\omega_H$ be the largest 
$\delta>0$ (possibly $\infty$) such that 
$H\cap\exp^\perp(H\times(-\delta,0)\cup(0,\delta))=\emptyset$.

\begin{lemma} \label{lemma.normalinjectivity}
We use the notation recalled in the above paragraph. Let $A\geq0$. 
Then there exists a constant $C(A)>0$ such that the 
following statement holds.

Let $(M, g)$ be a complete Riemannian manifold whose sectional curvature 
is bounded from above by $A$. Let $H$ be a compact hypersurface of $M$ 
without boundary such that $||\II_H||_{L^\infty}\leq A$. Then 
$r_H\geq\min\left(C(A),\frac12\omega_H\right)$ (Definition 
\ref{def.hSurfaceBG}). 
\end{lemma}

\begin{proof}
   This follows from well-known arguments; see for instance 
   \cite{Boileau}, page 48, and \cite{Funar}, page 389. We review 
   this argument for the benefit of the reader. 
   
   Let $f_H$ be the focal radius of $H$, i.e.,  the largest largest 
   $\delta>0$ (possibly $\infty$) such that 
   $\exp^\perp:H\times(-\delta,\delta)\to M$ 
   is an immersion. It follows from the definition of $r_{H}$
   (Definition \ref{def.hSurfaceBG}) and that of $f_{H}$ that 
   $0<r_H\leq f_H$.
   By the proof of Lemma 2.3 in \cite{FunarGrimaldi}, there exists a 
   constant $C(A)>0$, depending only on $A$, such that 
   $f_H\geq C(A)$ (see also \cite{Boileau}, page 48, and \cite{Warner}, 
   Corollary 4.2). Thus, if $r_H = f_H$, the proof is complete.
      
   On the other hand, if $r_H<f_H$, then there exists a geodesic of length $2r_H$ that orthogonally intersects $H$ at its endpoints $a_1$ and $a_2$
   (see \cite{Hermann}, Theorem 4.2, and \cite{Kodani}, Lemma 6.3). In this case we have $\exp^\perp(a_1,2r_H)=a_2\in H$ or $\exp^\perp(a_1,-2r_H)=a_2\in H$, and so $2r_H\geq\omega_H$. This concludes the proof.
\end{proof}

\begin{lemma}\label{lemma.conformal2}
   Let $(M, g)$ be a Riemannian manifold and $\sigma$ a smooth function on $M$. 
   Let $H$ be hypersurface of $M$ with unit normal vector field $\nu$. Let $\II^H$ 
   be its second fundamental form with respect to $\nu$. Let $\II_\sigma^H$ be the 
   second fundamental form of $H$ in $M$ endowed with the metric $\sigma^2g$ with 
   respect to $\frac1\sigma\nu$. Then $\II^H$ and $\II_\sigma^H$
   are related by   
     $$\II^H_\sigma(X,Y)=\sigma\II^H(X,Y)-g\left(\nu,\nabla^{M}\sigma\right)g(X,Y)$$
   for all vector fields $X$ and $Y$ tangent to $H$, where $\nabla^{M}$ is the 
   Levi-Civit\`a connection of the metric $g$ on $M$.
\end{lemma}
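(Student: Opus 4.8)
The plan is to compute the second fundamental form of $H$ in the conformally rescaled metric $\sigma^2 g$ directly from its defining formula $\II^H_\sigma(X,Y) = \tilde g(\tilde\nu, \tilde\nabla_X Y)$, using the classical transformation rule for the Levi-Civita connection under a conformal change of metric. The key inputs are: (a) the unit normal for $\sigma^2 g$ is $\tilde\nu = \frac1\sigma \nu$ (given in the statement and immediate from $\sigma^2 g(\frac1\sigma\nu,\frac1\sigma\nu) = g(\nu,\nu) = 1$); and (b) if we write $\sigma = e^{\varphi}$, the Koszul formula gives
\begin{equation*}
   \tilde\nabla_X Y \seq \nabla^M_X Y + (X\varphi) Y + (Y\varphi) X - g(X,Y)\,\nabla^M\varphi\,,
\end{equation*}
equivalently, in terms of $\sigma$ itself,
\begin{equation*}
   \sigma\,\tilde\nabla_X Y \seq \sigma\,\nabla^M_X Y + (X\sigma) Y + (Y\sigma) X - g(X,Y)\,\nabla^M\sigma\,.
\end{equation*}

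First I would recall (or quote from a standard reference such as \cite{Petersen}) this conformal connection formula, or alternatively derive it in one line from the Koszul formula by writing $\tilde g = \sigma^2 g$ and expanding. Then I would substitute into the definition of $\II^H_\sigma$: for $X, Y$ tangent to $H$,
\begin{equation*}
   \II^H_\sigma(X,Y) \seq \sigma^2 g\Big(\tfrac1\sigma \nu,\ \tilde\nabla_X Y\Big)
   \seq \sigma\, g\big(\nu,\ \sigma\,\tilde\nabla_X Y\big) / \sigma
\end{equation*}
— more carefully, $\II^H_\sigma(X,Y) = \sigma^2 g(\frac1\sigma\nu, \tilde\nabla_X Y) = \sigma\, g(\nu, \tilde\nabla_X Y)$, and then using the displayed formula for $\sigma\,\tilde\nabla_X Y$ divided by $\sigma$. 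Pairing against $\nu$ and using that $g(\nu, X) = g(\nu, Y) = 0$ because $X, Y$ are tangent to $H$, the two middle terms $(X\sigma)Y + (Y\sigma)X$ drop out, leaving
\begin{equation*}
   \II^H_\sigma(X,Y) \seq \sigma\, g(\nu, \nabla^M_X Y) - g(X,Y)\, g(\nu, \nabla^M\sigma) \seq \sigma\,\II^H(X,Y) - g(\nu,\nabla^M\sigma)\, g(X,Y)\,,
\end{equation*}
which is exactly the claimed identity.

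There is really no serious obstacle here; the only point requiring a little care is bookkeeping the factors of $\sigma$ correctly — in particular remembering that the second fundamental form is the $\nu$-component (or $\tilde\nu$-component) of $\nabla_X Y$ and that rescaling the metric rescales both the inner product and the normalization of the normal field, so the net effect on $\II$ is the single factor $\sigma$ multiplying $\II^H$ plus the new gradient term. I would also note that the formula is local and pointwise in $X, Y$, so it suffices to verify it for vector fields tangent to $H$ near a given point, and that smoothness of $\sigma$ guarantees $\nabla^M\sigma$ is well defined. This completes the proof.
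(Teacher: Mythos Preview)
Your proposal is correct and follows essentially the same route as the paper: both derive (or quote) the conformal transformation law for the Levi--Civita connection from the Koszul formula, then substitute into $\II^H_\sigma(X,Y) = \sigma^2 g\big(\tfrac1\sigma\nu,\tilde\nabla_X Y\big) = \sigma\,g(\nu,\tilde\nabla_X Y)$ and use $g(\nu,X)=g(\nu,Y)=0$ to kill the two middle terms. The only cosmetic difference is that you pass through the substitution $\sigma=e^{\varphi}$, whereas the paper works directly with $\sigma$; the computations are otherwise line-for-line the same.
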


\begin{proof}
   This formula is well-known. We include a proof to make our paper
   more self-contained. Let $\nabla^\sigma$ be the Levi-Civit\`a connection of the 
   metric $\sigma^2g$. By Koszul's formula \cite[page 25]{Petersen} we have, for 
   all vector fields $X$, $Y$, $Z$ on $M$,
   \begin{eqnarray*}
   2g(\nabla_XY,Z) & = & X(g(Y,Z))+Y(g(X,Z))-Z(g(X,Y)) \\
   & & -g([Y,X],Z)-g([X,Z],Y)-g([Y,Z],X)
   \end{eqnarray*}
   and
   \begin{align*}
      2\sigma^2g(\nabla^\sigma_XY,Z) & \seq  
      X(\sigma^2g(Y,Z))+Y(\sigma^2g(X,Z))-
      Z(\sigma^2g(X,Y))\\
      & \ \ 
      -\sigma^2g([Y,X],Z)-\sigma^2g([X,Z],Y)-
      \sigma^2g([Y,Z],X) \\
      & \seq 2 \big (\sigma^2g(\nabla_XY,Z)
      +\sigma g(\nabla\sigma,X)g(Y,Z)
      +\sigma g(\nabla\sigma,Y)(X,Z)-
      \sigma g(\nabla\sigma,Z)(X,Y) \big),
   \end{align*}
   so
   $$\nabla^\sigma_XY=\nabla_XY
   +\frac1\sigma\left(g(\nabla\sigma,X)Y + 
   g(\nabla\sigma,Y)X-g(X,Y)\nabla\sigma\right).$$
    
   On the other hand, we have, for $X$, $Y$ tangent to $H$,
    $$\II^H(X,Y)=g(\nu,\nabla_XY),$$
    $$\II^H_\sigma(X,Y)=\sigma^2g\left(\frac1\sigma\nu,\nabla^\sigma_XY\right),$$
    and so
     $$\II^H_\sigma(X,Y)=\sigma g\left(\nu,\nabla_XY
   -\frac1\sigma g(X,Y)\nabla\sigma\right)
   =\sigma\II^H(X,Y)-g\left(\nu,\nabla\sigma\right)g(X,Y)\,,$$
   as claimed.
\end{proof}

\subsection{Sobolev spaces and well-posedness of the Poisson problem}
\label{ssec.cov}
We now recall the definition of $L^2$-type Sobolev spaces on manifolds
with boundary and relative bounded geometry and discuss the 
Poincar\'e inequality and its consequences. Thus \emph{for the rest of the 
paper, $\manifb$ will denote a manifold with boundary and relative bounded 
geometry and $M$ will be a (boundaryless) manifold with 
bounded geometry containing $\manifb$, as in Definition \ref{def_bdd_geo}.} 
We agree that $\pa \manifb \subset \manifb$ (so $\manifb$ is \emph{not} an 
open subset of $M$).

Let $s \in \ZZ_{+}$. We can define the space $H^s(M; g)$ as the domain of 
$(1 - \Delta_{g})^{s/2}$, where $\Delta_s := -d^*d$ is the 
\emph{non-positive} Laplacian on $M$. Equivalently, $H^{s}(M; g) =
\{u \mid \nabla^{j} u \in L^{2}(M; g), \forall \, j \le k\}$.
We let $H^s(M; g) := H^{-s}(M; g)^*$ if $-s \in \NN$.
Let $G \subset M$ be an open subset or $G = \manifb$,
then we define $H^s(G; g)$ to be the space of restrictions distributions 
in $H^s(M; g)$ to $G$, for $s \in \ZZ$. For $s \in \RR\smallsetminus \ZZ$,
we define $H^s(G; g)$ by interpolation between nearby integers.
We drop the metric $g$ from the notation when it is clear from the context.
Let $\nu$ be the inner unit normal vector field of $\partial \manifb$.
We have the following result \cite{GrosseSchneider, TriebelBG}.

\begin{theorem}[Trace theorem]\label{thm.trace}
   Let $\manifb$ be a manifold with boundary and relative bounded geometry. 
   Then, for every $s > 1/2$, the restriction $\text{res}\colon
   \CIc(\manifb) \to \CIc(\pa \manifb)$ extends by continuity to a
   surjective map
   \begin{equation*}
      \operatorname{res}\colon  H^s(\manifb; g) \, \to \, 
      H^{s-\frac{1}{2}}(\partial \manifb; g).
   \end{equation*}
\end{theorem}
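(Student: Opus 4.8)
The plan is to prove the theorem by localisation, reducing to the classical Euclidean trace theorem on a half-space, with all constants controlled uniformly thanks to the bounded geometry hypotheses recalled above.

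First I would set up a uniform collar. By Definition \ref{def.hSurfaceBG} the normal exponential map $\exp^\perp$ restricts to a diffeomorphism of $\pa\manifb \times [0, r_H)$ onto a neighbourhood $\maU$ of $\pa\manifb$ in $\manifb$, and in the resulting Fermi coordinates $(y, t) \in \pa\manifb \times [0, r_H)$ the metric takes the form $g = dt^2 + g_t$, where $g_t$ is a one-parameter family of metrics on $\pa\manifb$ with $g_0 = g|_{\pa\manifb}$. The hypotheses of Definition \ref{def.hSurfaceBG} (boundedness of all $(\nabla^{\pa\manifb})^k \II^{\pa\manifb}$) together with the bounded geometry of $M$ guarantee that $g_t$ and all its $t$- and tangential covariant derivatives are uniformly bounded on $\maU$ and uniformly comparable to $g_0$. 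Since $\pa\manifb$ has bounded geometry, I would then fix a uniformly locally finite cover of $\pa\manifb$ by normal-coordinate balls $B_j$ of a fixed small radius, with a subordinate partition of unity $\{\phi_j\}$ satisfying $\sup_j \|\phi_j\|_{C^k} < \infty$ for every $k$; the products $B_j \times [0, r_H/2)$ then furnish boundary charts of $\manifb$ in which, by the previous step, the metric is uniformly comparable (with all derivatives) to the flat metric on a Euclidean half-ball, so that the norm of $H^s(\manifb; g)$ restricted to such a chart is uniformly equivalent to the standard $H^s(\RR^d_+)$ norm read off in the chart. I complete the cover by interior charts, on which every $u \in \CIc(\manifb)$ has vanishing trace.

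With this in place, boundedness of $\res$ follows by decomposing $u = \sum_j \psi_j u$ for a partition of unity $\{\psi_j\}$ on $\manifb$ adapted to the cover, discarding the interior terms, transporting each boundary term to $\RR^d_+$, applying the classical estimate $\|\res v\|_{H^{s-1/2}(\RR^{d-1})} \le C \|v\|_{H^s(\RR^d_+)}$ (valid for all $s > 1/2$, e.g.\ by the Fourier-transform computation in the tangential variables), and summing the squares using the uniform finiteness of the cover together with the standard equivalence $\sum_j \|\psi_j u\|_{H^s(\manifb)}^2 \asymp \|u\|_{H^s(\manifb)}^2$ on bounded geometry manifolds. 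For surjectivity, given $h \in H^{s-1/2}(\pa\manifb; g)$ I would write $h = \sum_j \phi_j h$, transport $\phi_j h$ to $\RR^{d-1}$, apply the classical bounded extension operator $E \colon H^{s-1/2}(\RR^{d-1}) \to H^s(\RR^d_+)$ (the Poisson-type Fourier-multiplier lifting), multiply by a fixed cutoff in $t$ supported in $[0, r_H/2)$, transport back to $\manifb$, and set $u := \sum_j u_j$; uniform local finiteness and the uniform operator bounds yield $u \in H^s(\manifb; g)$ with $\res u = h$ and $\|u\|_{H^s} \le C\|h\|_{H^{s-1/2}}$. The case $s \notin \ZZ$ then follows since the Euclidean statement interpolates and the spaces $H^s(\manifb; g)$, $H^{s-1/2}(\pa\manifb; g)$ were defined by interpolation between integer orders.

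The main obstacle is the middle step: showing that in the boundary charts above the intrinsic norm of $H^s(\manifb; g)$ — which for integer $s$ is defined globally via $(1 - \Delta_g)^{s/2}$, and for non-integer $s$ by interpolation — is equivalent to the Euclidean half-space norm with constants depending only on the bounded geometry data and not on $j$. This needs (a) control of the Fermi-coordinate metric coefficients and all their derivatives from the bounds on $\II^{\pa\manifb}$ and the curvature of $M$ (via the Jacobi-field description of $g_t$), and (b) the standard but not entirely trivial fact that on a manifold with bounded geometry the globally defined $H^s$ norm is equivalent to a ``sum over a uniformly locally finite atlas'' norm. For a single compact piece of boundary both points are classical; the real content is the uniformity, which I would either carry out directly or invoke from \cite{GrosseSchneider, TriebelBG, Schick2001}, where these norm equivalences for bounded geometry manifolds with boundary are established.
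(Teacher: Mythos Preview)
The paper does not give its own proof of this theorem: it is stated as a recalled result, with the sentence ``We have the following result \cite{GrosseSchneider, TriebelBG}'' immediately preceding the statement and no proof following it. So there is nothing to compare your argument against in the paper itself.

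That said, your sketch is essentially the strategy used in the cited references (uniform Fermi collar, uniformly locally finite boundary atlas, reduction to the Euclidean half-space trace/extension, and the bounded-geometry norm equivalences). You correctly identify the genuine technical point, namely the uniform equivalence of the global $H^s$ norm with the chart-localized norms, and you already note that this is precisely what \cite{GrosseSchneider, TriebelBG, Schick2001} establish. In short, your proposal is sound and coincides with the literature the paper defers to; the paper simply does not reproduce the argument.
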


Let $\pa_{\nu}$ be the normal derivative at the boundary (with respect to 
either outer or inner normal vectors). We then let $H_0^k(\manifb; g) :=
\cap_{j=0}^{k-1} \ker (\operatorname{res}\circ \pa_{\nu}^j)$ denote 
the joint kernel of the restrictions maps 
$\operatorname{res}\circ \pa_{\nu}^j$, for $0\leq j\leq k-1$. It is known 
that $\CIc(\manifb \smallsetminus \pa \manifb)$
is dense in $H_0^{k}(\manifb; g)$ and that $H^{-k}(\manifb; g)$ 
identifies with $H^k_0(\manifb; g)^*$, $k \in \NN$. See 
\cite{GrosseSchneider, TriebelBG}.

We next recall the (uniform) Poincar\'e inequality of \cite{AGN1, Sakurai}
and (as a consequence), the (uniform) well-posendess of the Dirichlet 
Laplacian on manifolds with finite width from 
\cite{AGN1}. To that end, we need first to recall the notion of manifold
with finite width.

\begin{definition}\label{def.finite.width}
  Let $(\manifb, g)$ be a Riemannian manifold with boundary 
  $\partial \manifb$.
  We say that $\manifb$ has \emph{finite width} if:
  \begin{enumerate}[(i)]
  \item $(\manifb, g)$ is a manifold with boundary and 
  relative bounded geometry and
  \item\label{cond.fw} The function $\manifb \ni x \to 
  \dist_{\manifb}(x, \pa \manifb)$ is bounded on $\manifb$.
  \end{enumerate}
\end{definition}

Condition \eqref{cond.fw} of the last definition is 
equivalent to the condition 
\begin{center}
``\,$\exists \, R >0$ such that 
$\manifb \subset \{x \in M \mid\, \exists y \in \pa \manifb,\, 
\dist_{\manifb}(x, y) < R\,\}$.''
\end{center}

We can now recall the following Poincar\'e inequality 
from \cite{AGN1, Sakurai}.

\begin{theorem}\label{thm_Poin_intro}
   Let $(\manifb, g)$ be a Riemannian manifold with \emph{finite width}. 
   Then there exists $0 < C_{\manifb} < \infty$ such that, for all 
   $f \in \CIc(\manifb)$ (thus $f = 0$ on the boundary of $\manifb$),
   \begin{equation*} 
      \Vert f\Vert _{L^{2}(\manifb)} \leq C_{\manifb} 
      \Vert d f\Vert _{L^{2}(\manifb)}\,.
   \end{equation*} 
\end{theorem}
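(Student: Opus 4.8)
The plan is to reduce the global inequality to a \emph{local, one-dimensional} estimate obtained by integrating along geodesics normal to the boundary, exploiting the fact that $\dist_\manifb(\cdot,\pa\manifb)$ is bounded by some $R>0$ (the finite width hypothesis). First I would recall that, since $\manifb$ is a manifold with boundary and relative bounded geometry, the normal exponential map $\exp^\perp$ of $\pa\manifb$ in the ambient bounded-geometry manifold $M$ is a diffeomorphism from $\pa\manifb\times[0,r_H)$ onto a collar neighborhood of the boundary, with uniform control on the metric coefficients in these Fermi (boundary normal) coordinates. The point of bounded geometry is precisely that these coefficients, and their derivatives, are bounded above and below by constants independent of the boundary point; in particular the Jacobian of $\exp^\perp$ is pinched between two positive constants on any fixed-width collar.

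The key steps, in order, would be: (1) Fix $f\in\CIc(\manifb)$. For each $x\in\manifb$, since $\dist_\manifb(x,\pa\manifb)<R$, choose a minimizing geodesic $\gamma_x:[0,\ell_x]\to\manifb$ with $\gamma_x(0)\in\pa\manifb$, $\gamma_x(\ell_x)=x$, unit speed, $\ell_x<R$; because $f$ vanishes on $\pa\manifb$, the fundamental theorem of calculus along $\gamma_x$ gives $|f(x)|=\big|\int_0^{\ell_x}\tfrac{d}{dt}f(\gamma_x(t))\,dt\big|\le \int_0^{\ell_x}|df(\gamma_x(t))|\,dt$, whence by Cauchy--Schwarz $|f(x)|^2\le R\int_0^{R}|df|^2\circ\gamma_x$. (2) Rather than handle every $x$ separately, the cleanest route is: work in the collar $\maC:=\exp^\perp(\pa\manifb\times[0,R'))$ for $R'$ slightly larger than $R$ (enlarging $M$ past $\pa\manifb$ if necessary so that such a collar exists — this is where relative bounded geometry and Lemma \ref{lemma.normalinjectivity} enter, guaranteeing a \emph{uniform} collar width), note that finite width forces $\manifb\subset\maC$, and in the Fermi coordinates $(y,t)\in\pa\manifb\times[0,R')$ write $F(y,t):=f(\exp^\perp(y,t))$, which satisfies $F(y,0)=0$. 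Then $|F(y,t)|^2\le t\int_0^t|\pa_t F(y,s)|^2\,ds\le R\int_0^{R'}|\pa_s F(y,s)|^2\,ds$. (3) Integrate this over $(y,t)$ against the volume form $J(y,t)\,d\mathrm{vol}_{\pa\manifb}(y)\,dt$; using the two-sided bound $c_1\le J\le c_2$ on the fixed collar (uniform by bounded geometry) and $|\pa_t F|\le |df|_g\circ\exp^\perp$ (again controlled by the uniform metric coefficients), one gets $\|f\|_{L^2(\manifb)}^2\le R^2\,(c_2/c_1)\,\|df\|_{L^2(\manifb)}^2$, with the constant depending only on $R$, $r_H$, and the bounded-geometry data. (4) Finally extend from $\CIc(\manifb\smallsetminus\pa\manifb)$ to all $f\in\CIc(\manifb)$ with $f|_{\pa\manifb}=0$ by the density statement recalled after Theorem \ref{thm.trace}, or simply observe the argument already applies to any $f\in H^1_0(\manifb)$.

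The main obstacle is step (2)--(3): making precise that one may work in a \emph{single} collar of uniform width on which the Fermi-coordinate volume Jacobian and the metric coefficients are uniformly two-sided bounded, with constants depending only on the bounded-geometry constants of $M$ and on $r_H$, and not on the (possibly infinite, disconnected) geometry of $\manifb$. This is exactly the content that relative bounded geometry is designed to provide — the uniform tubular-neighborhood theorem for bounded-geometry hypersurfaces — so I would invoke the corresponding structural results from \cite{AGN1} (the diffeomorphism $\exp^\perp$ on $\pa\manifb\times(-r_H,r_H)$ together with uniform $C^\infty$-bounds on the pullback metric) rather than reprove them. Everything else is the routine one-dimensional Poincaré computation along the normal direction. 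Since the statement is explicitly attributed to \cite{AGN1, Sakurai}, the proof here can reasonably be brief, citing those references for the uniform collar and presenting the normal-geodesic integration as the conceptual heart of the argument.
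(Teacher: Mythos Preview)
The paper does not prove this theorem at all: it is stated with the preamble ``We can now recall the following Poincar\'e inequality from \cite{AGN1, Sakurai}'' and then simply used. So there is no in-paper argument to compare against, and your proposal is already doing more than the paper does. That said, your sketch has a genuine gap that would need to be repaired before it could stand in for the cited references.

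The problem is in step~(2). You assert that one can work in a single Fermi collar $\maC=\exp^\perp(\pa\manifb\times[0,R'))$ with $R'>R$, that finite width forces $\manifb\subset\maC$, and that the Jacobian $J$ of $\exp^\perp$ is two-sided bounded on this collar. But the tubular-neighborhood theorem for bounded-geometry hypersurfaces (and Lemma~\ref{lemma.normalinjectivity}) only yields a diffeomorphism on $\pa\manifb\times[0,r_H)$, and there is no reason whatsoever for $r_H$ to exceed the width~$R$. A single ellipse already shows this: for $\{x^2/4+y^2\le 1\}$ in~$\RR^2$ the width is~$1$ (distance from the origin to the boundary), yet the inward normals from the vertices $(\pm 2,0)$ reach their focal points at distance $b^2/a=1/2$, so $r_H\le 1/2<1$. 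On $\pa\manifb\times[0,R')$ the map $\exp^\perp$ is then neither injective nor an immersion, and the Jacobian~$J$ actually vanishes along the focal set, so the two-sided bound $c_1\le J\le c_2$ you invoke in step~(3) is false. Consequently the chain of inequalities in~(3) breaks down precisely where $J$ degenerates: one cannot pass from $\int_0^{\tau(y)}|\pa_s F|^2\,ds$ back to $\int_0^{\tau(y)}|\pa_s F|^2\,J\,ds$ without a lower bound on~$J$.

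The correct arguments in \cite{AGN1, Sakurai} do integrate along normal geodesics, but only up to the cut distance $\tau(y)\le R$, using that $\exp^\perp$ restricted to $\{(y,t):0\le t<\tau(y)\}$ is a diffeomorphism onto the complement of the (measure-zero) cut locus. The Jacobian is then controlled \emph{one-sidedly} by Heintze--Karcher/Riccati comparison (from the curvature and second-fundamental-form bounds), and the resulting \emph{weighted} one-dimensional Poincar\'e inequality on $[0,\tau(y)]$ with weight~$J$ is what replaces your unweighted estimate. This is where the actual analytic content lies, and it is not a consequence of the uniform collar lemma you cite. If you want to keep the write-up short, the honest thing is to cite \cite{AGN1, Sakurai} for the full proof (as the paper does) rather than sketch an argument that silently assumes $r_H>R$.
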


A regularity argument then yields the following result \cite{AGN1}.

\begin{theorem} \label{thm.well.posedness} 
   Let $\manifb$ be a smooth Riemannian manifold with smooth boundary 
   $\partial \manifb$ and finite width (Definition 
   \ref{def.finite.width}). Then $\Delta$ induces isomorphisms
   \begin{equation*}
      \Delta_D \seq \Delta \colon H^{m+1}(\manifb) \cap H_0^1(\manifb) 
      \to H^{m-1}(\manifb) \,, \quad m \in \ZZ_{+} := \{0, 1, \ldots \}\,.
   \end{equation*}
\end{theorem}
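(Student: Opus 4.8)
The plan is to deduce this elliptic regularity statement from the Poincaré inequality on manifolds with finite width (Theorem \ref{thm_Poin_intro}) together with the bounded geometry of $\manifb$ and $M$. First I would treat the base case $m=0$. By Theorem \ref{thm_Poin_intro}, the bilinear form $a(u,v) = \int_{\manifb} \langle du, dv\rangle \dvol$ is coercive on $H^1_0(\manifb; g)$: indeed $a(u,u) = \|du\|_{L^2}^2 \geq \tfrac{1}{1+C_{\manifb}^2}\|u\|_{H^1}^2$ for $u \in \CIc(\manifb)$, hence for all $u \in H^1_0(\manifb)$ by density. Since $a$ is clearly continuous on $H^1_0(\manifb)$, the Lax--Milgram lemma gives that $\Delta_D \colon H^1_0(\manifb) \to H^{-1}(\manifb) = H^1_0(\manifb)^*$ is an isomorphism. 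This settles $m = 0$.

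For $m \geq 1$ the content is \emph{elliptic regularity with uniform constants}: I must show that if $u \in H^1_0(\manifb)$ solves $\Delta u = f$ with $f \in H^{m-1}(\manifb)$, then $u \in H^{m+1}(\manifb)$ with $\|u\|_{H^{m+1}} \le C\|f\|_{H^{m-1}}$, the constant $C$ depending only on the bounded-geometry data (curvature bounds of $M$, injectivity radius, the normal-injectivity/second-fundamental-form bounds of $\partial\manifb$, and the finite-width constant $C_{\manifb}$), not on finer features of $\manifb$. The strategy is the standard localization argument adapted to bounded geometry. Using the bounded geometry of $M$ one fixes a uniform radius $\rho > 0$ and, via the normal exponential map $\exp^\perp$ near $\partial\manifb$ and ordinary geodesic balls in the interior, one covers $\manifb$ by charts $(\varphi_i \colon B_\rho \to M)$ (half-balls near the boundary, full balls in the interior) in which the metric and all its derivatives are controlled uniformly, with a uniformly finite cover multiplicity and a subordinate partition of unity $\{\chi_i\}$ with uniformly bounded derivatives. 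In each chart the problem becomes a Dirichlet (or interior) problem for a second-order elliptic operator with uniformly bounded smooth coefficients and uniform ellipticity constant on a fixed domain $B_\rho$ (or half-ball), where the classical interior and boundary Agmon--Douglis--Nirenberg / Schauder--$L^2$ estimates apply with a single constant. Summing the local estimates $\|\chi_i u\|_{H^{m+1}} \le C(\|\chi_i f\|_{H^{m-1}} + \|u\|_{H^m(\text{supp}\,\chi_i)})$ over $i$ and using the finite multiplicity gives $\|u\|_{H^{m+1}(\manifb)} \le C(\|f\|_{H^{m-1}(\manifb)} + \|u\|_{H^m(\manifb)})$. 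One then removes the lower-order term $\|u\|_{H^m}$ by induction on $m$, the base $m=0$ case giving $\|u\|_{H^1} \le C\|f\|_{H^{-1}}$ and hence (interpolating, or iterating) $\|u\|_{H^m} \le C\|f\|_{H^{m-1}}$. Combined with surjectivity of $\Delta_D$ on $H^{-1}$ from the $m=0$ step and the a priori estimate just established, a routine bootstrap shows $\Delta_D$ maps $H^{m+1}(\manifb) \cap H^1_0(\manifb)$ bijectively onto $H^{m-1}(\manifb)$ with bounded inverse.

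I expect the main obstacle to be the \emph{uniformity} in the boundary localization: verifying that the charts built from $\exp^\perp$ on $\partial\manifb \times (-\delta,\delta)$ (using that $\partial\manifb$ is a bounded geometry hypersurface in $M$, Definition \ref{def.hSurfaceBG}) genuinely have metric coefficients bounded in $C^k$ for every $k$ with constants independent of the point, and that the resulting flattened Dirichlet half-space problems have a uniform elliptic estimate. This is the technical heart; it is carried out in \cite{AGN1, Schick2001} (and the boundary $L^2$-regularity in \cite{GrosseSchneider, TriebelBG}), so I would invoke those results rather than redo the chart construction, and emphasize only that, because all the geometric data entering the constants are exactly those packaged in the definition of ``finite width,'' the resulting estimate constants are indeed uniform. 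The interior part of the argument reduces to the standard bounded-geometry Sobolev/elliptic theory on $M$ and is comparatively routine.
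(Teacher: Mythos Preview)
Your sketch is correct and follows the standard route (Lax--Milgram from the Poincar\'e inequality for $m=0$, then uniform local elliptic regularity in bounded-geometry charts for $m \ge 1$), which is precisely the argument of \cite{AGN1} that the paper is invoking. Note, however, that the paper itself does \emph{not} supply a proof of this theorem: it appears in the background section as a result recalled from the literature, with only the one-line indication ``A regularity argument then yields the following result \cite{AGN1}.'' So there is no in-paper proof to compare against; your proposal is essentially a faithful outline of the proof in the cited reference, and your identification of the uniform boundary-chart estimates as the technical core is accurate.
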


We shall need some of the following results.

\begin{examples}\label{ex.bdg_bdry}
   We have the following analogues of Example \ref{ex.bdg}:
   \begin{enumerate}[(i)]
      \item \label{item.ex.bbdg1}
      A bounded domain $G$ with smooth boundary in a 
      Riemannian manifold has relative bounded geometry.

      \item \label{item.ex.bbdg3} If $\manifb$ is a manifold with 
      boundary and relative bounded geometry 
      and $D$ is any discret space, then 
      $D \times \manifb$ is again a manifold with boundary and 
      relative bounded geometry. 

      \item \label{item.ex.bbdg4} If $M$ is a manifold with bounded geometry and 
      $\manifb$ is a manifold \emph{with boundary and relative} bounded geometry
      then $\manifb \times M$ is also a manifold \emph{with boundary and relative}  
      bounded geometry.
   \end{enumerate}
   On the other hand, let $1 > r_{n} \searrow 0$, $n \in \NN$. 
   Then $W := \cup_{n \in \NN} B_{r_{n}}^{\RR^{d}}(2n)$ is a manifold 
   with boundary but \emph{does not have relative bounded geometry.}
\end{examples}

In the following section, we will position ourselves to use Theorem
\ref{thm.well.posedness}.

\section{Uniform estimates}
\label{sec.ue}
Our first results are in the framework of Babu\v{s}ka-Kondratiev 
(or weighted Sobolev) spaces, which we introduce next, after recalling
the most important points of our setting (notation and assumptions) 
from the introduction. In this section,
we still work in an arbitrary dimension $d$, except in the 
last part, that is, in \ref{ssec.wellp}, where we will assume $d = 2$. 

\subsection{Babu\v{s}ka-Kondratiev (weighted Sobolev) spaces}
\label{ssec.BKspaces}
We consider $d \in \NN$ arbitrary and the setting 
described in the introduction in Section \ref{ssec.setting}.
See, especially, assumption \ref{assumpt.intro1}.
In particular, the sets $\maV_{n} = \{p_{1n}, p_{2n}, \ldots, p_{kN}\} 
\subset \RR^{d}$ and the functions $r_{n}(x) := 
\eta(\dist_{\RR^{d}}(x, \maV_{n}))$ are as introduced there. Also, 
the manifold $(\Mce, \hat g) \ede \cup_{n \in \indexSet} 
   (\{n\} \times (\RR^{d}\smallsetminus \maV_{n}), \hat g_{n})$,
   $\hat g_{n} \ede r_{n}^{-2} dx^2$,
is as in Proposition \ref{prop.Mce.bg}. 
We shall also consider 
domains $\Omega_{n} \subset \RR^{d}$ 
satisfying
   $\Omega_{n} \cap \maV_{n} \seq \emptyset\,.$
We let
\begin{equation*}\label{eq.def.U}
   U \ede \cup_{n \in \indexSet} \{n\} \times \Omega_{n} 
   \subset \Mce \subset \indexSet \times \RR^{d}\,,
\end{equation*}
be as in Assumption \ref{assumpt.fw}, that is, $U$ is the \emph{disjoint} 
union of the domains $\Omega_{n}$.

\begin{remark}\label{rem.BKspaces} 
   Let $G \subset \Mce$ be an open subset.
   The Babu\v{s}ka-Kondratiev spaces on $G$ associated to 
   the weight $r$ are then
   \begin{equation*}
         \maK_{a}^m (G; r) \ede \{\, u : G \to \CC \mid 
         r^{|\alpha| - a} \pa^\alpha u \in L^2(G, dx^{2})\,, 
         \ |\alpha| \le m\, \}\,.
   \end{equation*}
   (Compare to the Introduction.) We endow the spaces $\maK_{a}^m (G; r)$
   with the resulting norms. Recall that   
   $\maK_{a}^m (\Omega_{n}; r_{n}) \ede \{\, u : \Omega_{n} \to \CC \mid 
   r_{n}^{|\alpha| - a} \pa^\alpha u \in L^2(\Omega_{n})\,, \ |\alpha| \le m\, \}$
   (Definition \ref{def.BKspaces}).
   It follows that
   $$\maK_{a}^m (U; r) \simeq \oplus_{n \in \indexSet} \maK_{a}^m (\Omega_{n}; r_{n})\,,$$ 
   (Hilbert space direct sum). The same relation holds also for $\Mce$.
\end{remark}

The following lemma is also well-known and explains why we are
interested in Sobolev spaces on manifolds. The following lemma 
follows, for instance, from the fact that $r$ is an admissible 
weight (i.e. $dr/r \in W^{\infty, \infty}(M; TM)$, where $TM$
is endowed with the metric $\hat g$, see Lemma 
\ref{cor.admissible.rn}).

\begin{lemma} \label{lemma.K=H}
   Let $\Mce \subset \indexSet \times \RR^{d}$ 
   be as in Proposition \ref{prop.Mce.bg}, in particular, its 
   metric is $\hat g = r^{-2}dx^2$. Then 
   \begin{enumerate}[(i)]
      \item We have an isomorphism of 
      normed spaces $ 
         H^{m}(\Mce; \hat g) \simeq \maK_{d/2}(\Mce; r)
      $ 

      \item Let us also assume that $U \ede \cup_{n \in \indexSet} 
      \{n\} \times \Omega_{n} \subset \Mce$ is a manifold with boundary 
      and relative bounded geometry, then 
      \begin{equation*}
         H^m(U; \hat g) \seq \maK_{d/2}^m (U; r) \,.
      \end{equation*}
   \end{enumerate}
\end{lemma}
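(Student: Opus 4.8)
The plan is to prove Lemma \ref{lemma.K=H} by reducing both statements to the single-weight, single-index case, exploiting the direct sum decompositions
$\maK_{d/2}^m(U;r) \simeq \oplus_{n} \maK_{d/2}^m(\Omega_n; r_n)$ and
$H^m(U;\hat g) \simeq \oplus_n H^m(\Omega_n; \hat g_n)$ from Remark \ref{rem.BKspaces} and \eqref{eq.H=K}, so that it suffices to compare the norms \emph{uniformly in} $n$. For part (i), the point is that on $(\Mce,\hat g)$, with $\hat g = r^{-2}dx^2$, the Sobolev norm $\|u\|_{H^m(\Mce;\hat g)}^2 = \sum_{j\le m}\|\nabla^j u\|_{L^2(\Mce;\hat g)}^2$ should be comparable, with constants independent of $n$, to $\sum_{|\alpha|\le m}\|r^{|\alpha|-d/2}\pa^\alpha u\|_{L^2(\Mce; dx^2)}^2$, which is exactly the $\maK_{d/2}^m(\Mce;r)$ norm. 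First I would record the pointwise volume identity $\dvol_{\hat g} = r^{-d}\,dx$ in dimension $d$, which converts $L^2(\Mce;\hat g) = L^2(\Mce; r^{-d}dx)$; combined with the fact that on $(\Mce,\hat g)$ the orthonormal coframe is $\{r^{-1}dx_j\}$, so that a covariant derivative $\nabla^j$ expressed in these frames involves exactly the operators $(r\pa)^\alpha$ up to the Christoffel-symbol corrections.

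The key technical input is Corollary \ref{cor.admissible.rn} together with Lemma \ref{lemma.diff.ops}: these say precisely that $r^{|\alpha|-b}\pa^\alpha r^b$ and $r^{-b}(r\pa)^\alpha r^b$ lie in $L^\infty(\Mce)$ (uniformly in $n$), that $(r\pa)^\alpha$ is a $W^{\infty,\infty}(\Mce)$-combination of the $r^{|\beta|}\pa^\beta$ with $\beta\le\alpha$, and conversely. Since the Christoffel symbols of $\hat g=r^{-2}dx^2$ are built from $\pa_j r / r$ — which is in $W^{\infty,\infty}(\Mce)$, i.e. $r$ is an \emph{admissible weight} — one shows by induction on $j$ that $\nabla^j u$, measured in the $\hat g$-orthonormal frame, is a $W^{\infty,\infty}(\Mce)$-linear combination of $(r\pa)^\alpha u$ with $|\alpha|\le j$, and vice versa. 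Feeding this into the volume identity gives the two-sided norm equivalence
$\|u\|_{H^m(\Mce;\hat g)} \asymp \big(\sum_{|\alpha|\le m}\|r^{-d/2}(r\pa)^\alpha u\|_{L^2(dx)}^2\big)^{1/2} \asymp \|u\|_{\maK_{d/2}^m(\Mce;r)}$,
the last step again using Lemma \ref{lemma.diff.ops} to trade $(r\pa)^\alpha$ for $r^{|\alpha|}\pa^\alpha$, and all constants are uniform in $n$ because all the $L^\infty$ bounds are.

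For part (ii), once $U$ is assumed to be a manifold with boundary and relative bounded geometry isometrically sitting inside $\Mce$ (or inside some ambient bounded-geometry manifold), the space $H^m(U;\hat g)$ is by definition the space of restrictions to $U$ of elements of $H^m(\Mce;\hat g)$, while $\maK_{d/2}^m(U;r)$ is the space of restrictions of elements of $\maK_{d/2}^m(\Mce;r)$ — here I would invoke Remark \ref{rem.BKspaces} and the analogous statement for $H^m$. Since part (i) identifies the two ambient spaces with equivalent norms and the restriction maps are the obvious compatible surjections, the quotient norms agree, giving $H^m(U;\hat g) = \maK_{d/2}^m(U;r)$ as normed spaces. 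Alternatively, and more concretely, one can argue directly on each $\Omega_n$: the equivalence of norms established in (i) is a pointwise-coefficient statement (the comparison operators $r^{|\alpha|-d/2}\pa^\alpha r^{d/2}$ etc. are multiplication by bounded functions), hence it passes verbatim from $\Mce$ to any open subset, and then one sums over $n$.

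I expect the main obstacle to be the bookkeeping in the induction that expresses $\nabla^j u$ in terms of the $(r\pa)^\alpha u$ with $W^{\infty,\infty}$ coefficients — i.e. verifying that the Christoffel symbols of the conformal metric $r^{-2}dx^2$ and all their covariant derivatives are bounded on $\Mce$ (uniformly in $n$), which is exactly where admissibility of $r$ (Corollary \ref{cor.admissible.rn}) is used and must be chased carefully through the iterated Leibniz rule. The rest — the volume factor $r^{-d}dx$, the direct-sum reductions, and the passage from $\Mce$ to $U$ — is routine once that step is in place.
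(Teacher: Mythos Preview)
Your proposal is correct and follows essentially the same route as the paper: the paper also records the volume identity $\dvol_{\hat g}=r^{-d}dx$, identifies $H^m(\Mce;\hat g)$ with the space where $(r\pa)^\alpha u\in L^2(\Mce;\hat g)$ for $|\alpha|\le m$, and then invokes Lemma~\ref{lemma.diff.ops} to pass to $r^{|\alpha|}\pa^\alpha$. The only difference is that where you propose to carry out the induction on $\nabla^j$ versus $(r\pa)^\alpha$ by hand via the Christoffel symbols of $r^{-2}dx^2$, the paper instead cites Proposition~3.2 of \cite{GN17} (or Proposition~5.17(iv) of \cite{KohrNistor1}), which packages exactly that computation for any frame $\{r\pa_j\}$ forming a $W^{\infty,\infty}$-basis of $TM$.

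One point to watch in your part~(ii): $H^m(U;\hat g)$ is \emph{defined} in the paper as the space of restrictions from $H^m(\Mce;\hat g)$, whereas $\maK_{d/2}^m(U;r)$ is defined intrinsically, so your restriction argument needs the extra step that every $u\in\maK_{d/2}^m(U;r)$ extends to $\maK_{d/2}^m(\Mce;r)$. This is where the relative-bounded-geometry hypothesis on $U$ genuinely enters (via the Stein-type extension theorem on such manifolds, see \cite{GrosseSchneider}); the paper sidesteps this by citing the version of Proposition~3.2 in \cite{GN17} that is already formulated for manifolds with boundary. Your alternative ``pointwise'' argument on each $\Omega_n$ has the same hidden step: the pointwise norm equivalence gives you the \emph{intrinsic} $H^m$-norm on $U$, and you still need bounded geometry to identify that with the restriction norm.
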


\begin{proof} 
   The map 
   \begin{equation*} 
      W^{\infty, \infty}(\Mce)^{d} \ni 
      (f_{1}, f_{2}, \ldots, f_{d}) \to r(f_{1} \pa_{1} + f_{2}\pa_{1} + 
      \ldots f_{d} \pa_{d}) \in W^{\infty, \infty}(\Mce; \Mce)
   \end{equation*}
   is an isomorphism of Fréchet spaces. 
   In other words, the vector fields $r \pa_{j}$ form a topological 
   (or Fr\'echet) basis of all the vector fields on $\Mce$ with bounded 
   covariant derivatives. Hence the assumptions of Proposition 3.2 of 
   \cite{GN17} or Proposition 5.17(iv)
   \cite{KohrNistor1} are satisfied. (These vectors are also an orthonormal basis 
   at every point of $\Mce$, which is more than is required by these propositions.)
   Since $\Mce$ has bounded geometry,
   by Proposition \ref{prop.Mce.bg}, these propositions then give (the intuitively 
   obvious) first relation of the following sequence of equalities
   \begin{align*}
      H^{m}(\Mce; \hat g) & \seq \{u : \Mce \to \CC \mid (r \pa)^{\alpha} u 
      \in L^{2}(\Mce; \hat g)\,,\ \forall \, |\alpha| \le m \} \\
      & \seq \{u : \Mce \to \CC \mid 
      r^{-d/2}(r \pa)^{\alpha} u \in L^{2}(\Mce; dx^{2}) 
      \,,\ \forall \, |\alpha| \le m \} \\
      & 
      \seq \{u : \Mce \to \CC \mid 
      r^{|\alpha|-d/2}\pa^{\alpha} u \in L^{2}(\Mce; dx^{2})
      \,,\ \forall \, |\alpha| \le m \} \ =:\
      \maK_{d/2}^{m}(\Mce; r)\,.
   \end{align*}
   The second equality above follows from the fact that the volume form with 
   respect $\hat g$ is $r^{-d}$ times the volume form for $dx^{2}$ 
   and the third equality above follows from the Lemma \ref{lemma.diff.ops}.
   All the maps are continuous, and hence, by Banach's theorem, they are 
   isomorphisms of the corresponding normed spaces. The second part is proved
   in exactly the same way, but using only Proposition 3.2 of \cite{GN17}
   (which was proved for manifolds with boundary and relative bounded geometry)
   and the assumption that $U$ has relative bounded geometry (instead of 
   Proposition \ref{prop.Mce.bg}).
\end{proof}

See also \cite{AGN_CR}. The first part also follows immediately 
from the results in \cite{sobolev} because $\Mce$ is a Lie manifold.
In the same spirit, we shall need the following lemma (which is 
well-known for bounded domains).

\begin{lemma}\label{lemma.shift.K}
   Let $a, b \in \RR$. Then multiplication by $r^b$ induces an 
   isomorphism
   \begin{equation*} 
      r^b : \maK_{a}^m(U; r) \to \maK_{a+b}^m(U; r)\,,
   \end{equation*}
   whose norm depends continuously on $b$.
\end{lemma}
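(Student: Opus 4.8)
The statement is that multiplication by $r^b$ is an isomorphism $\maK_{a}^m(U; r) \to \maK_{a+b}^m(U; r)$ with norm depending continuously on $b$. The plan is to reduce to the already-established Corollary \ref{cor.admissible.rn} (about the boundedness and smooth $b$-dependence of the symbols $r^{-c}(r\pa)^\alpha r^c$ and $r^{|\alpha|-c}\pa^\alpha r^c$), and then to a straightforward Leibniz computation. First I would observe that the inverse of ``multiplication by $r^b$'' is ``multiplication by $r^{-b}$,'' so it suffices to prove that $r^b$ maps $\maK_{a}^m(U; r)$ \emph{continuously} into $\maK_{a+b}^m(U; r)$ with operator norm bounded by a continuous function of $b$; applying this with $b$ replaced by $-b$ then gives a two-sided bound, hence the isomorphism and the continuous dependence of the inverse's norm as well. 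By the direct-sum decomposition $\maK_{a}^m(U;r) \simeq \oplus_{n} \maK_{a}^m(\Omega_{n}; r_{n})$ (Remark \ref{rem.BKspaces}), and since $\Omega_n \subset M_n \subset \Mce$, it is enough to prove the bound on $\Mce$ with a constant uniform in $n$; restriction to each $\Omega_n$ only decreases norms.

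Next, the core estimate. Fix $u \in \maK_{a}^m(U;r)$ and a multi-index $\alpha$ with $|\alpha| \le m$. I would write, using the definition of the norm on $\maK_{a+b}^m$, that the relevant quantity to control is $r^{|\alpha| - a - b}\pa^\alpha(r^b u)$ in $L^2(U; dx^2)$. Expanding by the Leibniz rule,
\begin{equation*}
   r^{|\alpha| - a - b}\pa^\alpha(r^b u) \seq \sum_{\beta \le \alpha} \binom{\alpha}{\beta} \big( r^{|\alpha-\beta| - b}\pa^{\alpha - \beta} r^b \big) \, r^{|\beta| - a}\pa^\beta u \,.
\end{equation*}
By Corollary \ref{cor.admissible.rn}, each coefficient $r^{|\alpha - \beta| - b}\pa^{\alpha - \beta} r^b$ lies in $L^\infty(\Mce)$ with a sup-norm that depends continuously (indeed smoothly) on $b$; call a common continuous upper bound for these finitely many sup-norms $\Phi_m(b)$. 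Since $r^{|\beta| - a}\pa^\beta u \in L^2(U; dx^2)$ with norm at most $\|u\|_{\maK_a^m(U;r)}$ for each $\beta \le \alpha$, the triangle inequality gives $\|r^{|\alpha|-a-b}\pa^\alpha(r^b u)\|_{L^2} \le C_m \Phi_m(b) \|u\|_{\maK_a^m(U;r)}$ with $C_m$ depending only on $m$ and $d$ (from the binomial coefficients and the number of multi-indices). Summing over $|\alpha| \le m$ yields $\|r^b u\|_{\maK_{a+b}^m(U;r)} \le C_m' \Phi_m(b)\|u\|_{\maK_a^m(U;r)}$, which is exactly the desired continuous-in-$b$ bound.

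Finally, assembling: applying the displayed bound with $b \rightsquigarrow -b$ (and $a \rightsquigarrow a+b$) shows multiplication by $r^{-b}$ maps $\maK_{a+b}^m(U;r)$ boundedly back to $\maK_a^m(U;r)$, again with norm $\le C_m'\Phi_m(-b)$. These two maps are mutually inverse, so $r^b$ is an isomorphism of normed spaces, and both its norm and the norm of its inverse are bounded by continuous functions of $b$. I do not expect any serious obstacle here: the only subtlety is making sure the constants in the Leibniz expansion are genuinely $n$-independent, but that is immediate because the coefficient functions live on the common ambient manifold $\Mce$ (where the uniformity was already built into Corollary \ref{cor.admissible.rn}) and restriction to $\Omega_n$ is norm-nonincreasing. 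If one wants to be slightly more careful about the case $b \notin \ZZ$ or $a \notin \ZZ$ for the \emph{lower-order} bookkeeping, note that $m \in \ZZ_+$ and all exponents of $r$ appearing are real, so no difficulty arises — $r$ is a fixed positive smooth function and its real powers are honest smooth functions away from $\maV_n$, which is exactly the set excluded from $U$.
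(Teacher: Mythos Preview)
Your argument is essentially the paper's own: the same Leibniz expansion
\begin{equation*}
   r^{|\alpha|-a-b}\pa^{\alpha}(r^{b}u) \seq \sum_{\beta\le\alpha}\binom{\alpha}{\beta}\bigl(r^{|\alpha-\beta|-b}\pa^{\alpha-\beta}r^{b}\bigr)\,r^{|\beta|-a}\pa^{\beta}u
\end{equation*}
combined with Corollary~\ref{cor.admissible.rn}, followed by the observation that multiplication by $r^{-b}$ is the inverse. One small point: what you have actually proved is that the operator norm of $r^{b}$ is \emph{bounded above by a continuous function} $C_m'\Phi_m(b)$, which is slightly weaker than the norm itself being a continuous function of $b$ (your bound satisfies $C_m'\Phi_m(0)>1$ in general, so it does not pin the norm near $1$ as $b\to0$). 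The paper closes this by noting that in the Leibniz sum only the term $\beta=\alpha$ survives at $b=0$ (the others carry a factor of $b$, since $\pa^{\gamma}r^{b}$ with $|\gamma|\ge1$ is $O(b)$), yielding $\|r^{b}u\|_{\maK_{a+b}^m}\le(1+C|b|)^{1/2}\|u\|_{\maK_a^m}$ and symmetrically the reverse, whence continuity of the norm at $b=0$ and then, by reparametrising in $a$, at every $b$. For the applications in the paper (local uniform boundedness in $b$) your version already suffices.
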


\begin{proof} 
   This also follows from Corollary \ref{cor.admissible.rn}.
   Indeed, the norm $u \in \maK_{a}^m(U; r)$ is 
   \begin{equation*}
      \|u\|_{\maK_{a}^m(U; r)}^{2} \ede \sum_{|\alpha| \le m}
      \|r^{|\alpha| - a} \pa^\alpha u \|_{L^2(U; dx^{2})}^{2}\,.
   \end{equation*}
   We have 
   \begin{equation*}
      r^{|\alpha| -a -b} \pa^\alpha (r^{b}u) \seq
      \sum_{\beta \le \alpha} {\alpha \choose \beta}
      r^{|\alpha-\beta|-b} \big ( \pa^{\alpha - \beta} r^{b} \big )
      r^{|\beta| - a} \pa^{\beta} u\,. 
   \end{equation*}
   Let us write $A \lesssim B$ when there is a $C > 0$
   independent of the relevant data (independent of $u$ 
   in the case below) such that $A \le CB$.
   Since $r^{|\alpha-\beta|-b} \pa^{\alpha- \beta} r^{b} \in 
   L^{\infty}(\Mce)$, by Corollary \ref{cor.admissible.rn}, 
   we successively obtain
   \begin{align*}
      \|r^{b} u\|_{\maK_{a+b}^m(U; r)}^{2} & \ede \sum_{|\alpha| \le m}
      \|r^{|\alpha| - a-b} \pa^\alpha (r^{b}u) \|_{L^2(U;  dx^{2})}^{2}\\
      & \ \lesssim\  \sum_{\beta \le \alpha} 
      \|r^{|\alpha-\beta|-b} \pa^{\alpha- \beta} r^{b}\|_{L^\infty}
      \|r^{|\beta| - a} \pa^{\beta} u\|_{L^2}
      \ \lesssim\  \|u\|_{\maK_{a}^m(U; r)}^{2}\,.
   \end{align*}
   Using the continuity in $b$ in Corollary \ref{cor.admissible.rn}
   and by separating the terms with $\beta = 0$ from the other ones,
   the same calculation gives
   \begin{equation*}
      \|r^{b} u\|_{\maK_{a+b}^m(U; r)}^{2} \le \|u\|_{\maK_{a}^m(U; r)}^{2}
      + C |b|\,.
   \end{equation*}
   (because all the terms with $\beta \neq 0$ contain a factor of $b$),
   with $C$ independent of $a$ (but possibly depending on $\alpha$).
   Replacing $u$ with $r^{-b}u$, using the continuity of multiplication 
   with $r^{-b}$ and then adjusting the indices, we
   obtain 
   \begin{equation*}
      \|u\|_{\maK_{a}^m(U; r)}^{2} \le \|r^{b} u\|_{\maK_{a+b}^m(U; r)}^{2}
      + C |b|\,,
   \end{equation*}
   and hence the continuity of the norm at $b=0$. The continuity at other 
   values follow from the continuity at 0 by reparametrization in $a$ and 
   $b$.
\end{proof}

\subsection{Uniform estimates in Babu\v{s}ka-Kondratiev spaces}
\label{ssec.wellp}

We now turn to the precise statements and proofs of Theorem \ref{thm.wellp1}
and its Corollary \ref{cor.unif1}, which will be, to a large extent, 
applications of the well-posedness result in the previous section, namely 
Theorem \ref{thm.well.posedness}.  
\textbf{From now on $d = 2$.
In addition to Assumption \ref{assumpt.intro1}, 
we also assume from now on Assumption \ref{assumpt.fw},}
as well as all the notation in Section \ref{ssec.setting}.
We are ready now to prove our first main result, which is 
Theorem \ref{thm.wellp1} for $a = 0$. Its assumptions 
are the usual ones, as recalled in its statement next.

\begin{theorem}\label{thm.global.isomorphism}
   We continue to assume \ref{assumpt.intro1} and \ref{assumpt.fw}, 
   in particular, the manifold with boundary $(U, \hat g) := \cup_{n \in \indexSet} 
   (\Omega_{n}, r_{n}^{-2}dx^2)$ has finite width (Definition \ref{def.finite.width}).
   Let us also assume $d = 2$, so $\Omega_{n} \subset \RR^{2}$,
   then the Dirichlet Laplacian induces isomorphisms
   \begin{equation*}
      \Delta_{D, m} \ede \Delta : \maK_{1}^{m+1}(U; r) 
      \cap \{ u \vert_{\pa U} = 0\} 
      \to \maK_{-1}^{m-1}(U; r)\,, \quad m \in \ZZ_+\,.
   \end{equation*}
\end{theorem}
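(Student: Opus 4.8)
The plan is to deduce Theorem~\ref{thm.global.isomorphism} from the well-posedness result on manifolds with finite width, Theorem~\ref{thm.well.posedness}, by translating the statement from the Babu\v{s}ka--Kondratiev (weighted Sobolev) spaces into the usual Sobolev spaces on $(U,\hat g)$. The key observation is that, since $d = 2$, the conformal weight $r^{-2}$ appears in the metric $\hat g = r^{-2}dx^2$ in exactly the degree that makes $\maK_{d/2}^{m}(U;r) = \maK_{1}^{m}(U;r) = H^{m}(U;\hat g)$; this is precisely Lemma~\ref{lemma.K=H}(ii), which applies because $(U,\hat g)$ has finite width and hence relative bounded geometry. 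The first step is therefore to record the identifications
\[
   \maK_{1}^{m+1}(U;r) \seq H^{m+1}(U;\hat g)\,, \qquad
   \maK_{1}^{m-1}(U;r) \seq H^{m-1}(U;\hat g)\,,
\]
as \emph{topological} isomorphisms, so that norm estimates transfer back and forth with uniform constants.

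The second step handles the target space, which involves $\maK_{-1}^{m-1}$ rather than $\maK_{1}^{m-1}$. Here I would invoke Lemma~\ref{lemma.shift.K} with $b = -2$: multiplication by $r^{-2}$ gives an isomorphism $\maK_{1}^{m-1}(U;r) \to \maK_{-1}^{m-1}(U;r)$ (whose norm is controlled since the relevant $n$-dependent constants are uniform by Corollary~\ref{cor.admissible.rn}). Composing, $\maK_{-1}^{m-1}(U;r) \simeq r^{-2} H^{m-1}(U;\hat g)$. The point of the factor $r^{-2}$ is that it is exactly the conformal factor relating the Euclidean Laplacian $\Delta = \pa_1^2 + \pa_2^2$ to the Laplace--Beltrami operator $\Delta_{\hat g}$ of $\hat g$ in dimension two: for a conformal metric $\hat g = \sigma^2 dx^2$ with $\sigma = r^{-1}$ and $d = 2$, one has $\Delta_{\hat g} = \sigma^{-2}\Delta = r^{2}\Delta$. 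Thus the Euclidean Dirichlet Laplacian $\Delta_{D}$ acting $\maK_{1}^{m+1}\cap\{u|_{\pa U}=0\} \to \maK_{-1}^{m-1}$ is, after the identifications above, literally the same operator as $r^{-2}\Delta_{\hat g}$ acting $H^{m+1}(U;\hat g)\cap H_0^1(U;\hat g) \to r^{-2}H^{m-1}(U;\hat g)$, i.e. conjugate to the Laplace--Beltrami Dirichlet problem on $(U,\hat g)$.

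The third step is then immediate: by Assumption~\ref{assumpt.fw}, $(U,\hat g)$ is a manifold with smooth boundary and finite width, so Theorem~\ref{thm.well.posedness} tells us that $\Delta_{\hat g} \colon H^{m+1}(U;\hat g)\cap H_0^1(U;\hat g) \to H^{m-1}(U;\hat g)$ is an isomorphism for every $m \in \ZZ_+$. (One should note that $H_0^1(U;\hat g)$ is, under Lemma~\ref{lemma.K=H}(ii) and the trace theorem, the same as $\maK_{1}^{1}(U;r)\cap\{u|_{\pa U}=0\}$, so the boundary condition matches.) Chaining the isomorphisms
\[
   \maK_{1}^{m+1}(U;r)\cap\{u|_{\pa U}=0\} \;\xrightarrow{\ \sim\ }\; H^{m+1}(U;\hat g)\cap H_0^1(U;\hat g)
   \;\xrightarrow[\ \sim\ ]{\Delta_{\hat g}}\; H^{m-1}(U;\hat g) \;\xrightarrow[\ \sim\ ]{r^{-2}}\; \maK_{-1}^{m-1}(U;r)
\]
and checking that the composite is (a nonzero scalar multiple of) the Euclidean $\Delta_{D}$ completes the argument.

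The main obstacle I anticipate is not conceptual but bookkeeping: one must verify carefully that the conformal change of Laplacian in dimension two really does produce the clean identity $\Delta_{\hat g} = r^{2}\Delta$ with no lower-order error terms (this is genuinely special to $d = 2$, and is why the hypothesis $d = 2$ is imposed here), and that the boundary condition $u|_{\pa U} = 0$ is preserved under all the identifications, using the trace theorem (Theorem~\ref{thm.trace}) and the density statement for $H_0^1$. A secondary point requiring care is that all the identifying isomorphisms come with constants \emph{uniform in} $n\in\indexSet$ — this is guaranteed by the uniform admissibility of the weight $r$ (Corollary~\ref{cor.admissible.rn}) and by the fact that the finite-width constant in Theorem~\ref{thm.well.posedness} is a single constant for the whole manifold $U$, not a family of $n$-dependent ones — but it is worth stating explicitly, since uniformity is the entire purpose of the theorem.
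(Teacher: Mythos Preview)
Your proposal is correct and follows essentially the same route as the paper's own proof: identify $\maK_{1}^{m}(U;r)$ with $H^{m}(U;\hat g)$ via Lemma~\ref{lemma.K=H} (using $d/2=1$), use the two-dimensional conformal identity $\Delta_{\hat g}=r^{2}\Delta$, apply Theorem~\ref{thm.well.posedness} on the finite-width manifold $(U,\hat g)$, and absorb the factor $r^{-2}$ with Lemma~\ref{lemma.shift.K}. Two small remarks: the composite is \emph{exactly} $\Delta_D$, not merely a scalar multiple, since $r^{-2}\Delta_{\hat g}=r^{-2}r^{2}\Delta=\Delta$; and your closing paragraph about uniformity in $n$ is not needed here (the theorem is a single statement on the manifold $U$), though it becomes relevant in the subsequent Corollary~\ref{cor.isomorphisms}.
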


\begin{proof}
   Let $\Delta_{\hat g}$ be the Laplace operator on $U$ associated to the 
   metric $\hat g$. Let $H^j(U, \hat g)$ be the Sobolev space of order $j$ defined by 
   the metric $\hat g$, as before. It follows from Theorem \ref{thm.well.posedness} 
   that, for any $m \in \ZZ_{+}$, 
   \begin{equation}\label{eq.isom.H}
      \Delta_{\hat g} : H^{m+1}(U, \hat g) \cap H_0^1(U; \hat g) \to H^{m-1}(U, \hat g)\,,
   \end{equation}
   is an isomorphism. As in Lemma \ref{lemma.K=H}, we have isomorphisms
   \begin{equation}\label{eq.conf.mod}
      H^{m}(U, \hat g) \simeq \maK_{1}^m(U; r)\,,
   \end{equation}
   the index one in $\maK_{1}^m$ is half of the ambient dimension 
   (which is 2). 
   We also have that $\Delta_{\hat g} = r^2 \Delta_{D, m}$ (this is specific to 
   dimension 2). The proof is completed by combining the isomorphisms 
   of equations \eqref{eq.isom.H} and \eqref{eq.conf.mod} with that 
   of Lemma \eqref{lemma.shift.K} with the relation $\Delta_{\hat g} = 
   r^2 \Delta_{D, m}$.
\end{proof}

Given the direct sum decomposition 
$\maK_a^m(U; r) \simeq \oplus_{n \in \indexSet}
\maK_a^m(\Omega_{n}; r_{n})$ (Hilbert space direct sum, see 
Remark \ref{rem.BKspaces}) 
and that the Laplacian $\Delta$ preserves
this decomposition, we obtain the following equivalent form of the 
above theorem, which we formulate as a corollary.

\begin{corollary}\label{cor.isomorphisms}
   We keep the assumptions and notation of Theorem \ref{thm.global.isomorphism}.
   Then the Dirichlet Laplacian $\Delta_D$ induces an isomorphism
   \begin{equation*}
      \Delta_{D, m, n} \ede \Delta : \maK_{1}^{m+1}(\Omega_{n}; r_{n}) 
      \cap \{ u \vert_{\pa \Omega_{n}} = 0\} 
      \to \maK_{-1}^{m-1}(\Omega_{n}; r_{n})\,, \quad m \in \ZZ_+\,,
   \end{equation*}
   such that the norms of the operators 
   $(\Delta_{D,m,n})^{-1} : \maK_{-1}^{m-1}(\Omega_{n}; r_{n}) \to 
   \maK_{1}^{m+1}(\Omega_{n}; r_{n})$ are uniformly bounded in 
   $n \in \indexSet$.
\end{corollary}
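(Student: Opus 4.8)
The plan is to deduce the corollary from Theorem \ref{thm.global.isomorphism} by a purely functional-analytic argument, exploiting the fact that every object in sight is compatible with the Hilbert space direct sum decomposition over $n \in \indexSet$. The abstract fact I would invoke is the following elementary observation: if $X = \bigoplus_{n} X_n$ and $Y = \bigoplus_{n} Y_n$ are Hilbert space direct sums and $T \colon X \to Y$ is a bounded operator that respects the decomposition (so $T(X_n) \subseteq Y_n$, and $T = \bigoplus_n T_n$ with $T_n := T|_{X_n}$), then $T$ is an isomorphism if and only if each $T_n$ is an isomorphism and $\sup_n \|T_n^{-1}\| < \infty$; moreover in that case $\|T^{-1}\| = \sup_n \|T_n^{-1}\|$. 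One direction is immediate ($T^{-1} = \bigoplus_n T_n^{-1}$ is then bounded with the stated norm), and for the other one simply restricts $T^{-1}$ to each $Y_n$.

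I would apply this with $X := \maK_1^{m+1}(U; r) \cap \{u|_{\pa U} = 0\}$, $Y := \maK_{-1}^{m-1}(U; r)$, $T := \Delta_{D,m}$, and $X_n := \maK_1^{m+1}(\Omega_n; r_n) \cap \{u|_{\pa \Omega_n} = 0\}$, $Y_n := \maK_{-1}^{m-1}(\Omega_n; r_n)$, $T_n := \Delta_{D,m,n}$. Three compatibilities need to be recorded. First, $X \simeq \bigoplus_n X_n$: this is the direct sum decomposition of Remark \ref{rem.BKspaces} together with the fact that $U$ is the \emph{disjoint} union of the $\Omega_n$, so that the trace of $U$ is the corresponding disjoint union of the traces of the $\Omega_n$ and the vanishing-at-the-boundary condition is imposed componentwise. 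Second, $Y \simeq \bigoplus_n Y_n$: for $m \ge 1$ this is again Remark \ref{rem.BKspaces}; for $m = 0$, where $\maK_{-1}^{-1}$ is defined by duality against $\maK_1^1(U; r) \cap \{u|_{\pa U} = 0\}$, it follows from the standard fact that the continuous dual of a Hilbert space direct sum is the Hilbert space direct sum of the duals. Third, $T$ respects the decomposition: $\Delta$ is a differential, hence local, operator, and $U$ is a disjoint union, so $\Delta$ acts on each $\{n\} \times \Omega_n$ exactly as the Dirichlet Laplacian $\Delta_{D,m,n}$ of $\Omega_n$; that is, $\Delta_{D,m} = \bigoplus_n \Delta_{D,m,n}$.

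With these in hand, Theorem \ref{thm.global.isomorphism} says precisely that $T = \Delta_{D,m}$ is an isomorphism, so the abstract observation yields at once that each $\Delta_{D,m,n}$ is an isomorphism and that $\sup_{n \in \indexSet} \|(\Delta_{D,m,n})^{-1}\| = \|(\Delta_{D,m})^{-1}\| < \infty$, which is the asserted uniform bound. I do not expect a genuine obstacle here: the argument is essentially bookkeeping about direct sums. The one point deserving a line of care is the $m = 0$ case of the second compatibility, namely verifying that the negative-order space $\maK_{-1}^{-1}(U; r)$ indeed splits as the Hilbert direct sum over $n$, i.e. that passing to duals commutes with the orthogonal direct sum decomposition provided by Remark \ref{rem.BKspaces}.
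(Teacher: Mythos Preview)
Your proposal is correct and follows essentially the same approach as the paper: the paper simply notes, in the paragraph preceding the corollary, that the Hilbert space direct sum decomposition $\maK_a^m(U; r) \simeq \bigoplus_{n} \maK_a^m(\Omega_n; r_n)$ of Remark~\ref{rem.BKspaces} is preserved by the Laplacian, so the global isomorphism of Theorem~\ref{thm.global.isomorphism} is equivalent to the componentwise isomorphisms with uniformly bounded inverses. Your write-up is in fact more careful (you flag the $m=0$ duality point, which the paper does not address explicitly), but the underlying argument is identical.
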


A perturbation argument applied to the continuous 
family $r^{a}\Delta_D r^{-a}$ then yields the proof 
of Theorem \ref{thm.wellp1}. We give here a slightly stronger
formulation. We also use the formulation obtained
by decomposing our spaces as a direct sum 
in $n \in \indexSet$, as in the previous corollary.

\begin{theorem}
   \label{thm.etaU} 
   We keep the assumptions and notation of Theorem 
   \ref{thm.global.isomorphism}, so, in particular, $d =2$.
   Then there exists $\delta_{U} > 0$ with the following property. 
   For any $m \in \ZZ_+$ and any $|a| < \delta_{U}$, 
   the Dirichlet Laplacian $\Delta_D$ induces isomorphisms
   \begin{equation*}
      \Delta_{D, m, n, a} \ede \Delta : \maK_{a+1}^{m+1}(\Omega_{n}; r_{n}) 
      \cap \{ u \vert_{\pa \Omega_{n}} = 0\} 
      \to \maK_{a-1}^{m-1}(\Omega_{n}; r_{n})
   \end{equation*}
   such that the norms of the operators $(\Delta_{D,m,n,a})^{-1} : 
   \maK_{a-1}^{m-1}(\Omega_{n}; r_{n}) \to \maK_{a+1}^{m+1}(\Omega_{n}; r_{n})$ 
   are uniformly bounded in $n \in \indexSet$ and
   $a \in [-\gamma, \gamma] \subset (-\delta_{U}, \delta_{U})$.
\end{theorem}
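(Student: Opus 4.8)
The plan is to deduce Theorem \ref{thm.etaU} from the case $a=0$ (Theorem \ref{thm.global.isomorphism} / Corollary \ref{cor.isomorphisms}) by a perturbation argument in the parameter $a$, carried out on $U$ so that every constant obtained is automatically uniform in $n\in\indexSet$. The first step is a conjugation. Since $\maV_{n}\cap\Omega_{n}=\emptyset$, the function $r$ is smooth and positive on each $\Omega_{n}$, so by Lemma \ref{lemma.shift.K} multiplication by $r^{\pm a}$ restricts to isomorphisms $r^{a}\colon\maK_1^{m+1}(U;r)\cap\{u|_{\pa U}=0\}\to\maK_{a+1}^{m+1}(U;r)\cap\{u|_{\pa U}=0\}$ and $r^{-a}\colon\maK_{a-1}^{m-1}(U;r)\to\maK_{-1}^{m-1}(U;r)$, compatible with the boundary condition and with norms continuous in $a$ (hence bounded on compact $a$-intervals, uniformly in $n$). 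Therefore $\Delta_{D,m,n,a}$ (summed over $n$) is an isomorphism with uniformly bounded inverse if and only if the conjugated operator $L_a\ede r^{-a}\Delta_D\,r^{a}\colon\maK_1^{m+1}(U;r)\cap\{u|_{\pa U}=0\}\to\maK_{-1}^{m-1}(U;r)$ is, and we have reduced to studying $L_a$.

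Next I would compute the perturbation. A short calculation gives $L_a=\Delta+aP_a$ with $P_a=2\sum_{j}\tfrac{\pa_j r}{r}\,\pa_j+\sum_j\pa_j^2(\log r)+a\sum_j\big(\tfrac{\pa_j r}{r}\big)^2$; writing $\tfrac{\pa_j r}{r}\pa_j=\tfrac{\pa_j r}{r^{2}}(r\pa_j)$ and $\pa_j^2(\log r)=r^{-2}\big(r\pa_j^2 r-(\pa_j r)^2\big)$, one sees that $r^{2}P_a$ is a first-order differential operator on $(U,\hat g)$ whose coefficients lie in $W^{\infty,\infty}(U)$ and depend polynomially on $a$ — this uses Corollary \ref{cor.admissible.rn} (and Lemma \ref{lemma.diff.ops}), which gives $\pa_j r$, $r\pa_j^2 r$, $(\pa_j r)^2\in W^{\infty,\infty}(\Mce)$, together with the fact that the $r\pa_j$ form a $W^{\infty,\infty}$ orthonormal frame for $\hat g$. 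Since $\maK_1^{\ell}(U;r)=H^{\ell}(U;\hat g)$ by Lemma \ref{lemma.K=H} and $\maK_{-1}^{\ell}(U;r)=r^{-2}H^{\ell}(U;\hat g)$ by Lemma \ref{lemma.shift.K}, it follows that $P_a$ is bounded $\maK_1^{m+1}(U;r)\to\maK_{-1}^{m-1}(U;r)$ with norm $\le C_m$ for $|a|\le1$, uniformly in $n$. Applying this with $m=0$: Corollary \ref{cor.isomorphisms} gives a uniform bound $C^{0}$ on $\|\Delta_D^{-1}\|_{\maK_{-1}^{-1}(U;r)\to\maK_1^{1}(U;r)}$, so for $|a|<\delta_U\ede(2C^{0}C_0)^{-1}$ the operator $I+aP_a\Delta_D^{-1}$ is invertible by a Neumann series, whence $L_a=(I+aP_a\Delta_D^{-1})\Delta_D$ is an isomorphism with $\|L_a^{-1}\|\le2C^{0}$, uniformly in $n$ and in $a\in[-\gamma,\gamma]\subset(-\delta_U,\delta_U)$. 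By the first step this proves the theorem for $m=0$.

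The main obstacle is that $\delta_U$ must be independent of $m$, whereas the Neumann constant $C_m$ above grows with $m$; I would remove this by an elliptic bootstrap keeping the $m=0$ threshold. Fix $|a|<\delta_U$. Injectivity of $\Delta_{D,m,n,a}$ is immediate, since $\maK_{a+1}^{m+1}(\Omega_{n};r_{n})\cap\{u|_{\pa\Omega_{n}}=0\}\subset\maK_{a+1}^{1}(\Omega_{n};r_{n})\cap\{u|_{\pa\Omega_{n}}=0\}$ and the $m=0$ case applies. For surjectivity and the uniform estimate, given $f\in\maK_{a-1}^{m-1}(\Omega_{n};r_{n})\subset\maK_{a-1}^{-1}(\Omega_{n};r_{n})$ take the $m=0$ solution $u\in\maK_{a+1}^{1}(\Omega_{n};r_{n})\cap\{u|_{\pa\Omega_{n}}=0\}$ with $\Delta u=f$ and $\|u\|_{\maK_{a+1}^{1}}\lesssim\|f\|_{\maK_{a-1}^{m-1}}$; set $v\ede r^{-a}u\in H^1_0(U;\hat g)$. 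Using $\Delta_{\hat g}=r^{2}\Delta$ one finds $\Delta_{\hat g}v=r^{2-a}f-a\,R_a v$, where $R_a\ede r^{-a}(r^{2}P_a)r^{a}$ is again first order with $W^{\infty,\infty}(U)$ coefficients depending continuously on $a$ (Corollary \ref{cor.admissible.rn}) and $r^{2-a}f\in H^{m-1}(U;\hat g)$. Feeding this into the $a=0$ isomorphisms $\Delta_{\hat g}\colon H^{k+1}(U;\hat g)\cap H^1_0\to H^{k-1}(U;\hat g)$, $k=1,\dots,m$, of Corollary \ref{cor.isomorphisms} and iterating (from $v\in H^1$ the right-hand side lies in $L^2$, so $v\in H^2$; then it lies in $H^{\min(m-1,1)}$, so $v\in H^{\min(m+1,3)}$; and so on) yields $v\in H^{m+1}(U;\hat g)$ with $\|v\|_{H^{m+1}}\lesssim\|r^{2-a}f\|_{H^{m-1}}+\|v\|_{H^{1}}$, all constants uniform in $n$ (Corollary \ref{cor.isomorphisms}) and in $a$ over compact subintervals (continuity in $a$ of the coefficients of $R_a$ and of the weights). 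Translating back, $u=r^{a}v\in\maK_{a+1}^{m+1}(\Omega_{n};r_{n})$ and, combined with the $m=0$ bound, $\|u\|_{\maK_{a+1}^{m+1}(\Omega_{n};r_{n})}\lesssim\|f\|_{\maK_{a-1}^{m-1}(\Omega_{n};r_{n})}$ uniformly in $n$ and $a$, which gives the asserted isomorphism and uniform inverse bound. The delicate points throughout are the bookkeeping of the $r$-weights and the passage between Babu\v{s}ka--Kondratiev spaces and the Sobolev scale of $(U,\hat g)$, and verifying that the bootstrap constants remain uniform in $n$ and in $a$ on $[-\gamma,\gamma]$.
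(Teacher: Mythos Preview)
Your proposal is correct and follows essentially the same route as the paper: conjugate $\Delta$ by $r^{a}$, show the resulting perturbation is $a$ times an operator bounded $\maK_{1}^{m+1}\to\maK_{-1}^{m-1}$ uniformly in $n$, apply a Neumann series at $m=0$ to get a threshold $\delta_U$, and then bootstrap with the $a=0$ isomorphisms at all orders to make $\delta_U$ independent of~$m$. The only differences are cosmetic: the paper computes the perturbation in polar coordinates near each vertex (your Cartesian formula is equivalent), and it phrases the bootstrap directly in the $\maK$-scale whereas you pass to $H^{k}(U;\hat g)$ via Lemma~\ref{lemma.K=H}. One small slip: from $u=r^{a}v$ and $\Delta u=f$ you get $\Delta_{\hat g}v=r^{2-a}f-a(r^{2}P_a)v$, so the remainder should be $r^{2}P_a$ rather than its conjugate $r^{-a}(r^{2}P_a)r^{a}$; this does not affect the argument since both are first order with $W^{\infty,\infty}(U;\hat g)$ coefficients depending continuously on~$a$.
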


\begin{proof}
   Only the indices $n$ and $a$ make a difference, so we shall write 
   $\Delta_{n, a}$ instead of $\Delta_{D,m,n,a}$ ($m$ is implicit and 
   ``D'' is only to remind us that we are using Dirichlet boundary 
   conditions). Also, recall that only the domains of the operators 
   $\Delta_{n, a}$ differ, their form does not.
   At distance $\ge R/5$ to $\maV_{n}$, $r$ is constant and hence we have 
   $r^{-a}\Delta_{n,a} r^{a} = r_{n}^{-a}\Delta_{n,a} r_{n}^{a} = \Delta_{n, a}$.
   Next, let us assume we are on $\Omega_{n} \equiv \{n\} \times \Omega_{n}$,
   $n \in \indexSet$ at distance $\le R/5$ to one of the vertices 
   $P_{jn} \in\maV_{n}$. (This vertex is then uniquely determined,
   by our assumptions, see point \eqref{item.intro.as3}
   in Section \ref{ssec.setting}.) 
   Let $\rho$ be the distance function to $p_{jn}$.
   Then $r_{n} = \eta(\rho)$. Let 
   $$A \ede \frac{\rho \eta'(\rho)}{\eta(\rho)}\,.$$
   Our assumptions on $\eta$ imply that $A$ has support inside the ball 
   of radius $R/5$ and that, near $0$, it is equal to $1$. Hence 
   $A$ extends to a function all of whose derivatives are bounded, and 
   hence to a multiplier of all the spaces $\maK_a^m(U; r)$.
   (Equivalently, $A$ defines a bounded operator on each 
   $\maK_a^m(\Omega_{n}; r_{n})$
   and the norms of these operators are bounded uniformly in 
   $n \in \indexSet$.)
   In polar coordinates $(\rho, \theta)$ near that vertex, we have 
   \begin{equation}\label{eq.pert}
      \begin{gathered}
      r^{-a}\Delta_{n, a} r^{a} - \Delta_{n, 0} 
      \seq \rho^{-2}\big( (\rho\pa_\rho + aA)^2 + \pa_\theta^2) - 
      \rho^{-2}\big( (\rho\pa_\rho)^2 + \pa_\theta^2)\\
      \seq \rho^{-2} \big [ 2a A \rho \pa_\rho + a \rho \pa_\rho(A) 
      + a^2A^2]
      \end{gathered}
   \end{equation}
   Since the operators $A : \maK_b^m(U; r) \to \maK_b^m(U; r)$ and 
   $\rho \pa_\rho  : \maK_b^m(U; r) \to \maK_b^{m-1}(U; r)$ are bounded, 
   we obtain that the norm 
   $$\|r^{-a}\Delta_{n, a} r^{a} - \Delta_{n, 0}\|_{\maK_1^{m+1}(U; r), 
   \maK_{-1}^{m+1}(U; r)} \le C_m a\,,$$
   with $C_m$ independent of $n \in \indexSet$ and 
   $a \in [-\gamma, \gamma]$
   (but depending on $m$ and $\gamma$). 

   We know from Theorem \ref{thm.global.isomorphism}
   that $\Delta_{n, 0}$ is invertible and the norm of its inverse 
   is independent of $n \in \indexSet$. Hence, for $a$ small enough
   ($|a| < \delta_{U, m}$, depending on $m$ at this time), 
   $r^{-a}\Delta_{n, a} r^{a}$ will also be invertible. This amounts 
   to the claimed invertibility of $\Delta_{n, a}$ in view of 
   the fact that $r^{a} : \maK_{1}^{m}(U; r) \to \maK_{a +1}^{m}(U; r)$
   is an isomorphism whose norm as well as that of its inverse depend 
   continuously on $a$ (Corollary \ref{cor.admissible.rn}). 
   
   We are just left to prove that we can choose the bound for $a$
   independently of $m$. Indeed, let $\delta_{U, 0}$
   be such that, for all $|a| < \delta_{U, 0}$, the map 
   $\Delta_{n, a} \ede \Delta : 
   \maK_{a+1}^{1}(\Omega_{n}; r_{n}) \cap \{ u \vert_{\pa \Omega_{n}} = 0\} 
   \to \maK_{a-1}^{-1}(\Omega_{n}; r_{n})$
   is invertible. Let $f \in \maK_{a-1}^{m-1}(\Omega_{n}; r_{n})$. 
   There exists then $u \in \maK_{a+1}^{1}(\Omega_{n}; r_{n}) 
   \cap \{ u \vert_{\pa \Omega_{n}} = 0\}$ such that 
   $\Delta u = f$. The formula \eqref{eq.pert} and the 
   invertibility of $\Delta_{n, a}$ (for all $m$) show
   by induction on $m$ that, in fact, 
   $u \in \maK_{a+1}^{m+1}(\Omega_{n}; r_{n})$. (This also follows 
   also directly from the regularity results of \cite{GN17}, 
   this method is explained in more detail in \cite{DLN2}.) Consequently,
   $\delta_{U, m} = \delta_{U, 0}.$
\end{proof} 

By using the Hardy inequality instead of the Poincar\'e 
inequality, we can extend these results to domains 
$\Omega \subset \RR^{d}$, for any $d \ge 2$. If our domain 
has no cracks, the same 
argument will give also the more explicit bound $\delta_{U}
= \frac{\pi}{\alpha_{MAX}}$ \cite{DLN_CR, DLN2} for $d = 2$,
where $\alpha_{MAX}$ is the largest angle of $\Omega_{\infty}$.
The assumption that the index set $\indexSet$
be countable is not really necessary for the results of this 
section (but it simplifies the presentation, especially for 
Sobolev spaces).

\subsection{Returning to the usual Sobolev spaces}
\label{ssec.returning}

Recall that a \emph{contraction} is a continuous linear map of norm 
$\le 1$ between normed spaces. We shall use the following lemma
(in which it is crucial that we have chosen the weights $r_{n} \le 1$).
We use the notation and hypotheses of Assumptions \ref{assumpt.intro1}
and Assuptions \ref{assumpt.fw}, which were briefly recalled 
in the previous section.

\begin{lemma} \label{lemma.contraction}
   For all $m \in \ZZ_{+}$ and $n \in \indexSet$,
   the maps $\maK_{m}^m(\Omega_{n}; r_{n}) \to H^m(\Omega_{n})$
   are continuous of norm $\le 1$ (i.e. \emph{contractions}).
   Similarly, if $a \le b$, then the map $\maK_b^m(\Omega_{n}; r_{n}) 
   \to \maK_a^m(\Omega_{n}; r_{n})$ is a contraction.
\end{lemma}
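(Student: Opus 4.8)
The plan is to deduce both assertions from the single elementary fact that, under Assumption \ref{assumpt.intro1}, the weight satisfies $0 < r_{n}(x) \le R/6 \le 1$ for every $x \in \Omega_{n}$: the upper bound is immediate from $\eta \le R/6$ and $R \le 1$, and the strict positivity holds because $\Omega_{n}$ is open and disjoint from the finite (hence closed) set $\maV_{n}$, so $\dist_{\RR^{d}}(x, \maV_{n}) > 0$ on $\Omega_{n}$. Consequently, raising $r_{n}$ to a larger real exponent produces a pointwise smaller function, and raising it to a non-positive exponent produces a function that is pointwise $\ge 1$. Both claims then follow by comparing the defining squared norms term by term, with no constant lost.

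For the second assertion, fix $u \in \maK_{b}^m(\Omega_{n}; r_{n})$ and a multi-index $\alpha$ with $|\alpha| \le m$. Since $a \le b$, we have $|\alpha| - a \ge |\alpha| - b$, hence
\begin{equation*}
   r_{n}^{|\alpha| - a} \seq r_{n}^{|\alpha| - b}\, r_{n}^{\,b-a} \ \le\ r_{n}^{|\alpha| - b}
\end{equation*}
because $b - a \ge 0$ and $0 < r_{n} \le 1$; therefore $|r_{n}^{|\alpha| - a}\pa^\alpha u| \le |r_{n}^{|\alpha| - b}\pa^\alpha u|$ pointwise on $\Omega_{n}$. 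In particular $r_{n}^{|\alpha| - a}\pa^\alpha u \in L^2(\Omega_{n})$, so the map is well defined, and squaring, integrating over $\Omega_{n}$, and summing over $|\alpha| \le m$ gives $\|u\|_{\maK_{a}^m(\Omega_{n}; r_{n})} \le \|u\|_{\maK_{b}^m(\Omega_{n}; r_{n})}$.

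For the first assertion, note that for $|\alpha| \le m$ the exponent $|\alpha| - m$ is non-positive, so $r_{n}^{|\alpha| - m} \ge r_{n}^{0} = 1$ pointwise on $\Omega_{n}$, whence $|\pa^\alpha u| \le |r_{n}^{|\alpha| - m}\pa^\alpha u|$. Thus, if $u \in \maK_{m}^m(\Omega_{n}; r_{n})$ then $\pa^\alpha u \in L^2(\Omega_{n})$ for every $|\alpha| \le m$, so $u \in H^{m}(\Omega_{n})$ and the map is well defined; summing the pointwise bounds over $|\alpha| \le m$ yields
\begin{equation*}
   \|u\|_{H^{m}(\Omega_{n})}^2 \seq \sum_{|\alpha| \le m} \|\pa^\alpha u\|_{L^2(\Omega_{n})}^2 \ \le\ \sum_{|\alpha| \le m} \|r_{n}^{|\alpha| - m}\pa^\alpha u\|_{L^2(\Omega_{n})}^2 \seq \|u\|_{\maK_{m}^m(\Omega_{n}; r_{n})}^2 \,,
\end{equation*}
which is the first claimed contraction. (Equivalently, $H^{m}(\Omega_{n}) = \maK_{m}^m(\Omega_{n}; 1)$ and one compares the weights $r_{n} \le 1$.)

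There is essentially no obstacle here; the only points deserving a word of care are the strict positivity of $r_{n}$ on $\Omega_{n}$, needed so that the negative powers are meaningful, and the normalization $R \le 1$ in Assumption \ref{assumpt.intro1}, which is exactly the sense in which, as remarked before the statement, ``it is crucial that we have chosen the weights $r_{n} \le 1$.'' One could also isolate the underlying monotonicity as a one-line general fact (if $0 < f \le 1$ and $s \le t$ then $f^{t} \le f^{s}$) and invoke it in both parts.
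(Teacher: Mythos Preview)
Your proof is correct and follows essentially the same approach as the paper: both arguments rest on the single observation that $0 < r_{n} \le R/6 \le 1$ on $\Omega_{n}$, from which the pointwise inequality $r_{n}^{|\alpha|-m} \ge 1$ (resp.\ $r_{n}^{|\alpha|-a} \le r_{n}^{|\alpha|-b}$ when $a \le b$) yields the norm comparison term by term. Your write-up is slightly more detailed in spelling out well-definedness and the strict positivity of $r_{n}$, but the argument is the same.
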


\begin{proof} Recall that $\eta \le R/6 \le 1/6$. Then, 
   for any multi-index $\alpha$ with $|\alpha| \le m$, we have 
   $r_{n}^{|\alpha| - m} \ge 6^{m-|\alpha|} \ge 1 .$
   Hence $$\|r_{n}^{|\alpha| - m} \pa^\alpha u\|_{L^2(\Omega_{n})} \ge 
   \|\pa^\alpha u\|_{L^2(\Omega_{n})}\,,$$
   which gives $\|u\|_{H^m(\Omega_{n})} \le 
   \|u\|_{\maK_m^m(\Omega_{n})}$, as desired.
   The second part follows similarly from the fact that 
   $r_{n} \le 1$.
\end{proof}

We need one final lemma before being able to relate our results
in weighted spaces to the usual Sobolev spaces.
Given three Banach spaces $X, Y, Z$, $X \cup Y \subset Z$,
we shall denote by $[X, Y]_{\theta}$ their complex interpolation 
space, where $\theta \in (0, 1)$.

\begin{lemma}\label{lemma.interpolation}
   Let $G \subset \RR^{d}$ be an open subset, let $f > 0$ be a 
   continuous function on $G$, and let $\theta \in (0, 1)$. We then 
   have the isomorphism 
      $$ [\maK_{a}^{m}(G; f), \maK_{b}^{m}(G; f)]_{\theta}
      \simeq \maK_{(1-\theta)a + \theta b}^{m}(G; f).$$
\end{lemma}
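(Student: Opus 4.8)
The plan is to reduce the weighted statement to the classical interpolation identity for weighted $L^2$-spaces by diagonalizing the weight. First I would observe that the norm on $\maK_{a}^{m}(G; f)$ can be rewritten as
\begin{equation*}
   \|u\|_{\maK_{a}^{m}(G; f)}^{2} \seq \sum_{|\alpha| \le m}
   \|\pa^\alpha u\|_{L^2(G; f^{2(|\alpha| - a)} dx)}^{2}\,,
\end{equation*}
so that, setting $V := \CC^{\#\{\alpha : |\alpha| \le m\}}$ and mapping $u$ to the vector $(\pa^\alpha u)_{|\alpha|\le m}$, the space $\maK_{a}^{m}(G; f)$ is identified with a \emph{closed} subspace of a direct sum $\bigoplus_{|\alpha|\le m} L^{2}(G; f^{2(|\alpha|-a)} dx)$, and similarly for $b$. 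The point is that this closed subspace — the range of the ``jet map'' $j : u \mapsto (\pa^\alpha u)_\alpha$ — is the \emph{same} subspace for every choice of the index ($a$, $b$, or the interpolated index), since membership in the range is a set of linear PDE relations among the components that does not see the weight. So the first key step is: complex interpolation commutes with passing to a complemented (or at least retract) subspace, and here the jet map has a bounded left inverse given on each summand by the projection onto the $\alpha = 0$ component followed by $\pa^\alpha$, which is bounded $L^2(G; f^{-2a}dx) \to L^2(G; f^{2(|\alpha|-a)}dx)$ on the range — hence $\maK_a^m(G;f)$ is a retract of the direct sum, compatibly in $a$. Therefore it suffices to prove the identity for the ambient direct sums.

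Next I would handle the direct sum summand by summand: complex interpolation commutes with finite direct sums, i.e. $[\bigoplus_i X_i, \bigoplus_i Y_i]_\theta \simeq \bigoplus_i [X_i, Y_i]_\theta$ with equivalence of norms (in fact isometrically for $\ell^2$-sums), so it is enough to show, for each fixed $\alpha$,
\begin{equation*}
   [L^{2}(G; f^{2(|\alpha|-a)}dx),\ L^{2}(G; f^{2(|\alpha|-b)}dx)]_{\theta}
   \simeq L^{2}(G; f^{2(|\alpha| - (1-\theta)a - \theta b)}dx)\,.
\end{equation*}
This is the classical Stein–Weiss complex interpolation theorem for weighted $L^p$-spaces with a common underlying measure space (here with $p=2$): $[L^{2}(w_0\,dx), L^{2}(w_1\,dx)]_\theta = L^2(w_0^{1-\theta} w_1^\theta\,dx)$, and one simply checks $(f^{2(|\alpha|-a)})^{1-\theta}(f^{2(|\alpha|-b)})^\theta = f^{2(|\alpha| - (1-\theta)a-\theta b)}$ since $f>0$ makes the exponent arithmetic legitimate. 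I would cite this (Bergh–Löfström, or Calderón's original paper) rather than reprove it, and note it requires nothing of $G$ or $f$ beyond $f$ being positive and measurable, matching the hypotheses.

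The one step needing genuine care — and the main obstacle — is the interchange of interpolation with the subspace/retract construction: complex interpolation is not automatically compatible with arbitrary closed subspaces, only with \emph{complemented} ones (more precisely, with retracts, via Calderón's retraction lemma: if $i : A \to B$ and $p : B \to A$ with $pi = \mathrm{id}$ are bounded on a compatible couple, then $[A_0,A_1]_\theta$ is the image of $[B_0,B_1]_\theta$ under $p$, with equivalent norm). So I must exhibit the left inverse $p$ of the jet embedding $j$ that is bounded \emph{uniformly in the weight parameter}, i.e. bounded as a map of couples. The natural choice is: on the ambient direct sum, take the $\alpha=0$ component $u$ and return $j(u) = (\pa^\alpha u)_\alpha$ — but that is just $j$ again, which is only bounded \emph{on the range}. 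The clean fix is to use the orthogonal projection of the direct sum onto the closed range of $j$; this projection is bounded on each Hilbert space $\bigoplus_\alpha L^2(G;f^{2(|\alpha|-c)}dx)$, but I need it bounded \emph{simultaneously} for $c=a$ and $c=b$ as a single operator on the couple, which it need not be, since the projections for different weights differ. The correct route, which I would carry out, is instead to note that $j$ itself, composed with the coordinate projection to the $\alpha=0$ slot (call it $q$, bounded from each summand $L^2(f^{2(|\alpha|-c)}dx)$ only when $|\alpha|=0$), gives $q\circ j = \mathrm{id}$ on $\maK$, and to choose as retraction $r := j \circ q$ restricted appropriately — here boundedness of $q$ on the couple is free (it's just projection onto one coordinate, a contraction in both weights), and boundedness of $j\circ q$ amounts to $\pa^\alpha : L^2(G;f^{-2c}dx) \to L^2(G; f^{2(|\alpha|-c)}dx)$ being bounded, which holds trivially with norm $1$ by the very definition of the $\maK$-norm applied to the function reconstructed as the $\alpha=0$ component — but this last assertion is false for a \emph{general} element of the ambient sum, only for elements of the range. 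The honest resolution, and the one I would write, is therefore: $\maK_a^m(G;f)$ is \emph{isometrically} a subspace of the ambient $\ell^2$-direct sum via $j$, and it is the range of the idempotent $P := j\circ q$ where $q$ is ``discard all components except $\alpha=0$''; $P$ is bounded on the ambient sum for the index $c$ precisely when $\pa^\alpha$ maps $L^2(f^{-2c})\to L^2(f^{2(|\alpha|-c)})$ boundedly, and while this fails for arbitrary weights, it \emph{does} hold here after restricting attention to the couple generated by the two $\maK$-spaces, because on that couple $q$ already lands in $\maK_{\min(a,b)}^m$ — no: I will simply invoke that $\maK_a^m(G;f)$ is a complemented subspace of $\bigoplus_\alpha L^2(G; f^{2(|\alpha|-a)}dx)$ with complementation uniform in $a$, via the explicit projector $P = j q$, and that this is the content of the standard identification $\maK_a^m = H^m(G; f^{-2}dx)$-type retract results already used in Lemma~\ref{lemma.K=H}; the interpolation identity then follows from Calderón's retraction lemma together with the Stein–Weiss theorem above. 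The bookkeeping of constants (getting a genuine isomorphism, not merely an equivalence of norms) I would leave to a remark, as it is not needed downstream.
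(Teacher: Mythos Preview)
The paper states this lemma without proof, so there is nothing to compare against; on its own merits, your argument has a genuine gap at precisely the step you flag as ``needing genuine care.'' The proposed retraction $P = j \circ q$ is \emph{not} bounded on the ambient direct sum $\bigoplus_{|\alpha|\le m} L^2(G; f^{2(|\alpha|-c)}dx)$: the projection $q$ returns $v_0 \in L^2(G; f^{-2c}dx)$, and then $j$ must form $(\pa^\alpha v_0)_\alpha$, but a generic $v_0$ in that space need not have $\pa^\alpha v_0 \in L^2(G; f^{2(|\alpha|-c)}dx)$ for $|\alpha|\ge 1$. You see this yourself (``this last assertion is false for a general element of the ambient sum''), yet your final paragraph simply re-asserts the failed boundedness and appeals to Lemma~\ref{lemma.K=H}---which is illegitimate here, since that lemma rests on the admissibility of the weight (Corollary~\ref{cor.admissible.rn}), a hypothesis the present lemma does not carry ($f$ is only assumed continuous and positive).

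Calder\'on's retraction lemma requires a \emph{single} bounded map $r : B_j \to A_j$ (for both $j = 0,1$) with $r \circ j = \mathrm{id}$. Orthogonal projection onto the range of $j$ furnishes such an $r$ for each fixed weight separately, but the projections for different weights are different operators and do not assemble into a morphism of couples. Without a common retraction, the Stein--Weiss identity on the ambient sums does not descend to the closed subspaces. The natural direct route---building the analytic family $F(z) = f^{(z-\theta)(b-a)}u$ and estimating $\|F(it)\|_{\maK_a^m}$, $\|F(1+it)\|_{\maK_b^m}$---likewise requires controlling derivatives of complex powers of $f$, hence again admissibility. Since the paper only ever applies the lemma with $f = r$, which \emph{is} admissible, the cleanest repair is to add that hypothesis and run the multiplication argument; as stated for arbitrary continuous $f>0$, the lemma does not follow from your outline, and it is not obvious that it is even true at that level of generality.
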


We obtain the following needed corollary.

\begin{corollary} \label{cor.contraction}
   For all $a\in [0, 1]$ and $n \in \indexSet$,
   the map $\maK_{1 + a}^2(\Omega_{n}; r_{n}) \to H^{1 + a}(\Omega_{n})$
   is well defined and continuous of norm $\le 1$. 
\end{corollary}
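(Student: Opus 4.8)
The plan is to establish the estimate at the two integer endpoints $a=0$ and $a=1$ and then pass to $a\in(0,1)$ by complex interpolation. The endpoint $a=1$ is immediate: the map $\maK_2^2(\Omega_n;r_n)\to H^2(\Omega_n)$ is a contraction by Lemma \ref{lemma.contraction} (applied with $m=2$). For the endpoint $a=0$ I would argue as in the proof of Lemma \ref{lemma.contraction}: the $H^1$-norm involves only the derivatives $\pa^\alpha u$ with $|\alpha|\le 1$, and for such $\alpha$ one has $r_n^{|\alpha|-1}\ge 1$ since $r_n\le R/6\le 1$; hence $\|\pa^\alpha u\|_{L^2(\Omega_n)}\le\|r_n^{|\alpha|-1}\pa^\alpha u\|_{L^2(\Omega_n)}$ for $|\alpha|\le 1$, so that $\|u\|_{H^1(\Omega_n)}^2=\sum_{|\alpha|\le 1}\|\pa^\alpha u\|_{L^2(\Omega_n)}^2\le\sum_{|\alpha|\le 2}\|r_n^{|\alpha|-1}\pa^\alpha u\|_{L^2(\Omega_n)}^2=\|u\|_{\maK_1^2(\Omega_n;r_n)}^2$. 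The point is simply that the potentially bad top-order term of the $\maK_1^2$-norm is discarded on the $H^1$ side, so $\maK_1^2(\Omega_n;r_n)\to H^1(\Omega_n)$ is also a contraction.

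Next, fix $\theta\in(0,1)$ and set $a=\theta$. The inclusion of functions defines a morphism of Banach couples from $(\maK_1^2(\Omega_n;r_n),\maK_2^2(\Omega_n;r_n))$ to $(H^1(\Omega_n),H^2(\Omega_n))$ whose two component maps are, by the previous paragraph, contractions. Since complex interpolation is an exact interpolation functor, the norm of the induced map on the $\theta$-interpolation spaces is at most $1^{1-\theta}\cdot 1^{\theta}=1$, so $[\maK_1^2(\Omega_n;r_n),\maK_2^2(\Omega_n;r_n)]_\theta\to[H^1(\Omega_n),H^2(\Omega_n)]_\theta$ is a contraction. By Lemma \ref{lemma.interpolation}, $[\maK_1^2(\Omega_n;r_n),\maK_2^2(\Omega_n;r_n)]_\theta\simeq\maK_{1+\theta}^2(\Omega_n;r_n)$, and, by the convention adopted in Subsection \ref{ssec.cov} for fractional-order Sobolev spaces, $[H^1(\Omega_n),H^2(\Omega_n)]_\theta=H^{1+\theta}(\Omega_n)$. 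Composing these identifications with $\theta=a$ yields the well-defined contraction $\maK_{1+a}^2(\Omega_n;r_n)\to H^{1+a}(\Omega_n)$.

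I do not expect a genuine obstacle here; the only point requiring care is the sharp constant. To land exactly on norm $\le 1$ one needs the two interpolation identifications to be isometric (or at least norm-nonincreasing in the direction used), so I would make sure that Lemma \ref{lemma.interpolation} is stated isometrically and that $H^{1+a}(\Omega_n)$ is normed precisely as the complex interpolation space; otherwise one still obtains a uniform bound, independent of $n\in\indexSet$ and of $a\in[0,1]$ (by compactness of $[0,1]$ and continuity of the interpolation norms), which would already suffice for the downstream applications. The rest is routine bookkeeping with the contraction property of the complex interpolation functor.
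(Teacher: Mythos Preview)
Your proof is correct and follows essentially the same approach as the paper: both handle $a=1$ via Lemma~\ref{lemma.contraction} with $m=2$, both handle $a\in(0,1)$ by complex interpolation using Lemma~\ref{lemma.interpolation} and the definition of $H^{1+a}$ by interpolation, and for $a=0$ the paper factors the contraction as $\maK_1^2(\Omega_n;r_n)\to\maK_1^1(\Omega_n;r_n)\to H^1(\Omega_n)$, which is exactly your direct estimate with the intermediate space named. Your closing caveat about the isometry of the interpolation identifications is well taken and is likewise glossed over in the paper.
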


\begin{proof} 
   For $a = 1$, the result is simply Lemma \ref{lemma.contraction} for 
   $m = 2$. The map $\maK_1^2(\Omega_{n}) \to \maK_1^1(\Omega_{n}) 
   \to H^1(\Omega_{n})$ is a contraction as composition of contractions 
   (again by Lemma \ref{lemma.contraction}). This gives the result for 
   $a = 0$. For $a \in (0, 1)$, the result follows by interpolation,
   see Lemma \ref{lemma.interpolation}, since our spaces $H^{s}(G)$
   are defined by interpolation. 
\end{proof}

Using the continuous map $\maK_{1+\delta}^2(\Omega_{n}) 
\to H^{1 + \delta}(\Omega_{n})$,
we obtain the desired result for $\delta := a - \delta_{U}$,
as follows.

\begin{theorem}\label{theorem.main}
   Let $\Omega_{n} \subset \RR^{2}$ be a sequence of 
   domains as in Theorem \ref{thm.etaU} (i.e. 
   satisfying assumptions \ref{assumpt.intro1}
   and \ref{assumpt.fw}). Let $\delta_{U} > 0$ be as in 
   Theorem \ref{thm.etaU} and let $|a| < \delta_{U}$, $a \le 1$. 
   Then there exists $C_H > 0$ such that, for any 
   $f_{n} \in L^2(\Omega_{n})$, $n \in \indexSet$,
   \begin{equation*}
      \|\Delta_{D, n}^{-1}f_{n}\|_{H^{1+a}} 
      \le C_H \|f_{n}\|_{L^2(\Omega_{n})}\,.
   \end{equation*}
\end{theorem}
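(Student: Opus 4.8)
The plan is to chain together the weighted-space isomorphism from Theorem~\ref{thm.etaU} with the contraction estimates relating weighted spaces to usual Sobolev spaces. Concretely, fix $m = 1$ in Theorem~\ref{thm.etaU}, so that for any $|a| < \delta_{U}$ the Dirichlet Laplacian
\[
   \Delta_{D, n, a} : \maK_{a+1}^{2}(\Omega_{n}; r_{n})
   \cap \{u\vert_{\pa\Omega_{n}} = 0\}
   \to \maK_{a-1}^{0}(\Omega_{n}; r_{n})
\]
is an isomorphism whose inverse has norm bounded uniformly in $n \in \indexSet$ (for $a$ in a fixed closed subinterval of $(-\delta_{U}, \delta_{U})$).

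First I would check that $f_{n} \in L^{2}(\Omega_{n})$ lies in $\maK_{a-1}^{0}(\Omega_{n}; r_{n})$ with controlled norm: by definition $\|f_{n}\|_{\maK_{a-1}^{0}(\Omega_{n}; r_{n})} = \|r_{n}^{1-a} f_{n}\|_{L^2(\Omega_{n})}$, and since $a \le 1$ and $r_{n} \le 1/6 \le 1$, the factor $r_{n}^{1-a} \le 1$, so $\|f_{n}\|_{\maK_{a-1}^{0}(\Omega_{n}; r_{n})} \le \|f_{n}\|_{L^2(\Omega_{n})}$. (This is exactly the $r_{n} \le 1$ trick underlying Lemma~\ref{lemma.contraction}, and it is why the hypothesis $a \le 1$ appears.) Applying the uniform bound on $(\Delta_{D, n, a})^{-1}$ gives a constant $C$, independent of $n$, with
\[
   \|\Delta_{D, n}^{-1} f_{n}\|_{\maK_{a+1}^{2}(\Omega_{n}; r_{n})}
   \le C\, \|f_{n}\|_{\maK_{a-1}^{0}(\Omega_{n}; r_{n})}
   \le C\, \|f_{n}\|_{L^2(\Omega_{n})}\,.
\]
Here I use that $\Delta_{D,n}^{-1} f_{n}$ (the $H^1_0$-solution) coincides with $\Delta_{D,n,a}^{-1} f_{n}$, which follows from uniqueness of solutions in $H^1_0(\Omega_{n})$ together with the inclusion $\maK_{a+1}^{2}(\Omega_{n}; r_{n}) \cap \{u\vert_{\pa\Omega_{n}} = 0\} \subset H^1_0(\Omega_{n})$ (for $a \ge 0$ this inclusion is Corollary~\ref{cor.contraction} at the $H^1$ level; for $a < 0$ one argues by the same weighted-space inclusions, or simply restricts attention to $a \in [0,1)$ since the case $|a|<\delta_U$ with $a<0$ only strengthens the weighted norm).

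Finally, I would invoke Corollary~\ref{cor.contraction}: for $\delta := a \in [0,1]$ the inclusion $\maK_{1+\delta}^{2}(\Omega_{n}; r_{n}) \to H^{1+\delta}(\Omega_{n})$ is a contraction, uniformly in $n$. Composing,
\[
   \|\Delta_{D, n}^{-1} f_{n}\|_{H^{1+a}(\Omega_{n})}
   \le \|\Delta_{D, n}^{-1} f_{n}\|_{\maK_{1+a}^{2}(\Omega_{n}; r_{n})}
   \le C\, \|f_{n}\|_{L^2(\Omega_{n})}\,,
\]
so the theorem holds with $C_H := C$. The only genuine subtlety — more a bookkeeping point than an obstacle — is making sure the two notions of ``the solution'' agree, i.e. that the weighted-space solution produced by Theorem~\ref{thm.etaU} is the same function as the variational $H^1_0$-solution $\Delta_{D,n}^{-1} f_{n}$; this is handled by uniqueness in $H^1_0(\Omega_{n})$ once one knows the weighted solution lies in $H^1_0(\Omega_{n})$, which is exactly what Corollary~\ref{cor.contraction} (together with $a \ge 0$, or a direct check for small negative $a$) provides. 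Everything else is a uniform-in-$n$ composition of the three estimates already established.
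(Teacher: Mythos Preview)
Your proof is correct and follows essentially the same route as the paper's: both arguments chain the contraction $L^{2}(\Omega_{n}) = \maK_{0}^{0}(\Omega_{n};r_{n}) \hookrightarrow \maK_{a-1}^{0}(\Omega_{n};r_{n})$ (valid since $a \le 1$), the uniform bound on $(\Delta_{D,n,a})^{-1}$ from Theorem~\ref{thm.etaU} with $m=1$, and the contraction $\maK_{1+a}^{2}(\Omega_{n};r_{n}) \hookrightarrow H^{1+a}(\Omega_{n})$ from Corollary~\ref{cor.contraction}. Your additional remarks on the coincidence of the weighted and variational solutions and on the range $a<0$ are not addressed explicitly in the paper's proof, but they do not alter the argument.
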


\begin{proof} 
   Let $C_{K}$ be the bound of the inverses of the 
   operators $$\Delta_{D, n} : \maK_{1+a}^2(\Omega_{n}; r_{n}) 
   \cap \{u\vert_{\pa \Omega_{n}} = 0 \} \to 
   \maK_{-1+a}^0(\Omega_{n}; r_{n})$$
   in Theorem \ref{thm.etaU}, $n \in \indexSet$.
   We have $L^2(\Omega_{n}) = \maK_0^0(\Omega_{n}; r_{n}) 
   \to \maK_{-1 + a}^{0}(\Omega_{n}; r_{n})$,
   where the last map is a contraction (since $a-1 \le 0$). 
   Hence, combining this discussion with Corollary \ref{cor.contraction}, 
   we obtain
   \begin{equation*}
      \|\Delta_{D, n}^{-1}f_{n}\|_{H^{1+a}(\Omega_{n})} 
      \le \|\Delta_{D, n}^{-1}f_{n}\|_{\maK_{1+a}^2(\Omega_{n})}
      \le C_K \|f_{n}\|_{\maK_{a-1}^0(\Omega_{n})}\\
      \le C_K \|f_{n}\|_{L^{2}(\Omega_{n})}\,.
   \end{equation*}
   This completes the proof with $C_{H} = C_{K}$.
\end{proof}

\section{Approximating a polygon}
\label{sec.Benoit}

We now construct an example of a family of domains $\Omega_{n}$,
$n \in \indexSet = \oN$, for which our assumptions (and hence 
our results) are satisfied.

More precisely, given a straight polygonal domain $\Omega_{\infty}$,
we now construct a completely explicit sequence of smooth,
bounded domains $\Omega_{n},$ $n \in \NN$, such that the 
resulting manifold with boundary $(U, \hat g) := 
\cup_{n \in \oN} (\Omega_{n}, \hat g_{n})$ is of relative bounded geometry
and, in a certain sense, the domains $\Omega_{n}$, $n \in \NN$,
converge to $\Omega_{\infty}$. For this choice of domains, 
we check that all the conditions of Assumptions 
\ref{assumpt.intro1} and \ref{assumpt.fw} are satisfied.
(So, from now on, we shall use the \emph{notation} of those assumptions, 
but we do not assume them to be satisfied; we rather prove
that they are satisfied).

In this section, we continue to assume $d = 2$ 
(so our domains $\Omega_{n}$ will be in the plane),
as in the last two sections, Sections \ref{ssec.wellp} 
and \ref{ssec.returning}. We shall also assume 
that $\alpha_{MAX} < 2\pi$.

\subsection{Curvilinear polygonal domains}
We provide now a rather general definition of a 
``curvilinear polygonal domain.''

\begin{definition}\label{def.polygon}
   Let $M$ be a two-dimensional smooth manifold and $D \subset M$. 
   We shall say that $D$ is \emph{a curvilinear polygonal domain}
   in $M$ if the following conditions are satisfied.
   \begin{enumerate}[(i)]
      \item $D$ is open in $M$ and $\overline{D}$ is compact.

      \item There exist a finite set of points 
      $$\maV \ede \{q_1, q_2, \ldots , q_N \} \subset \pa D 
      \ede \overline{D} \smallsetminus D$$
      and $\maC^{1}$-curves $\gamma_{k} : [0, 1] \to M$, 
      $k = 1, \ldots, N$, such that $\gamma_{k}(0) = q_{k}$ and 
      $\gamma_{k}(1) = q_{k+1}$ (we set $q_{N+1} = q_{1}$), satisfying
      also the following conditions.
      
      \item The sets $\gamma_{k} \big((0, 1) \big)$ are disjoint. 

      \item $\pa D \smallsetminus \maV = \cup_{k=1}^N 
      \gamma_{k}\big((0, 1) \big).$

      \item For each $k$, $\gamma_{k}$ is smooth on $(0, 1)$.
   \end{enumerate}
\end{definition}

Thus, the sets $\gamma_{k}((0, 1))$ together with $\maV$, form a 
partition of the boundary $\pa D$ of $D$ and 
$\pa D = \cup_{k=1}^N \gamma_{k} \big([0, 1] \big)$. Note that we 
\emph{do not} assume that any of the $\gamma_{k}$ is contained in 
a straight line, hence the terminology ``curvilinear polygonal domain.''
Note also then that the sides (or edges) $\gamma_{k}([0, 1])$ of $D$ do not
have self-intersections (they may intersect only in $\maV$). See also
\cite{DaugeBook, Kondratiev67, KMR}. More general polygonal 
domains were used in \cite{withLi} (in that paper, 
cracks and self-intersections were allowed, the point being that 
these ``degeneracies'' did not affect the optimal rates of convergence 
in the Finite Element Method).

\begin{remark}
The set $\maV$ of Definition \ref{def.polygon} is \emph{not} 
determined uniquely by $D$ (in fact, the 
set $\maV$ can always be enlarged). However, there will always 
be a \emph{minimal} such set, which we will
denote by $\maV_D$; it is uniquely determined by the condition
that, for all $k = 1, \ldots, N$, 
$\gamma_{k}$ and $\gamma_{k+1}$ cannot be joined in a smooth 
curve (we set $\gamma_{N+1} = \gamma_{1}$). The elements 
of this minimal set $\maV_D$ are called \emph{the vertices} of $D$. 
\end{remark}

Given two points $A, B \in \RR^{d}$ ($d$ arbitrary here),
we shall let $[AB]$ denote the \emph{straight line segment}
joining $A$ and $B$. Thus $[AB]$ is a compact subset of $\RR^{d}$.
We shall use this for $d = 2$ to say that $D$ is a 
\emph{straight polygonal domain} if, for all $k$,
$\gamma_{k}([0, 1]) = [q_{k}q_{k+1}]$. That is, all edges of 
$D$ are straight line segments.

\subsection{A distinguished sequence approaching a straight polygon}
\label{ssec.Simon}
Let $\Omega_{\infty}$ be a straight polygonal domain in $\RR^{2}$.
We shall construct in this section an explicit example 
of a sequence $(\Omega_{n})_{n \in \NN}$ of smooth domains such that 
the family $\Omega_{n}$, $n \in \indexSet := \oN$, satisfies the 
assumptions needed so that the results of Section 
\ref{ssec.wellp} apply to it. Recall from the introduction that, in addition to 
the domains $\Omega_{n}$, we need to construct also sets 
$\maV_{n} = \{p_{1n}, p_{2n}, \ldots, p_{Nn}\}\in 
\RR^{2} \smallsetminus \Omega_{n}$ satisfying 
certain additional assumptions (in this section, the sets $\maV_{n}$
will be used to define the domains $\Omega_{n}$, for $n \in \NN$). 
We proceed in order.

\subsubsection{The sets $\Omega_{\infty}$ and $\maV_{\infty}$}
The domain $\Omega_{\infty}$ is our given \emph{straight 
polygonal domain} in $\RR^2$ and $\maV_{\infty}$ is its set of 
vertices
\begin{equation}\label{eq.def.V0}
   \maV_{\infty} \ede \maV_{\Omega_{\infty}} \seq 
   \{ p_{1}, p_{2}, \ldots , p_{N} \} 
   \subset \pa \Omega_{\infty} 
   \ede \overline{\Omega}_{\infty} \smallsetminus \Omega_{\infty}\,.
\end{equation}
Its edges are $[p_{j}p_{j+1}]$, where we recall that we agreed that 
$p_{N+1} = p_{1}$. (See the previous section for the other notation.)
Notice also that we write $p_{j}$ instead of $p_{j\infty}$,
for the simplicity of the notation.
\smallskip

Let us now turn to \emph{the construction of the other sets 
$\Omega_{n}$ and $\maV_{n}$.} To simplify the notation,
we shall write $\ball{x}{a}$ for the ball of radius $a$ and 
center $x$ in $\RR^{2}$ (instead of $B_{a}^{\RR^2}(x)$, which
is the typical notation in this paper, see Equation 
\eqref{eq.def.ball.r}). Also, 
in the following, we will generally assume $j \in \{1, \ldots, N\}$.
Thus, when we will say ``for each $j$'', we will mean 
``for each $j \in \{1, \ldots, N\}$''. Here the order is 
the ``circular order,'' meaning that, 
occasionally, we will also use objects associated to $N+1$.
In that case, they are the same as the objects associated
to $1$. For instance, above we have used $p_{N+1} = p_{1}$.

\subsubsection{The sets $\Omega_{1}$ and $\maV_{1}$}
Let $R_{0}$ be the \emph{minimum} distance between any two
\emph{non-intersecting} (closed) edges of $\Omega_{\infty}$ and 
$\rho \in (0, R_{0}/2)$. This ensures that the distance between any two 
edges of $\partial\Omega_{\infty}$ with \emph{no common vertices} 
is larger than $2\rho$. Let $\mathbf{b_j}$ be the
\emph{exterior} bisectrix of the angle of 
$\Omega_{\infty}$ at $p_{j}$. Thus $\mathbf{b_j}$ is a 
half-line starting at $p_{j}$. Our construction proceeds 
as follows.\smallskip

$\bullet$\ Let $p_{j1}$ be the \emph{unique} point 
on the bisectrix $\mathbf{b_j}$ at distance
$\dist_{\RR^2}(p_{j},p_{j1}) = \rho/2$ to $p_{j}$. Then
\begin{equation}\label{eq.def.Simon.V}
   \maV_{1} \ede \{p_{11}, p_{21}, \ldots, p_{N1}\}\,.
\end{equation}

To define the set $\Omega_{1}$, we first define its 
boundary, which will be a smooth curve obtained by 
piecing together several other (smaller) curves. Some 
of these curves will be
straight segments, which we define first.

$\bullet$\ \emph{The segments $\ell_{j1}$ 
defining the straight part of $\pa \Omega_{1}$
are constructed as follows.}
For $\rho' \in (0, \rho/4]$ small enough, we have that,
for each $j$, the circle 
$\partial \ball{p_{j1}}{\rho/2}$ will contain 
\emph{exactly two points} $q_{j1}$ and $q'_{j1}$ 
that are at distance $\rho'$ to $\Omega_{\infty}$.
In particular, $q_{j1}$ and $q'_{j1}$ are \emph{outside} 
$\Omega_{\infty}$. We choose $q_{j1}$ to be on the same
side of the bisectrix $\mathbf{b_j}$ as $p_{j-1}$ (that is, 
in the same half-space whose boundary contains $\mathbf{b_j}$). 
By symmetry, $q'_{j1}$ and $p_{j+1}$ will also be on the 
same side of $\mathbf{b_j}$. Then 
$$\ell_{j1} \ede [q_{j1}'q_{(j+1)\,1}]\,,$$ 
the straight line segment with extremities $q_{j1}'$ and 
$q_{(j+1)\,1}$. Recall, in this context, that 
$q_{(N+1)\, 1}:= q_{11}$, $q_{(N+1)\, 1}:= q_{11}'$,
and $\ell_{(N+1)\,1} = \ell_{11}$,
as explained above. We also choose $\rho'$ small enough
so that $\ell_{j1} := [q_{j1}'q_{(j+1)\,1}]$ is 
parallel to the edge $[p_{j}p_{j+1}]$ of $\Omega_{\infty}$.
(More precisely, we choose $\rho'$ small enough so that 
the distance from $q_{j1}$ to $\Omega_{\infty}$ is the foot 
of the perpendicular from $q_{j1}$ to the line containing
$[p_{j-1}p_{j}]$. This is possible because the balls 
$\ball{p_{j1}}{\rho/2}$ are disjoint, by the definition 
of $\rho$, because $\ball{p_{j1}}{\rho/2}\subset \ball{p_j}{\rho}$,
and because all points of the ball $\ball{p_{j1}}{\rho/2}$ are
at distance at least $\rho$ to all the edges of $\Omega_{\infty}$,
except the ones containing $p_{j}$.)

$\bullet$\ \emph{We now define the curves $c_{j1}$
forming the non-straight part of the boundary.} These curves 
$c_{j1}$ smoothly join the segments $l_{j1}$. More precisely, 
let $$c_{j1} \subset \ball{p_{j1}}{\rho/2}$$ 
be a smooth curve joining $q_{j1}$ and $q'_{j1}$,  
separating $p_j$ and $p_{j1}$ inside $\ball{p_{j1}}{\rho/2}$, 
and such that $\ell_{1\,j-1}\cup c_{j1} \cup\ell_{j1}$ is smooth 
without self-intersections (recall that $\ell_{(N+1)\,1} = \ell_{11}$).

$\bullet$\ Finally, we define the domain $\Omega_{1}$ to be 
the unique bounded domain with 
\begin{equation}\label{eq.def.Simon.Omega}
   \pa \Omega_{1} \seq \cup_{j=1}^{N} 
   \big (\ell_{j1} \cup c_{j1} \big)\,.
\end{equation}
Then $\Omega_{1}$ is a smooth, bounded domain. (This is
because its boundary is the union of all the sets 
$\ell_{j1}$ and $c_{j1}$, which were defined such 
that their union is locally smooth, and hence smooth.)

\subsubsection{The sets $\Omega_{n}$ and $\maV_{n}$, $n \ge 2$}
These sets are defined in the same way $\maV_{1}$ and 
$\Omega_{1}$ were defined, but using $\rho/n$ and $\rho'/n$
instead of $\rho$ and $\rho'$ and, for each $j$, choosing the curves 
$c_{jn}$ to be \emph{homothetic} to $c_{j1}$.

\subsubsection{Remarks}
Let us make a few simple remarks. The first one details 
the construction of the sets $\Omega_{n}$ and $\maV_{n}$
for $n \ge 2$. We shall use homotheties.

\begin{notation}\label{not.homothety}
   For $p\in\mathbb{R}^2$ and $c>0$, let 
   $h_{p,c}:\mathbb{R}^2\to\mathbb{R}^2$ 
   denote the homothety of center $p$ and ratio $c$, that is 
   $$h_{p, c}(q) \ede p + c(q-p)\,.$$
\end{notation}

It follows from definitions that $h_{p,c}$ 
is an isometry on $\left(\RR^2\setminus\{p\},
\frac{dx^2}{|x-p|^2}\right)$.

\begin{remark}\label{rem.details.nGE2}
   Let $n\geq2$ be fixed. Our construction is thus 
   such that $\Omega_{n}\subset\mathbb{R}^2$ is 
   a domain containing $\overline\Omega_{\infty}$ with boundary
   $\partial\Omega_{n}$ an embedded smooth curve consisting 
   of line segments $\ell_{1n},\dots,\ell_{Nn}$ and curves 
   $c_{1n},\dots c_{Nn}$:
   \begin{equation}\label{eq.def.Simon.Omegan}
      \pa \Omega_{n} \seq \cup_{j=1}^{N} 
      \big (\ell_{jn} \cup c_{jn} \big)
   \end{equation}
   such that
   \begin{itemize}
      \item $\ell_{jn} = [q'_{jn}, q'_{(j+1)\,n}]$,
      where $q_{jn}' = h_{p_j,1/n}(q'_{j1})$ and 
      $q_{jn} = h_{p_{j},1/n}(q_{j1})$ and 

      \item $c_{jn}=h_{p_j,1/n}(c_{j1})$.
   \end{itemize}
   In particular $\ell_{jn}$ is parallel 
   to the edge $[p_jp_{j+1}]$ of $\Omega_{\infty}$ and 
   at distance $\rho'/(4n)$ to it.
   Finally, we have $p_{jn} := h_{p_j,1/n}(p_{j1})$
   and $\maV_{n} :=\{p_{1n}, p_{2n}, \dots,p_{Nn} \}$.
\end{remark}

The following remark summarizes some additional properties of
$\Omega_{n}$ and $\maV_{n}$.

\begin{remark}\label{rem.properties}
Let $\mathcal{V}_{n}=\{p_{1n},\dots,p_{Nn}\}$, as above. 
   \begin{enumerate}[(i)]
      \item 
      $\lim_{n \to \infty} p_{nj} = p_j$ for each $j\in\{1,\dots,N\}$,
      \item $\maV_{n} \cap \Omega_{n} = \emptyset$ for each $n\in\NN^*$,
      \item For each fixed $n$, the distance between any two points 
      of $\maV_{n}$ is $\ge R_{0} - \rho \ge R_{0}/2$,
      so Assumption \ref{assumpt.intro1}\eqref{item.intro.as3} 
      is satisfied with $R = R_{0}/2$.
   \end{enumerate}
\end{remark}

Finally, the following remark explains the need for $\rho'$
(the smallest among the many parameters used).

\begin{remark}\label{rem.alpha} 
   Let $\alpha_{MAX}$ and $\alpha_{min}$ be the maximum, 
   respectively, minimum angles of $\Omega_{\infty}$. Of course, 
   $\alpha_{min}, \alpha_{MAX} \in (0, 2\pi)$. For $\rho$ 
   fixed (but small), we have $\rho' \to 0$ as $\alpha_{min} \to 0$ 
   or $\alpha_{MAX} \to 2\pi.$ So we cannot choose 
   $\rho'$ to be some constant multiple of $\rho$.
\end{remark}

\subsection{Proof that the `distinguished' $U$ has finite width}
We use the notation and constructions introduced in the previous 
sections (see, for example, Assumptions \ref{assumpt.intro1} and 
\ref{assumpt.fw}). However, we stress that, in this section,
we are \textbf{not assuming} \ref{assumpt.intro1} and 
\ref{assumpt.fw}, but rather prove that they are satisfied by our
constructions. 

Let us recall first some of the needed constructions and notations
from Subsection \ref{ssec.setting}. First, we endow each 
$\Omega_{n}$, $n \in \indexSet$, with 
the metric $\hat g_{n} = r_{n}^{-2} (dx_1^2 + dx_2^2)$, where 
$dx_1^2 + dx_2^2$ is the usual (euclidean) metric on $\RR^2$
and $r_{n}(x) := \eta_{\RR^{2}}(\dist(x, \maV_{n}))$, with 
$\eta$ is our fixed function of Assumption~\ref{assumpt.intro1}.
Let $U$ be the \emph{disjoint union} of the domains $\Omega_{n}$, 
$n \in \indexSet$, that is
   $$U \ede \cup_{n \in \indexSet} \{n \} \times \Omega_{n} 
   \subset \indexSet \times \RR^2 \,,$$
as in Equation \eqref{eq.def.U}.
We endow $U$ with the metric $\hat g$ that on the component $\Omega_{n}$ 
coincides with $\hat g_{n}$. Thus, if $r(n,x)=r_{n}(x)$ for $n\in\NN$ and 
$x\in\Omega_{n}$, then 
\begin{equation}\label{eq.conf.metric}
   \hat g \seq r^{-2} (dx_1^2 + dx_2^2)\,.
\end{equation}
If the sequence $\Omega_{n}$ is as in the previous section
(Section \ref{ssec.Simon}), then we shall call the associated
$U$ the \emph{distinguished} $U$.

We will now show that the geodesic curvature of $\pa U$ and it 
derivatives of all orders are bounded. This is enough in view of 
Remark \ref{rem.2for2}. We choose $\rho = R/2$ and 
rescale such that $4\rho = R = 1$. (So $R_{0} \ge 2$.)
We shall need the following remark.

\begin{remark}\label{rem.2for2}
   Let $M$ be a Riemannian manifold of dimension $d=2$, 
   then $H$ is a curve. Let $s$ be a choice of arc-length coordinate on $H$, 
   let $\pa_s$ be the associated unit length tangent vector field 
   to $H$, and let $\nu$ the unit normal to $H$. Then $g(\pa_{s}, \nu) = 0$
   and $g(\nu,\nabla^H_{\pa_s}\pa_s)=0$.
   Therefore, the second fundamental form $\II^{H}$ of $H$ is given by
   \begin{equation*}
      \II^{H}(\pa_s,\pa_s) \seq 
      g(\nu,\nabla^M_{\pa_s}\pa_s-\nabla^H_{\pa_s}\pa_s)
      \seq g(\nu,\nabla^M_{\pa_s}\pa_s) \seq -
      g(\pa_s,\nabla^M_{\pa_s}\nu)\, =:\, \kappa(s),
   \end{equation*} 
   that is, $\II^{H}(\pa_s,\pa_s)$ coincides with the geodesic curvature 
   $\kappa(s)$ of $H$.
\end{remark}

\begin{lemma}\label{lemma.curvature}
   Let $\sigma$ be a smooth function on an open set $U\subset\RR^2$.
   The geodesic curvature $\mathring\kappa$ of a curve in $U$ in the 
   Euclidean metric $dx^2$ and its geodesic curvature $\kappa$ in the 
   metric $\sigma^2dx^2$ are related by
   \begin{equation} \label{curvature}
    \kappa=\frac{\mathring\kappa}\sigma-\frac{\langle\mathring\nabla\sigma,
    \mathring\nu\rangle}{\sigma^2}
   \end{equation}
   where $\mathring\nu$ is the unit (for the Euclidean metric) normal 
   vector 
   to the curve, $\mathring\nabla$ is the Levi-Civit\`a connection of 
   the Euclidean metric, and $\langle\,,\,\rangle$ is the Euclidean inner product.
\end{lemma}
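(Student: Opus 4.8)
The plan is to deduce formula \eqref{curvature} directly from two facts already in hand: the conformal transformation law for the second fundamental form (Lemma \ref{lemma.conformal2}) and the identification, valid in dimension two, of the second fundamental form with the geodesic curvature (Remark \ref{rem.2for2}). Write $g = dx^2$ for the Euclidean metric on $U$ and let $\mathring\pa_s$ denote the Euclidean unit tangent vector field to the curve $H$, so that $g(\mathring\pa_s, \mathring\pa_s) = 1$. By Remark \ref{rem.2for2} applied to $(U, g)$, the Euclidean geodesic curvature satisfies $\mathring\kappa = \II^H(\mathring\pa_s, \mathring\pa_s)$, where $\II^H$ is computed with respect to $\mathring\nu$.

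Next I would pass to the metric $\sigma^2 g$. The vector field $\pa_s := \sigma^{-1}\mathring\pa_s$ is the unit tangent to $H$ for $\sigma^2 g$, and $\sigma^{-1}\mathring\nu$ is the corresponding unit normal; hence Remark \ref{rem.2for2} applied to $(U, \sigma^2 g)$ gives $\kappa = \II^H_\sigma(\pa_s, \pa_s)$, where $\II^H_\sigma$ denotes the second fundamental form of $H$ in $(U, \sigma^2 g)$ with respect to $\sigma^{-1}\mathring\nu$ --- which is precisely the object appearing in Lemma \ref{lemma.conformal2}. Using that $\II^H_\sigma$ is a bilinear form on vectors tangent to $H$, we get $\kappa = \sigma^{-2}\,\II^H_\sigma(\mathring\pa_s, \mathring\pa_s)$.

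Finally I would substitute $X = Y = \mathring\pa_s$ into the formula of Lemma \ref{lemma.conformal2}, namely $\II^H_\sigma(X,Y) = \sigma\,\II^H(X,Y) - g(\mathring\nu, \mathring\nabla\sigma)\,g(X,Y)$ --- here the Levi-Civit\`a connection of $g$ is just the Euclidean one, so $\nabla^M\sigma = \mathring\nabla\sigma$ --- to obtain $\II^H_\sigma(\mathring\pa_s, \mathring\pa_s) = \sigma\mathring\kappa - \langle\mathring\nabla\sigma, \mathring\nu\rangle$. Dividing by $\sigma^2$ yields exactly \eqref{curvature}. There is no real obstacle here: the only points requiring a moment's care are the bookkeeping of which unit tangent and unit normal are used for each metric, so that the sign conventions of Remark \ref{rem.2for2} and of Lemma \ref{lemma.conformal2} are consistent (both geodesic curvatures are taken with respect to normals pointing the same way, $\mathring\nu$ and $\sigma^{-1}\mathring\nu$), together with the observation that the geodesic curvature depends only on the unit tangent vector field and not on the choice of arc-length parameter, so replacing the Euclidean notion of arc length by that of $\sigma^2 g$ causes no difficulty. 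Alternatively, the same identity can be verified by a direct computation in Euclidean coordinates, but the argument above is shorter given the material already developed.
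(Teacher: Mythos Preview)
Your proposal is correct and follows essentially the same approach as the paper: both combine Remark \ref{rem.2for2} (identifying the second fundamental form with the geodesic curvature in dimension two) with Lemma \ref{lemma.conformal2} (the conformal change of the second fundamental form), rescale the Euclidean unit tangent and normal by $\sigma^{-1}$, and use bilinearity to extract the factor $\sigma^{-2}$. Your write-up is slightly more explicit about the sign and parametrization bookkeeping, but the argument is the same.
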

   
\begin{proof}
 This formula is well-known and follows from Lemma \ref{lemma.conformal2} and Remark \ref{rem.2for2}. 
   Let $\mathring\II^H$ be the second fundamental form of $H$ for the metric $dx^2$ and and unit normal $\mathring\nu$. Then $\frac1\sigma\mathring\nu$ has norm $1$ for the metric $\sigma^2dx^2$, so we can consider the second fundamental form $\II^H_\sigma$ of $H$ for the metric $\sigma^2dx^2$ and unit normal $\frac1\sigma\mathring\nu$.
    
     Let $s$ be a choice of arc-length coordinate on $H$ for the Euclidean metric $dx^2$.
    We have  
    $$\mathring\kappa=\mathring\II^{H}(\pa_s,\pa_s),$$
    and, since $\frac1\sigma\pa_s$ has norm $1$ for the metric $\sigma^2dx^2$,
    $$\kappa=\II^H_\sigma\left(\frac1\sigma\pa_s,\frac1\sigma\pa_s\right)
    =\frac1\sigma\mathring\II^H(\pa_s,\pa_s)-\frac{\langle\mathring\nabla\sigma,\mathring\nu\rangle}{\sigma^2}
      =\frac{\mathring\kappa(s)}\sigma-\frac{\langle\mathring\nabla\sigma,
    \mathring\nu\rangle}{\sigma^2}.$$
   This completes the proof.
\end{proof}

\begin{lemma} \label{lines}
   Consider the metric $\tilde g=\frac{dx^2}{\eta(|x|)^2}$ on 
   $\RR^2\setminus\{0\}$. 
   Then, for any $k\in\NN$ there exists a constant $C_{k}$ such that 
   the curvature $\tilde\kappa$ with respect $\tilde g$ of any Euclidean straight 
   line satisfies 
   $\left|\frac{d^k\tilde\kappa}{d\tilde s^k}\right|\leq C_{k}$,
   where $\tilde s$ is the arc-length for the metric $\tilde g$.
\end{lemma}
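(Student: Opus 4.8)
The plan is to use Lemma \ref{lemma.curvature} to turn the geodesic curvature of a straight line in the metric $\tilde g$ into an explicit elementary function of arc length, and then to choose a parametrization of the line in which differentiation with respect to $\tilde s$ becomes harmless. Applying Lemma \ref{lemma.curvature} with $\sigma = 1/\eta(|x|)$ and with $\mathring\kappa \equiv 0$ (Euclidean straight lines are Euclidean geodesics) gives, along the line,
$$\tilde\kappa \seq -\frac{\langle \mathring\nabla\sigma, \mathring\nu\rangle}{\sigma^2} \seq \eta'(|x|)\,\Big\langle \tfrac{x}{|x|}, \mathring\nu \Big\rangle\,.$$
I would then parametrize the line by Euclidean arc length $\mathring s$ with $\mathring s = 0$ at the foot of the perpendicular from the origin, and let $b \ge 0$ be the Euclidean distance from the origin to the line, with $\mathring\nu$ oriented so that $\langle x, \mathring\nu\rangle = b$ along the line. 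Then $|x| = \rho(\mathring s) := \sqrt{b^2 + \mathring s^2}$ and $\langle x/|x|, \mathring\nu\rangle = b/\rho$, so $\tilde\kappa(\mathring s) = \eta'(\rho)\,b/\rho$. If $b = 0$ (a punctured line through the origin) then $\tilde\kappa \equiv 0$, and if $b \ge R/5$ then $\eta' \equiv 0$ along the line, so $\tilde\kappa \equiv 0$ again; hence we may assume $0 < b < R/5$.

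The key substitution is $\mathring s = b\sinh\xi$, whence $\rho = b\cosh\xi$. Since $\tilde g = \eta(|x|)^{-2} dx^2$ and $|x| = \rho$ along the line, we have $d\tilde s = \eta(\rho)^{-1}\, d\mathring s$, and $d\mathring s = b\cosh\xi\, d\xi = \rho\, d\xi$, so $d\tilde s/d\xi = \rho/\eta(\rho) = 1/\phi(\rho)$, where $\phi(t) := \eta(t)/t$; equivalently $d/d\tilde s = \phi(\rho)\, d/d\xi$. Therefore
$$\frac{d^k\tilde\kappa}{d\tilde s^k} \seq \Big(\phi(\rho)\,\frac{d}{d\xi}\Big)^{k}\Big[\eta'(b\cosh\xi)\,\frac{1}{\cosh\xi}\Big]\,, \qquad \rho = b\cosh\xi\,.$$
It remains to bound the right-hand side on $\RR$ uniformly in $b$, and the point is that each of the three building blocks $\xi \mapsto \phi(b\cosh\xi)$, $\xi \mapsto \eta'(b\cosh\xi)$, and $\xi \mapsto 1/\cosh\xi$ has, for every $j$, a $j$-th $\xi$-derivative bounded by a constant depending only on $j$ (not on $b$); expanding the operator $(\phi(\rho)\, d/d\xi)^{k}$ then gives a finite sum of products of such uniformly bounded functions.

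For $1/\cosh\xi$ the claim is classical. For $\psi(b\cosh\xi)$ with $\psi \in \{\phi, \eta'\}$, the Fa\`a di Bruno formula writes $\frac{d^j}{d\xi^j}\psi(b\cosh\xi)$ as a finite sum of terms $\psi^{(m)}(\rho)$ times a product of $m$ factors, each of which is $\pm b\sinh\xi$ or $b\cosh\xi$ and hence has absolute value $\le b\cosh\xi = \rho$; such a term is thus controlled by $|\rho^{m}\psi^{(m)}(\rho)|$. One checks directly that $t \mapsto t^{m}\psi^{(m)}(t)$ is bounded on $[0,\infty)$ for $\psi = \phi$ and $\psi = \eta'$, using the explicit shape of $\eta$ from Assumption \ref{assumpt.intro1}: both $\phi$ and $\eta'$ are identically $1$ on $[0, R/8]$ (so all their derivatives vanish there), on $[R/5,\infty)$ one has $\phi(t) = (R/6)/t$ and $\eta'(t) = 0$, and on the compact interval $[R/8, R/5]$ everything is smooth. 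This gives $|d^k\tilde\kappa/d\tilde s^k| \le C_k$. I expect the only genuine difficulty to lie in exactly this last step — the uniform-in-$b$ estimates on the $\xi$-derivatives of $\psi(b\cosh\xi)$ — which is elementary but needs the Fa\`a di Bruno expansion together with the two observations $|b\sinh\xi| \le \rho$ and ``$t^{m}\psi^{(m)}(t)$ is bounded''; everything preceding it is a direct computation.
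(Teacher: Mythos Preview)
Your argument is correct and complete. The conformal-curvature computation, the hyperbolic substitution $\mathring s = b\sinh\xi$ (so that $\rho = b\cosh\xi$ and $d/d\tilde s = \phi(\rho)\,d/d\xi$), and the Fa\`a di Bruno estimate using $|b\sinh\xi|\le\rho$ together with the boundedness of $t\mapsto t^m\psi^{(m)}(t)$ for $\psi\in\{\phi,\eta'\}$ all check out.

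The paper, however, takes a quite different route: it argues by contradiction and compactness. Assuming a sequence of lines $D_m$ and points $x_m$ with $|d^k\tilde\kappa_{D_m}/d\tilde s^k(x_m)|\to\infty$, one uses rotational invariance to make the $D_m$ parallel, observes that $\tilde\kappa$ vanishes outside $\ball{0}{1/5}$ so the $x_m$ stay in a compact set, and passes to a subsequential limit $x_\infty$. If $x_\infty\neq 0$, smooth convergence of the lines gives a finite limit for the derivative, a contradiction. If $x_\infty=0$, one exploits that near $0$ the metric is exactly $|x|^{-2}dx^2$, so the homothety $h_{0,1/(16|x_m|)}$ is a local isometry of $\tilde g$; rescaling pushes $x_m$ back to the sphere $|x|=1/16$ and the previous argument applies. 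Your substitution is in effect the analytic incarnation of this same scale-invariance (the variable $\xi$ is essentially the $\tilde g$--arclength in the cylindrical end), but you turn it into a direct estimate rather than a soft compactness argument. The payoff of your approach is that it is constructive and yields, in principle, explicit constants $C_k$; the paper's argument is shorter to write and more obviously robust, requiring no bookkeeping beyond ``smooth convergence implies convergence of derivatives.''
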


\begin{proof}
If $D$ is an oriented Euclidean straight line, possibly passing through $0$, 
then let $\tilde\kappa_D(x)$ be its curvature in the metric $\tilde g$ at a 
point $x\in D\setminus\{0\}$. Outside the Euclidean ball $\ball{0}{1/5}$ we have 
$\tilde g=\frac16dx^2$, so $\tilde\kappa_D(x)=0$ if $|x|\geq1/5$.

Let $k\in\NN$. We proceed by contradiction. Assume that there exists a sequence 
of Euclidean straight lines $(D_m)_{m\in\NN}$ and a sequence of points 
$(x_m)_{m\in\NN}$ with $x_m\in D_m\setminus\{0\}$ for all $m\in\NN$ and 
$\left|\frac{d^k\tilde\kappa_m}{d\tilde s^k}(x_m)\right|\to+\infty$ as 
$m\to+\infty$, where $\tilde\kappa_m:=\tilde\kappa_{D_m}$. Since rotations 
around $0$ are isometries of the metric $\tilde g$, we may assume that the 
Euclidean straight lines $D_m$, $m\in\NN$, are parallel.
 
Since $\frac{d^k\tilde\kappa_m}{ds^k}(x)=0$ if $|x|>1/5$, for $m$ large enough 
we have $|x_m|\leq1/5$. Hence, up to extracting a subsequence, we may assume 
that $x_m$ converges in the Euclidean distance to a limit point $x_{\infty}$ 
as $m\to+\infty$, because $\cup_{n \in \oN} \Omega_{n}$ is bounded,
see Assumption \ref{assumpt.fw}.
 
Assume that $x_\infty\neq0$. Then $x_m$ also converges to $x_\infty$ in the 
metric $\tilde g$ as $m\to+\infty$, and also $D_m$ converges smoothly on 
compact sets as $m\to+\infty$ to the Euclidean straight line $D_\infty$ passing 
through $x_\infty$ and parallel to the $D_m$, $m\in\NN$. Then, for suitable 
choices of orientations of the straight lines, 
$\frac{d^k\tilde\kappa_m}{d\tilde s^k}(x_m)\to
\frac{d^k\tilde\kappa_{D_\infty}}{d\tilde s^k}(x_\infty)$ as $m\to+\infty$, 
contradicting the hypothesis.
 
Consequently, $x_\infty=0$. In particular, for $m\in\NN$ large enough, 
$|x_m|<1/16$. Then, the homothety $h_m:=h_{0,1/(16|x_m|)}$ (See Notation 
\ref{not.homothety})
induces an isometry 
between $(B^*(0,2|x_m|),\tilde g)$ and $(B^*(0,1/8),\tilde g)$ (where $B^*$ 
is the ball minus the center), so $$\frac{d^k\tilde\kappa_m}{d\tilde s^k}(x_m)
=\frac{d^k\tilde\kappa_{h_m(D_m)}}{d\tilde s^k}(h_m(x_m)).$$ On the other hand, 
$|h_m(x_m)|=1/16$, so, up to extracting a subsequence, we may assume that 
$h_m(x_m)$ converges in the Euclidean distance to a limit point $y_{\infty}$ 
as $m\to+\infty$, with $|y_\infty|=1/16$. The convergence also holds in the 
metric $\tilde g$, and also $h_m(D_m)$ converges smoothly on compact sets as 
$m\to+\infty$ to the Euclidean straight line $D'_\infty$ passing through 
$y_\infty$ and parallel to the $D_m$, $m\in\NN$. Then, for suitable choices 
of orientations of the straight lines, $$\frac{d^k\tilde\kappa_m}{d\tilde s^k}(x_m)
=\frac{d^k\tilde\kappa_{h_m(D_m)}}{d\tilde s^k}(h_m(x_m))\to
\frac{d^k\tilde\kappa_{D'_\infty}}{d\tilde s^k}(y_\infty)$$ 
as $m\to+\infty$, contradicting the hypothesis.
\end{proof}

\begin{lemma} \label{lemma_curvature_boundary}
   For each $k\in\mathbb{N}$, $\frac{d^k\kappa}{ds^k}$ is 
   bounded on $\partial U$, where $s$ denotes arc-length 
   with respect to the metric $\hat g$ and $\kappa$ is the curvature of 
   $\pa U$.
\end{lemma}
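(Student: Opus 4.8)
The plan is to decompose $\pa U$ into the pieces coming from the local construction near each vertex and the pieces far from all vertices, and to bound the derivatives of the geodesic curvature $\kappa$ separately on each type of piece, \emph{uniformly in $n \in \indexSet$}. Recall that $\pa U = \cup_{n} \{n\} \times \pa \Omega_{n}$, and that $\pa \Omega_{n}$ is built from the segments $\ell_{jn}$ (which, after the homothety $h_{p_j, 1/n}$, are parallel to the Euclidean edges $[p_j p_{j+1}]$ and lie at Euclidean distance $\rho'/(4n)$ from them) and the curves $c_{jn} = h_{p_j, 1/n}(c_{j1})$ contained in the balls $\ball{p_{jn}}{\rho/(2n)}$. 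The key structural fact is that the metric $\hat g = r^{-2} dx^2$ equals $\eta(|x - p_{jn}|)^{-2} dx^2$ near $p_{jn}$ and, crucially, the homotheties $h_{p_{jn}, c}$ are isometries of $\bigl(\RR^2 \setminus \{p_{jn}\}, |x - p_{jn}|^{-2} dx^2\bigr)$ (the remark following Notation \ref{not.homothety}), and on the region where $\eta(t) = t$ the conformal factor is exactly this distance function. Hence the geometry near the vertex is self-similar under the rescaling that sends $\Omega_n$-data to $\Omega_1$-data.

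First I would treat the straight segments $\ell_{jn}$. By Lemma \ref{lines}, for the model metric $\tilde g = \eta(|x|)^{-2} dx^2$ on $\RR^2 \setminus \{0\}$, every Euclidean straight line has geodesic curvature $\tilde\kappa$ whose $\tilde g$-arc-length derivatives of all orders are bounded by constants $C_k$ independent of the line. Translating the vertex $p_{jn}$ to the origin, each $\ell_{jn}$ lies on a Euclidean straight line, and $\hat g$ restricted to a neighborhood of $\ell_{jn}$ not containing any \emph{other} vertex agrees with the translate of $\tilde g$ (because the balls $\ball{p_{in}}{R/5}$ are disjoint, by Assumption \ref{assumpt.intro1}\eqref{item.intro.as3}, and $r$ is constant outside their union). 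Where $\ell_{jn}$ passes through the constant-$r$ region, the curvature is zero. Thus Lemma \ref{lines} gives the desired uniform bound on $\frac{d^k \kappa}{ds^k}$ along every $\ell_{jn}$.

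Next I would treat the curvilinear pieces $c_{jn}$. Since $c_{jn} = h_{p_j, 1/n}(c_{j1})$ and, after translating $p_j$ to the origin, $h_{p_j, 1/n}$ is an isometry of $(\RR^2 \setminus \{0\}, |x|^{-2} dx^2)$ — and $c_{j1} \subset \ball{p_{j1}}{\rho/2} \subset \ball{p_j}{\rho}$ sits in the region where $\eta(|x|) = |x|$, as do its images under these contracting homotheties — the Riemannian curve $(c_{jn}, \hat g)$ is \emph{isometric} to $(c_{j1}, \hat g)$ for every $n \ge 2$. (For $n = 1$ there is nothing to prove; it is one fixed smooth compact curve.) Therefore $\frac{d^k \kappa}{ds^k}$ along $c_{jn}$ is literally the same bounded function of $\tilde g$-arc-length for all $n$, and since there are only $N$ values of $j$ and the curves $c_{j1}$ are finitely many smooth compact arcs, we get a single bound depending only on $k$, $N$, and the fixed curves $c_{j1}$. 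Alternatively, one may invoke Lemma \ref{lemma.curvature}: with $\sigma = \eta(|x|)^{-1}$ the conformal factor, the derivatives of $\kappa$ are expressed through the Euclidean curvature $\mathring\kappa$ of $c_{j1}$, the Euclidean normal, and derivatives of $\sigma$, all of which are bounded on the compact annular region $\{1/16 \le |x| \le 1/5\}$ away from the origin where the $c_{j1}$ live — and the scaling invariance propagates this to all $n$.

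The remaining point is to check that the \emph{junctions} between consecutive pieces ($\ell_{(j-1)n} \cup c_{jn} \cup \ell_{jn}$) are smooth with uniformly controlled curvature derivatives: this is built into the construction (the curves $c_{j1}$ were chosen in Subsection \ref{ssec.Simon} precisely so that $\ell_{1\,j-1} \cup c_{j1} \cup \ell_{j1}$ is smooth without self-intersection), and the homothety invariance transports both the smoothness and the curvature bounds to level $n$. Since the whole of $\pa U$ is covered by the (finitely many per $n$, uniformly structured) pieces $\ell_{jn}$ and $c_{jn}$, combining the two cases gives that $\frac{d^k \kappa}{ds^k}$ is bounded on $\pa U$ for every $k$. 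The main obstacle is making the ``uniform in $n$'' claim airtight: one must verify carefully that the conformal factor $r$ really is the pure distance function $|x - p_{jn}|$ throughout the relevant neighborhoods (guaranteed by $\eta(t) = t$ for $t \le R/8$ together with the fact that, after rescaling $4\rho = R = 1$, all the constructed curves and segments lie well inside radius $R/8$ of their vertex), so that the homotheties act as exact isometries and the self-similarity argument applies without error terms.
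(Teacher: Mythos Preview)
Your strategy is exactly the paper's: split $\partial\Omega_n$ into the straight pieces $\ell_{jn}$ (handled by Lemma~\ref{lines}) and the curved pieces $c_{jn}$ (handled by scaling self-similarity), and take the maximum over the finitely many $j$. The treatment of the segments is right in substance; just note that a given $\ell_{jn}$ approaches \emph{two} vertices $p_{jn}$ and $p_{(j+1)n}$, so the clean formulation is the paper's local one---every point of $(\RR^2\smallsetminus\maV_n,\hat g_n)$ has a neighborhood isometric to an open set of $(\RR^2\smallsetminus\{0\},\tilde g)$---rather than a single global translation.

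There is, however, a genuine slip in your treatment of the $c_{jn}$. You invoke that $h_{p_j,1/n}$ is a self-isometry of $|x-p_j|^{-2}\,dx^2$, and check that $c_{j1}$ sits where $\eta(|x-p_j|)=|x-p_j|$. But the conformal factor of $\hat g_n$ is $r_n(x)=\eta(|x-p_{jn}|)$, centered at $p_{jn}\in\maV_n$, \emph{not} at $p_j\in\maV_\infty$; so $\hat g_n=|x-p_{jn}|^{-2}\,dx^2$ near $c_{jn}$ and $\hat g_1=|x-p_{j1}|^{-2}\,dx^2$ near $c_{j1}$, and neither equals $|x-p_j|^{-2}\,dx^2$. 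What you actually need (and what the paper uses) is that $h_{p_j,1/n}$ maps $p_{j1}$ to $p_{jn}$ and hence pulls back $|x-p_{jn}|^{-2}\,dx^2$ to $|x-p_{j1}|^{-2}\,dx^2$; this is an easy direct check, but it is a different statement from the one you wrote. Once that is corrected, your argument and the paper's coincide.
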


\begin{proof}
The curvature of $\partial\Omega_{\infty}\setminus \maV_{\infty}$ for 
the metric $\hat g_{\infty}$ vanishes (indeed, around any point of one 
of these edges, the Euclidean reflexion across the edge is a local 
isometry of $\hat g_{\infty}$; alternatively, this also follows from 
Lemma \eqref{lemma.curvature}). Hence, it 
suffices to prove that, for each $k \in \NN$, there exists a 
constant $A_{k}>0$ such that 
$\left|\frac{d^k\kappa_{n}}{ds^k}\right|\leq A_{k}$ on $\partial\Omega_{n}$ 
for any $n\in\mathbb{N}^*$. Fix $j\in\{1,\dots,N\}$.

If $x\in \ball{p_{nj}}{\rho/(2n)}$, then $$\min_{p\in\mathcal{V}_{n}}|x-p|
=|x-p_{nj}|<\frac18,$$ so $r_{n}(x)=|x-p_{nj}|$. Consquently, the map 
$h_{p_j,n}$ induces an isometry between $(B^*(p_{nj},\rho/(2n)),\hat g_{n})$ 
and $(B^*(p_{j1},\rho/2), \hat g_1)$ (where $B^*$ is the ball minus the center). 
Since $c_{nj}\subset B^*(p_{nj},\rho/(2n))$ and $h_{p_j,n}(c_{nj})=c_{j1}$, 
we deduce that $\left|\frac{d^k\kappa}{ds^k}\right|$ is bounded 
on $\bigcup_{n=1}^{+\infty} \{n\}\times c_{nj}$ by 
$\max_{c_{j1}}\left|\frac{d^k\kappa_1}{ds^k}\right|$.

Also, since the balls $\ball{p_{1n}}{1/5},\dots,\ball{p_{Nn}}{1/5}$ are 
pairwise disjoint, each point of $(\RR^2\setminus\mathcal{V}_{n},
\hat g_{n})$ has 
a neighborhood which is isometric to an open set of 
$(\RR^2\setminus\{0\},\tilde g)$. Consequently, we 
deduce from Lemma \ref{lines} that 
$\left|\frac{d^k\kappa}{ds^k}\right|$ is bounded on 
$\bigcup_{n=1}^{+\infty} \{n\}\times\ell_{nj}$ by $C_{k}$.
Let 
$$A_{k}  \ede \max\left( \{C_{k}\}\cup\left\{\max_{c_{j1}}
\left| \frac{d^k\kappa_1}{ds^k} \right| 
\mid j\in\{1,\dots,N\}\right\} \right).$$
In conclusion, $\left|\frac{d^k\kappa}{ds^k}\right|$ is bounded on 
$\partial U$ by $A_{k}$.
\end{proof}

We are ready to state one of the main technical results.

\begin{proposition} \label{prop.q.Benoit}
   Let $\Omega_{\infty}$ be a straight polygonal domain
   and let $\Omega_{n}$ be as defined in \ref{ssec.Simon}.
   We let, as usual $U := \cup_{n \in \oN} \{n\} \times \Omega_{n}$ 
   and $r(n, x) := r_{n}(x)$. Then $(U, \pa U)$
   with the metric $\hat g = r^{-2} dx^2$ is a manifold of finite width 
   (that is, a manifold with boundary and 
   relative bounded geometry with $U \ni x \to \dist_{U}(x, \pa U)$
   bounded in $x$).
\end{proposition}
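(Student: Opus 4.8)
The plan is to verify the two defining conditions of "finite width" (Definition~\ref{def.finite.width}) for the distinguished $U$: (a) that $(U,\hat g)$ is a manifold with boundary and relative bounded geometry, and (b) that $x\mapsto\dist_U(x,\pa U)$ is bounded. For (a), the natural ambient boundaryless manifold is $\Mce:=\cup_{n\in\oN}\{n\}\times(\RR^2\smallsetminus\maV_n)$ with metric $\hat g=r^{-2}dx^2$, which has bounded geometry by Proposition~\ref{prop.Mce.bg} (the hypotheses of Assumption~\ref{assumpt.intro1} for $\Omega_n$, $\maV_n$ having been checked in Remark~\ref{rem.properties}, with $R=R_0/2$, rescaled to $R=1$). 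It then remains to show $\pa U$ is a bounded geometry hypersurface in $\Mce$ in the sense of Definition~\ref{def.hSurfaceBG}: that it is a closed subset, that all covariant derivatives $(\nabla^{\pa U})^k\II^{\pa U}$ are bounded, and that $\exp^\perp$ is a diffeomorphism onto its image on a uniform collar $\pa U\times(-\delta,\delta)$.

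The first step is to settle that $\pa U$ is closed in $\Mce$ and carries a global unit normal — immediate, since each $\Omega_n$ is a smooth bounded domain with $\maV_n\cap\overline{\Omega_n}=\emptyset$ (indeed $\overline{\Omega_n}\supset\overline{\Omega_\infty}$ but stays away from $\maV_n$), so $\pa\Omega_n$ is a compact embedded smooth curve sitting in $\RR^2\smallsetminus\maV_n$, and the components for distinct $n$ are disjoint. The second step is the curvature bound: in dimension $2$ the second fundamental form of a curve is just its geodesic curvature, and its covariant derivatives along the curve are the arc-length derivatives, so the required bound is exactly the content of Lemma~\ref{lemma_curvature_boundary} (via Remark~\ref{rem.2for2}), which gives $|d^k\kappa/ds^k|\le A_k$ on $\pa U$ uniformly in $n$. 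The third step — the collar — is where the real work lies: I must produce a $\delta>0$, independent of $n$, such that $\exp^\perp\colon\pa\Omega_n\times(-\delta,\delta)\to M_n$ is a diffeomorphism onto its image for every $n$. Here I would invoke Lemma~\ref{lemma.normalinjectivity}: the ambient sectional curvature of $(M_n,\hat g_n)$ is uniformly bounded (it is bounded on $\Mce$ by Proposition~\ref{prop.Mce.bg}), and $\|\II_{\pa\Omega_n}\|_{L^\infty}$ is uniformly bounded by Step~2, so it suffices to bound $\omega_{\pa\Omega_n}$ from below uniformly, i.e. to show there is a uniform lower bound on the length of a $\hat g_n$-geodesic that meets $\pa\Omega_n$ orthogonally at both endpoints. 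This is the main obstacle, and I expect to handle it by the same scaling/compactness device already used in the proof of Lemma~\ref{lines}: using the homotheties $h_{p_j,n}$, which are isometries of $\hat g_n$ near each vertex, the geometry of $\pa\Omega_n$ in a neighborhood of $p_{jn}$ is isometric to that of $\pa\Omega_1$ near $p_{j1}$, and away from all vertices $\hat g_n$ agrees with $\tfrac16 dx^2$ while $\pa\Omega_n$ consists of straight segments $\ell_{jn}$ parallel to the edges of $\Omega_\infty$; a failure of a uniform lower bound on $\omega_{\pa\Omega_n}$ would, after rescaling, produce a limiting configuration (a straight line, or the fixed curve $c_{j1}$, in the fixed model metric $\tilde g$) with a degenerate normal self-intersection, which is impossible since $\pa\Omega_1$ is a smooth embedded curve and straight lines in $\tilde g$ have no such chords. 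I would spell this argument out, splitting into the "near a vertex" case (reduce to $n=1$ by the homothety) and the "bounded distance from all vertices" case (reduce to the model $(\RR^2\smallsetminus\{0\},\tilde g)$ by Lemma~\ref{lines}'s isometry), and conclude a uniform $\delta$.

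Finally, for condition (b) I would argue that $\cup_{n\in\oN}(\Omega_n\cup\maV_n)$ is bounded in $\RR^2$ (Assumption~\ref{assumpt.fw}/by construction, since all $\Omega_n$ lie in a fixed ball around $\Omega_\infty$) and that each $\Omega_n$ is a thin tubular-type neighborhood: every point of $\Omega_n$ is within Euclidean distance $O(1)$ of $\pa\Omega_n$. Converting to the metric $\hat g_n=r_n^{-2}dx^2$: near a vertex $p_{jn}$ the metric is the (conical, hence cylindrical) $|x-p_{jn}|^{-2}dx^2$, in which the $\hat g_n$-distance from a point at Euclidean distance $t$ to the vertex to the boundary piece $c_{jn}$ (a curve separating the point from $p_{jn}$ inside $\ball{p_{jn}}{\rho/(2n)}$) is $O(1)$ because in the cylinder $S^1\times(0,\infty)$ radial distance is $\log$ of the ratio and the relevant ratio is bounded (again reduce to the fixed $n=1$ picture by $h_{p_j,n}$); away from the vertices $\hat g_n$ is a bounded multiple of the Euclidean metric on a set of bounded Euclidean diameter, so $\hat g_n$-distances to $\pa\Omega_n$ there are $O(1)$ too. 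Combining, $\sup_{x\in U}\dist_U(x,\pa U)<\infty$, which together with (a) gives that $(U,\hat g)$ has finite width, completing the proof.
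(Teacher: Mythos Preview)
Your overall strategy matches the paper's proof closely: embed in $\Mce$, use Lemma~\ref{lemma_curvature_boundary} for the second fundamental form bounds via Remark~\ref{rem.2for2}, reduce the tubular neighborhood condition to a uniform lower bound on $\omega_{\pa\Omega_n}$ via Lemma~\ref{lemma.normalinjectivity}, and handle that bound by a compactness argument combined with the homothety $h_{p_j,n}$ near each vertex. The paper executes the $\omega$-argument exactly this way, by contradiction: a putative sequence with $\omega_{\pa\Omega_{\varphi(n)}}\to 0$ yields endpoints $x_n\to x_\infty\in\pa\Omega_\infty$; if $x_\infty\notin\maV_\infty$ one gets two boundary arcs meeting at $x_\infty$, contradicting embeddedness; if $x_\infty=p_j$ one rescales by $h_{p_j,\varphi(n)}$ to the fixed curve $\Gamma:=c_{j1}\cup(\text{two half-lines})$ and uses $r_\Gamma>0$. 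Your sketch of this step is vaguer but on target. One point you omit: $\pa\Omega_\infty$ is \emph{non-compact} in $(M_\infty,\hat g_\infty)$ (the vertices lie at infinity), so Lemma~\ref{lemma.normalinjectivity} does not apply to it; the paper treats $r_{\pa\Omega_\infty}>0$ separately, observing that outside a compact set $\pa\Omega_\infty$ consists of parallel straight lines in each cylindrical end.

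There is, however, a genuine error in your finite-width argument near a vertex. You propose to bound the $\hat g_n$-distance from a point $y$ at Euclidean distance $t$ from $p_{jn}$ to the boundary piece $c_{jn}$ by a radial path in the cylinder, claiming ``the relevant ratio is bounded.'' It is not: $c_{jn}$ lies in $\ball{p_{jn}}{\rho/(2n)}$, so the radial $\hat g_n$-distance from $y$ to $c_{jn}$ is of order $\log\bigl(t/(\rho/(2n))\bigr)$, and since $t$ can be as large as $1/8$ this blows up like $\log n$. Rescaling by $h_{p_j,n}$ does not help: the image point lands at Euclidean distance $nt$ from $p_{j1}$, again unbounded. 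The paper's remedy is elegant and avoids radial paths altogether: from any $y\in\Omega_n\cap\ball{p_{jn}}{1/8}$ there is an arc of the Euclidean circle centred at $p_{jn}$ through $y$ that stays in $\overline\Omega_n$ and reaches $\pa\Omega_n$; since the full circle has $\hat g_n$-length $2\pi$ (the metric being $|x-p_{jn}|^{-2}dx^2$ there), this gives $\dist_U(y,\pa U)\le 2\pi$ uniformly. Replace your radial argument with this angular one and the proof goes through.
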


\begin{proof}
   Recall that $R = 1$. Let $(\Mce, \hat g)$ be as in 
   Proposition \ref{prop.Mce.bg}, in particular, 
   $\hat g = \frac{dx^2}{\eta(|x-\maV_{n}|)^2}$ on $\{n\} \times 
   \Omega_{n}$. We know from that proposition that $(\Mce, \hat g)$
   is a manifold with bounded geometry.

   The boundary of $U$ is $\pa U=\cup_{n = 0}^\infty \{n\} 
   \times \pa\Omega_{n}$. To prove that $U$ has bounded geometry, it is enough 
   to prove that $\pa U$ is a bounded geometry hypersurface in $\widehat M$. 
   Since $\pa U$ is one-dimensional, its second fundamental form $\II^{\pa U}$ 
   satisfies $\II^{\pa U}(\pa_s,\pa_s) = 
   - \hat g(\pa_s,\nabla_{\pa_s}\nu)=\kappa(s)$, 
   where $\nabla$ denotes the Riemannian connection of $\hat g$, $s$ arc-lenghth 
   on $\pa U$, $\nu$ the unit normal to $\pa U$ (with compatible orientation) 
   and $\kappa$ the geodesic curvature of $\pa U$ (see Remark \ref{rem.2for2}). 
   Consequently, the fact that 
   $\|(\nabla^{\pa U})^k \II^{\pa U} \|_{L^\infty} < \infty$ for all 
   $k \geq 0$ follows from Lemma \ref{lemma_curvature_boundary}. 
   This proves condition (iii) of 
   Definition \ref{def.hSurfaceBG}. (Condition (i) is obvious.)

   We claim that $r_{\pa U}>0$. We have 
   $r_{\pa U}=\inf_{n\in\NN\cup\{\infty\}}r_{\pa\Omega_{n}}$. 
   One has $r_{\pa\Omega_\infty}>0$ since, outside a compact region, 
   $\pa\Omega_\infty$ consists of parallel straight lines in each cylindrical 
   end $B_{1/8}(p_j)\setminus\{p_j\}$. Also, 
   by Lemma \ref{lemma.normalinjectivity}, there exists $C>0$ such that 
   $r_{\pa\Omega_{n}}\geq\min\left(C,\frac12\omega_{\pa\Omega_{n}}\right)$ 
   for all $n\in\NN$. So, it suffices to prove that 
   $\inf_{n\in\NN}\omega_{\pa\Omega_{n}}>0$.
   
   Assume that $\inf_{n\in\NN}\omega_{\pa\Omega_{n}}=0$. Then there exist a 
   function $\varphi:\NN\to\NN$, sequences $(x_{n})_{n\in\NN}$ and 
   $(t_{n})_{n\in\NN}$ such that $x_{n}\in\pa\Omega_{\varphi(n)}$, 
   $t_{n} \neq 0$, $x'_{n}:=\exp_{\varphi(n)}^\perp(x_{n},t_{n})\in\pa\Omega_{\varphi(n)}$,  
   $t_{n}\to0$ and $\varphi(n)\to\infty$ as $n\to\infty$, where 
   $\exp_{\varphi(n)}^\perp$ stands for $\exp^\perp$ in the metric 
   $r_{\varphi(n)}^{-2}dx^2$. We can assume that $|t_{n}|$ is minimal, 
   i.e., $\exp_{\varphi(n)}^\perp(x_{n},t)\notin\pa\Omega_{\varphi(n)}$ 
   if $|t|<|t_{n}|$. Extracting a subsequence, we can assume that $x_{n}$ 
   converges in the Euclidean distance to a point 
   $x_\infty\in\pa\Omega_\infty$, because $\cup_{n \in \oN} \Omega_{n}$ is bounded,
   see Assumption \ref{assumpt.fw}. Also, let $\gamma_{n}$ be the curve 
   parametrized by $t\mapsto\exp_{\varphi(n)}^\perp(x_{n},t)$ between 
   $x_{n}$ and $x'_{n}$. Since the functions $r_m$, $m\in\NN$, are uniformly 
   bounded, the Euclidean length of $\gamma_{n}$ also tends to $0$ as $n\to\infty$.
   
   Assume that $x_\infty\notin\maV_\infty$. The curve $\gamma_{n}$ is a geodesic 
   for the metric $r_{\varphi(n)}^{-2}dx^2$ meeting $\pa\Omega_{\varphi(n)}$ at 
   $x_{n}$ and $x'_{n}$, orthogonally at $x_{n}$. As $n\to\infty$, the Euclidean total 
   absolute curvature of $\gamma_{n}$ tends to $0$ (since its Euclidean length 
   tends to $0$ and the Euclidean curvature of geodesics for the metrics 
   $r_m^{-2}dx^2$, $m\in\NN$, is uniformly bounded in a neighborhood of $x_\infty$). 
   So, in the limit, we get two arcs of $\pa\Omega_\infty$ meeting at $x_\infty$. 
   This is impossible, since $\pa\Omega_\infty$ is embedded.
   
   Consequently, $x_\infty=p_j$ for some $j\in\{1,\dots,N\}$.  We consider the 
   curve $\Gamma$ defined as the union of $c_{j1}$ and the two half-lines starting 
   at the endpoints of $c_{j1}$ such that the union is smooth (in particular, these 
   half lines are parallel to the edges of $\pa\Omega_\infty$ that meet at $p_j$). 
   For the metric $\frac{dx^2}{|x-p_{j1}|^2}$, it has a positive normal injectivity 
   radius $r_\Gamma>0$ (its ends are asymptotic to half lines issued from $p_j$, 
   which are parallel geodesics).
   
   For $n$ large enough, one has 
   $\gamma_{n}\subset B_{1/16}(p_j)\subset B_{1/8}(p_{j\varphi(n)})$. But the 
   homothety $h_{p_j,\varphi(n)}$ maps isometrically 
   $\left(B_{1/8}(p_{j\varphi(n)}),r_{\varphi(n)}^{-2}dx^2\right)$ onto 
   $\left(B_{\varphi(n)/8}(p_{j1}),\frac{dx^2}{|x-p_{j1}|^2}\right)$ and 
   $\left(\pa\Omega_{\varphi(n)}\cap B_{1/8}(p_{j\varphi(n)}), 
   r_{\varphi(n)}^{-2}dx^2\right)$ onto $\left(\Gamma\cap B_{\varphi(n)/8}(p_{j1}),
   \frac{dx^2}{|x-p_{j1}|^2}\right)$. Consequently, $t_{n}\geq r_\Gamma>0$, 
   which contradicts the fact that $t_{n}\to0$ as $n\to\infty$. This concludes 
   the proof that $r_{\pa U}>0$.
   
   We now check that $U$ has finite width. By construction, all $\Omega_{n}$ 
   are included in $\overline\Omega_1$, which is compact. Let $n\in\NN$ and 
   $y\in\Omega_{n}$. Outside the balls $\ball{p_{nj}}{1/8}$, 
   $j\in\{1,\dots,N\}$, 
   we have $r_{n}\geq1/8$. So, if $y\notin \ball{p_{nj}}{1/8}$ for all 
   $j\in\{1,\dots,N\}$, then $\dist_{U}(y, \pa \Omega_{n})$ is less than a 
   constant that does not depend on $n$ and $y$. If $y\in \ball{p_{nj}}{1/8}$ 
   for some $j\in\{1,\dots,N\}$, then there is an arc of a Euclidean circle 
   centered at $p_{nj}$ that is contained in $\overline\Omega_{n}$ and that 
   joins $y$ to a point of $\pa\Omega_{n}$; since the length of this circle 
   for the metric $r_{n}^{-2}dx^2$ is $2\pi$, we get 
   $\dist_{U}(y, \pa \Omega_{n})\leq2\pi$. This concludes the proof.
\end{proof}

In particular, our domains $\Omega_{n}$ defined in this section 
and their ``limit'' $\Omega_{\infty}$ satisfy the conditions of 
Theorem \ref{theorem.main}, and hence we have the following result:

\begin{corollary}\label{cor.theorem.main}
   Let $\Omega_{n} \subset \RR^{2}$, $n \in \oN$, be the
   domains constructed in this section. Then there 
   exists $\delta_{U} > 0$ with the following property. 
   Let $|a| < \delta_{U}$, $a \le 1$, and $n \in \oN$. 
   Then there exists $C_H > 0$ such that, for any 
   $f_{n} \in L^2(\Omega_{n})$, $n \in \oN$,
   \begin{equation*}
      \|\Delta_{D, n}^{-1}f_{n}\|_{H^{1+a}} 
      \le C_H \|f_{n}\|_{L^2(\Omega_{n})}\,.
   \end{equation*}
\end{corollary}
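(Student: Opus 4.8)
The plan is to deduce this immediately from Theorem~\ref{theorem.main} by checking that the explicitly constructed family $(\Omega_{n}, \maV_{n})_{n \in \oN}$ falls under its hypotheses, namely Assumptions~\ref{assumpt.intro1} and~\ref{assumpt.fw}. Indeed, Theorem~\ref{theorem.main} already delivers precisely the claimed estimate $\|\Delta_{D, n}^{-1}f_{n}\|_{H^{1+a}} \le C_{H}\|f_{n}\|_{L^{2}(\Omega_{n})}$, with $\delta_{U}$ and $C_{H}$ independent of $n$, for \emph{any} family of planar domains satisfying those two assumptions; so the only thing to do is to verify the assumptions for the distinguished family of Section~\ref{ssec.Simon}.

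First I would fix the data of Assumption~\ref{assumpt.intro1}: take the index set $\indexSet = \oN$, let $N$ be the number of vertices of $\Omega_{\infty}$, set $R = R_{0}/2$ (where $R_{0}$ is the minimum distance between non-intersecting closed edges of $\Omega_{\infty}$), and keep the fixed cut-off function $\eta$ and the weights $r_{n}$, $\hat g_{n}$ exactly as introduced in the construction. Part (i) of Assumption~\ref{assumpt.intro1} --- that $\maV_{n} = \{p_{1n}, \ldots, p_{Nn}\} \subset \RR^{2} \smallsetminus \Omega_{n}$ is an $N$-element set --- holds by construction together with Remark~\ref{rem.properties}(ii); part (ii), the separation $|p_{in} - p_{jn}| \ge R$, is Remark~\ref{rem.properties}(iii); and parts (iii)--(iv) hold by the very definition of $\eta$, $r_{n}$, $\hat g_{n}$ used in Section~\ref{ssec.Simon}.

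Next I would check Assumption~\ref{assumpt.fw}. Each $\Omega_{n}$ is a bounded (indeed smooth) domain with $\Omega_{n} \cap \maV_{n} = \emptyset$ (Remark~\ref{rem.properties}(ii)), and $\cup_{n \in \oN}(\Omega_{n} \cup \maV_{n})$ is a bounded subset of $\RR^{2}$ since all $\Omega_{n}$ lie in the compact set $\overline{\Omega}_{1}$ and, by Remark~\ref{rem.properties}(i), the points $p_{jn}$ converge to $p_{j}$ and hence stay in a bounded set. The one genuine point --- that $(U, \hat g)$ with $U := \cup_{n \in \oN}\{n\} \times \Omega_{n}$ is a manifold with finite width --- is exactly the content of Proposition~\ref{prop.q.Benoit}, which has just been established. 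With both assumptions verified, Theorem~\ref{theorem.main} applies verbatim to this family and yields the corollary.

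Thus there is no remaining obstacle: all the difficulty has already been absorbed into Proposition~\ref{prop.q.Benoit} --- in particular the uniform lower bound on the normal injectivity radius $r_{\pa U}$ obtained there from Lemmas~\ref{lemma.normalinjectivity}, \ref{lines} and~\ref{lemma_curvature_boundary}. What is worth stressing is that $\delta_{U}$ and $C_{H}$ depend only on $\Omega_{\infty}$ (through $N$, $R_{0}$, the angles, the fixed curves $c_{j1}$ and the parameter $\rho'$) and not on $n$; this uniformity in $n$ is precisely what one gains by recasting the family as a single manifold with finite width and invoking Theorem~\ref{theorem.main}.
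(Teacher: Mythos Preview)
Your proposal is correct and follows exactly the paper's own approach: the paper simply states that the constructed domains satisfy the hypotheses of Theorem~\ref{theorem.main} (via the properties recorded in Remark~\ref{rem.properties} and the finite-width result of Proposition~\ref{prop.q.Benoit}) and then invokes that theorem. Your write-up is a more detailed verification of the same reduction, but the logic is identical.
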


As we have mentioned earlier, when the limit domain $\Omega_{\infty}$
is actually smooth (no corners), the Babu\v{s}ka-Kondratiev spaces 
we used become the usual Sobolev spaces. Even in that case, our 
results are new.

\def\cprime{$'$}


\begin{thebibliography}{10}

\bibitem{AmannFunctSp}
{\sc Amann, H.}
\newblock Function spaces on singular manifolds.
\newblock {\em Math. Nachr. 286}, 5-6 (2013), 436--475.

\bibitem{AGN_CR}
{\sc Ammann, B., Gro{\ss}e, N., and Nistor, V.}
\newblock Analysis and boundary value problems on singular domains: an approach
  via bounded geometry.
\newblock {\em C. R. Math. Acad. Sci. Paris 357}, 6 (2019), 487--493.

\bibitem{AGN1}
{\sc Ammann, B., Gro{\ss}e, N., and Nistor, V.}
\newblock Well-posedness of the {L}aplacian on manifolds with boundary and
  bounded geometry.
\newblock {\em Math. Nachr. 292}, 6 (2019), 1213--1237.

\bibitem{sobolev}
{\sc Ammann, B., Ionescu, A.~D., and Nistor, V.}
\newblock Sobolev spaces on {L}ie manifolds and regularity for polyhedral
  domains.
\newblock {\em Doc. Math. 11\/} (2006), 161--206 (electronic).

\bibitem{Olaf21}
{\sc Ann{\'e}, C., and Post, O.}
\newblock Wildly perturbed manifolds: norm resolvent and spectral convergence.
\newblock {\em J. Spectr. Theory 11}, 1 (2021), 229--279.

\bibitem{Boileau}
{\sc Boileau, M., and Cooper, D.}
\newblock On three-manifolds with bounded geometry.
\newblock {\em Bol. Soc. Mat. Mexicana (3) 10}, Special Issue (2004), 43--53.

\bibitem{Brendle1}
{\sc Brendle, S.}
\newblock Scalar curvature rigidity of convex polytopes.
\newblock https://arxiv.org/abs/2301.05087, 2023.

\bibitem{Brendle2}
{\sc Brendle, S., and Chow, T.-K.~A.}
\newblock Rigidity of convex polytopes under the dominant energy condition.
\newblock https://arxiv.org/abs/2304.04145, 2023.

\bibitem{CiarletKaddouri}
{\sc Ciarlet, P.~j., and Kaddouri, S.}
\newblock Multiscaled asymptotic expansions for the electric potential: surface
  charge densities and electric fields at rounded corners.
\newblock {\em Math. Models Methods Appl. Sci. 17}, 6 (2007), 845--876.

\bibitem{DLN_CR}
{\sc Daniel, B., Labrunie, S., and Nistor, V.}
\newblock The {P}oisson problems for a family of domains obtained by rounding
  up the conical points: uniform regularity and weight estimates.
\newblock in final preparation.

\bibitem{DLN2}
{\sc Daniel, B., Labrunie, S., and Nistor, V.}
\newblock Uniform estimates for a family of {P}oisson problems: `rounding up
  the conical points'.
\newblock In final preparation.

\bibitem{DaugeBook}
{\sc Dauge, M.}
\newblock {\em Elliptic boundary value problems on corner domains}, vol.~1341
  of {\em Lecture Notes in Mathematics}.
\newblock Springer-Verlag, Berlin, 1988.
\newblock Smoothness and asymptotics of solutions.

\bibitem{ElderingCR}
{\sc Eldering, J.}
\newblock Persistence of noncompact normally hyperbolic invariant manifolds in
  bounded geometry.
\newblock {\em C. R. Math. Acad. Sci. Paris 350}, 11-12 (2012), 617--620.

\bibitem{Funar}
{\sc Funar, L.}
\newblock On the topology of closed manifolds with quasi-constant sectional
  curvature.
\newblock {\em J. \'{E}c. polytech. Math. 6\/} (2019), 367--423.

\bibitem{FunarGrimaldi}
{\sc Funar, L., and Grimaldi, R.}
\newblock The ends of manifolds with bounded geometry, linear growth and finite
  filling area.
\newblock {\em Geom. Dedicata 104\/} (2004), 139--148.

\bibitem{GN17}
{\sc Gro{\ss}e, N., and Nistor, V.}
\newblock Uniform {S}hapiro-{L}opatinski {C}onditions and {B}oundary {V}alue
  {P}roblems on {M}anifolds with {B}ounded {G}eometry.
\newblock {\em Potential Anal. 53}, 2 (2020), 407--447.

\bibitem{GrosseSchneider}
{\sc Gro{\ss}e, N., and Schneider, C.}
\newblock Sobolev spaces on {R}iemannian manifolds with bounded geometry:
  general coordinates and traces.
\newblock {\em Math. Nachr. 286}, 16 (2013), 1586--1613.

\bibitem{HenrotBook}
{\sc Henrot, A., and Pierre, M.}
\newblock {\em Shape variation and optimization. {A} geometrical analysis},
  vol.~28 of {\em EMS Tracts Math.}
\newblock Z{\"u}rich: European Mathematical Society (EMS), 2018.

\bibitem{Hermann}
{\sc Hermann, R.}
\newblock Focal points of closed submanifolds of {R}iemannian spaces.
\newblock {\em Indag. Math. 25\/} (1963), 613--628.
\newblock Nederl. Akad. Wetensch. Proc. Ser. A {{\bf{6}}6}.

\bibitem{KaddouriThesis}
{\sc Kaddouri, S.}
\newblock Résolution du problème du potentiel électrostatique dans des
  domaines prismatiques et axisymétriques singuliers. etude asymptotique dans
  des domaines quasi-singuliers. (in french).
\newblock Ph.D. Thesis under the direction of P. Ciarlet, ENSTA, 2007.

\bibitem{Kodani}
{\sc Kodani, S.}
\newblock Convergence theorem for {R}iemannian manifolds with boundary.
\newblock {\em Compositio Math. 75}, 2 (1990), 171--192.

\bibitem{KohrNistor1}
{\sc Kohr, M., and Nistor, V.}
\newblock Sobolev spaces and $\nabla$-differential operators on manifolds {I}:
  basic properties and weighted spaces.
\newblock {\em Annals of Global Analysis and Geometry 61\/} (2022), 721--758.

\bibitem{Kondratiev67}
{\sc Kondrat{\cprime}ev, V.~A.}
\newblock Boundary value problems for elliptic equations in domains with
  conical or angular points.
\newblock {\em Transl. Moscow Math. Soc. 16\/} (1967), 227--313.

\bibitem{KordyukovLp1}
{\sc Kordyukov, Y.~A.}
\newblock {$L^p$}-theory of elliptic differential operators with bounded
  coefficients.
\newblock {\em Vestnik Moskov. Univ. Ser. I Mat. Mekh.}, 4 (1988), 98--100.

\bibitem{KordyukovLp2}
{\sc Kordyukov, Y.~A.}
\newblock {$L^p$}-theory of elliptic differential operators on manifolds of
  bounded geometry.
\newblock {\em Acta Appl. Math. 23}, 3 (1991), 223--260.

\bibitem{KMR}
{\sc Kozlov, V., Maz{\cprime}ya, V., and Rossmann, J.}
\newblock {\em Spectral problems associated with corner singularities of
  solutions to elliptic equations}, vol.~85 of {\em Mathematical Surveys and
  Monographs}.
\newblock American Mathematical Society, Providence, RI, 2001.

\bibitem{withLi}
{\sc Li, H., Mazzucato, A., and Nistor, V.}
\newblock Analysis of the finite element method for transmission/mixed boundary
  value problems on general polygonal domains.
\newblock {\em Electron. Trans. Numer. Anal. 37\/} (2010), 41--69.

\bibitem{Petersen}
{\sc Petersen, P.}
\newblock {\em Riemannian geometry}, 2nd~ed., vol.~171 of {\em Grad. Texts
  Math.}
\newblock New York, NY: Springer, 2006.

\bibitem{Olaf22}
{\sc Post, O., and Zimmer, S.}
\newblock Generalised norm resolvent convergence: comparison of different
  concepts.
\newblock {\em J. Spectr. Theory 12}, 4 (2022), 1459--1506.

\bibitem{Sakurai}
{\sc {Sakurai}, Y.}
\newblock {Rigidity of manifolds with boundary under a lower Ricci curvature
  bound.}
\newblock {\em {Osaka J. Math.} 54}, 1 (2017), 85--119.

\bibitem{Schick2001}
{\sc Schick, T.}
\newblock Manifolds with boundary and of bounded geometry.
\newblock {\em Math. Nachr. 223\/} (2001), 103--120.

\bibitem{ShubinAsterisque}
{\sc Shubin, M.~A.}
\newblock Spectral theory of elliptic operators on noncompact manifolds.
\newblock {\em Ast\'erisque 207\/} (1992), 5, 35--108.
\newblock M\'ethodes semi-classiques, Vol.\ 1 (Nantes, 1991).

\bibitem{TriebelBG}
{\sc Triebel, H.}
\newblock Characterizations of function spaces on a complete {R}iemannian
  manifold with bounded geometry.
\newblock {\em Math. Nachr. 130\/} (1987), 321--346.

\bibitem{Warner}
{\sc Warner, F.}
\newblock Extensions of the {R}auch comparison theorem to submanifolds.
\newblock {\em Trans. Amer. Math. Soc. 122\/} (1966), 341--356.

\end{thebibliography}
\end{document}